\newtheorem{mytheo}{Theorem}[section]
\newtheorem{cor}[mytheo]{Corollary}
\newtheorem{lem}[mytheo]{Lemma}
\newtheorem{algo}[mytheo]{Algorithm}
\newtheorem{rem}[mytheo]{Remark}
\newcommand{\abs}[1]{\left\vert#1\right\vert}
\newcommand{\eps}{\varepsilon}
\newcounter{remark}
\newcounter{problem}
\newenvironment{problem}{\refstepcounter{problem}\vspace{1.5ex}
	{\noindent\bf Problem
		\theproblem.}\hspace{0.3em}\parindent=0pt}{\vspace{1ex}}
\def\@upcite#1#2{\textsuperscript{[{#1\if@tempswa , #2\fi}]}}
\newenvironment{proof}{\vspace{1ex}
	{\it Proof. }\hspace{0.3em}}{\vspace{1ex}} \journal{ }
\begin{document}
	\begin{frontmatter}
		\title{Long term analysis of splitting methods
			for charged-particle dynamics}

		\author[1]{Xicui Li}
		\author[1]{Bin Wang\corref{cor1}}

		\address[1]{ School of Mathematics and Statistics, Xi'an Jiaotong University, 710049 Xi'an, China}

		\ead{lixicui@stu.xjtu.edu.cn,wangbinmaths@xjtu.edu.cn}\cortext[cor1]{Corresponding author.}

		
		\begin{abstract}
In this paper, we rigorously analyze the energy, momentum and magnetic moment behaviours of two splitting methods for solving charged-particle dynamics.  The near-conservations of these invariants   are given  {for the system under constant magnetic field or quadratic electric
potential}.  {By the approach named as backward error analysis, we derive} the modified  equations and modified invariants {of the splitting methods and based on which, the near-conservations over long times are proved.} Some numerical experiments are presented to demonstrate these long time behaviours.

		\end{abstract}
		
		\begin{keyword}
			Splitting method, charged particle dynamics,  backward error analysis, {near conservations}, long term analysis.
		\end{keyword}
	\end{frontmatter}
\vskip0.5cm \noindent Mathematics Subject Classification (2010):
{{65L05, 65P10, 78A35, 78M25}} \vskip0.5cm \pagestyle{myheadings}
\thispagestyle{plain}
	\section{Introduction}
	In this paper, we {formulate and analyze} two splitting methods for solving the charged-particle dynamics (CPD)
{which is described by}
	\begin{equation}\label{CPD}
		\begin{split}
			&\ddot{x}=v\times B(x)+E(x),\quad \ x(0)=x_0, \quad v(0)=v_0,\quad t\ge 0,
		\end{split}
	\end{equation}
{with the  position $x(t)\in\mathbb{R}^3$ and the velocity $v(t):=\dot{x}(t)\in\mathbb{R}^3$,}	where $B(x)=\nabla_x\times A(x)\in \mathbb{R}^3$ {denoted by} $(B_1(x),B_2(x),B_3(x))^{\intercal}$ is {a non-uniform} {magnetic field which is determined by the vector potential $A(x)$}, and $E(x) =-\nabla_x U(x)$ is {a given electric field generated by some scalar potential $U(x)$.}
	Charged particle dynamics plays a fundamental role in plasma physics {(\cite{Birdsall}) and there has been a lot of effective numerical methods in recent years}
	  for solving this system. Boris algorithm \cite{70Relativistic} has been
	widely used in {the simulation} of magnetized {plasma} due to its excellent properties \cite{13Why,17A}. Many other kinds of methods have also been {developed and analyzed}, {such as  splitting methods \cite{02Splitting,Ostermann15}, exponential integrators
  \cite{EI1},  asymptotically preserving methods \cite{AP1,AP2}, filtered Boris algorithms \cite{lubich20},  uniformly accurate methods \cite{UA1,UA2,UA3} and so on.}

{On the other hand, in the past over three decades,  structure-preserving
algorithms  have arisen in {various fields} of applied
sciences. From the scientific computing point of view, it is
conventional to require numerical methods  to preserve the
qualitative features of the true solution as much as possible when
applied to a differential equation.  For the system of CPD \eqref{CPD}, we pay attention to  three} important invariants. With the Euclidean norm $\abs{ \cdot}$, {the total energy is denoted} by {(\cite{Brugnano20,19Energy})}
	\begin{equation}\label{energy def}
		H(x,v)=\frac{1}{2}\abs{v}^2+U(x).
	\end{equation}
	{It is well known that the solution of \eqref{CPD} exactly  conserves this energy}.
	{For the scalar and vector potentials, if they are assumed to satisfy  the invariance properties}
	\begin{equation}\label{in-prop}
		U(e^{\tau S}x)=U(x),\quad e^{-\tau S}A(e^{\tau S}x)=A(x), \quad \forall \tau \in \mathbb{R},
	\end{equation}
 the momentum
	\begin{equation*}
		M(x,v)=(v+A(x))^\intercal Sx
	\end{equation*}
	{is preserved by the solution of \eqref{CPD}} (\cite{18Energy}), where $S$ is a skew-symmetric matrix. Moreover, {if we consider the system \eqref{CPD} under the constant magnetic field, i.e. $B(x)\equiv B$} and $S$ satisfies $Sv=v\times B$, then {$A(x)=-\frac{1}{2}x\times B$ and the momentum becomes}
		{$ M(x,v)=v^\intercal (x\times B)-\frac{1}{2}\abs{x\times B}^2 .$}
%
The third invariant is {the following magnetic moment}
	\begin{equation*}
		I(x,v)=\frac{\abs{v\times B(x)}^2}{2\abs{B(x)}^3}=\frac{\abs{v_\perp}^2}{2\abs{B(x)}},
	\end{equation*}
{where $v_\perp:=\frac{v\times B(x)}{\abs{B(x)}}$ is orthogonal to $B(x)$.
Based on the analysis of \cite{1994Adiabatic,2009Hamiltonian,1963The Adiabatic}, it is  known} that {this} magnetic moment is an \emph{adiabatic invariant}.

{From the point of long time scientific computation,  it} is of great interest to investigate the long time behaviour of numerical methods. In order to get	 numerical methods with near/exact conservation  of
qualitative features of the CPD, various numerical methods have been formulated and analyzed.
Broadly speaking, up to
the present, four categories of structure-preserving algorithms for
the CPD have been in the center of research: energy-preserving methods \cite{Brugnano20,Li-AMC,Li-AML,Chacon20,22Energy,21Error,Wang21} to preserve the energy,
 volume-preserving {schemes} \cite{15Volume}  to preserve the volume, symplectic algorithms \cite{14Symplectic,16Explicit,16Zhang,17Explicit} to preserve the symplecticity and numerical methods  \cite{17Symmetric,Wang20,18Energy,20Long,00Long,16Long} with near conservation of qualitative features.
 Recently splitting methods have been considered for solving CPD  \cite{Ostermann15,21Error,22Energy} and energy-preserving splitting methods have been constructed.
  However, these publications  focus on the   analysis of the accuracy and energy-preserving property of splitting methods. It seems that
the long-time  analysis of  classical splitting methods has not been considered and
the behaviour of energy-preserving splitting methods concerning other
structure-preserving aspects has {not} been investigated in the
literature, such as the long-time numerical conservation of
momentum  and	magnetic moment. \emph{From the perspective of structure preserving algorithm, this is far from enough}.
  Motivated by this point, {this paper applies} backward error analysis for two splitting methods to gain insight into their long-term energy, momentum and magnetic moment performance.

%
	
    The rest of the paper is organized as follows. In section \ref{method}, we discribe two different symmetric splitting methods and study their local errors. Then,  in section \ref{mr&ne}, the  main results on the numeircal long time conservation are presented and three numerical experiments are carried out  to support the theoretical results. The complete analysis of the results are rigorously given in section \ref{proof}. The lastsection  is devoted to the conclusions of this paper.
	\section{Numerical Methods}\label{method}
	{In this section, we present {two splitting} methods and show their elementary features.}
	    {To formulate the methods, we   split the equation \eqref{CPD}} into two subflows:
	\begin{equation}\label{subflow}
		\frac{d}{dt }\begin{pmatrix}
			x \\
			v \\
		\end{pmatrix}= \begin{pmatrix}
			0 \\
			v\times B(x) \\
		\end{pmatrix}
		,\qquad  \frac{d}{dt}\begin{pmatrix}
			x \\
			v \\
		\end{pmatrix}
		= \begin{pmatrix}
			v \\
			E(x) \\
		\end{pmatrix}.
	\end{equation}
 {Then for the first subflow}, which is integrable, one can obtain its exact solution:
	\begin{equation}\label{LM}\Phi^{L}_{t}:\\ \left(
		\begin{array}{c}
			\xi(t) \\
			\eta(t) \\
		\end{array}
		\right)=\left(
		\begin{array}{c}
			\xi(0) \\
			e^{t\tilde{B} (\xi(0))}\eta(0) \\
		\end{array}
		\right),\quad t\geq0,
	\end{equation}
	where $\tilde{B}(x)=\begin{pmatrix}
		0 &B_3(x)& -B_2(x)\\
		-B_3(x) &0 &B_1(x)\\
		B_2(x) &-B_1(x) &0 \\
	\end{pmatrix}$
	is defined by $v\times B(x)=\tilde{B}(x)v$.
	For the second subflow (a canonical Hamiltonian system), we consider applying the average vector field (AVF) formula \cite{1999Geometric} to get its numerical propagator:
	\begin{equation}\label{ILM}
		\Phi^{iL}_{t}:\ \
		\begin{pmatrix}
			{\xi(t)} \\
			{\eta(t)} \\
		\end{pmatrix}
		=\begin{pmatrix}
			\xi(0)+t\eta(0)+\frac{t^2}{2}\int_{0}^1
			E\left( \rho \xi(0)+ (1-\rho)\xi(t) \right) d\rho \\
			\eta(0)+t\int_{0}^1
			E\left(\rho \xi(0)+ (1-\rho)\xi(t)\right) d\rho  \\
		\end{pmatrix}.
	\end{equation}

 {Based on this splitting, two methods are formulated as follows.}
	\begin{algo}[Implicit splitting method]\label{IMS}
		We denote the numerical solution as $x^n\approx x(t_n),\, v^n\approx v(t_n)$ and choose $x^0=x_0,\,v^0=v_0$. Taking a symmetric version \cite{2006Geometric}: $\Phi_h^i = \Phi_\frac{h}{2}^L \circ \Phi_h^{iL} \circ \Phi_\frac{h}{2}^L,$ {then we get its total formula} for solving (\ref{CPD}): for $n\geq0$,
		\begin{equation}\label{eq-IMS}
			\begin{aligned}x^{n+1}=&x^n+he^{\frac{h}{2}\tilde{B} (x^n)}v^n+\frac{h^2}{2}\int_{0}^{1}E\left(\rho x^n+(1-\rho)x^{n+1}\right)d\rho,\\
				v^{n+1}=&e^{\frac{h}{2}\left({\tilde{B} (x^{n+1})+\tilde{B} (x^n)}\right)}v^n+he^{\frac{h}{2}{\tilde{B} (x^{n+1})}}\int_{0}^{1}E\left(\rho x^n+(1-\rho)x^{n+1}\right)d\rho. \\
			\end{aligned}
		\end{equation}
		Obviously, it is an implicit and typical Strang splitting scheme, and we shall { use IMS-O2 to denote the method}.
	\end{algo}	
	
	{If we make some adjustments to \eqref{ILM}, we {can get the following} explicit algorithm.}
	\begin{algo}[Explicit splitting method]\label{EXS}
		For the second subflow in \eqref{subflow}, we linearize the nonlinear integrals in \eqref{ILM}, and then it is transformed to
		\begin{equation}\label{ELM}
			\Phi^{eL}_{t}:\\
			\begin{pmatrix}
				{\xi(t)} \\
				{\eta(t)} \\
			\end{pmatrix}
			=\begin{pmatrix}
				\xi(0)+t\eta(0)+\frac{t^2}{2}E(\xi(0)) \\
				\eta(0)+\frac{t}{2}\left[E(\xi(0))+E(\xi(t))\right]\\
			\end{pmatrix}.
		\end{equation}
		In that way, the Strang splitting scheme $\Phi_h^e:= \Phi_\frac{h}{2}^L \circ \Phi_h^{eL} \circ \Phi_\frac{h}{2}^L$  {for solving \eqref{CPD}} reads:
		\begin{equation}\label{eq-EXS}
			\begin{aligned}x^{n+1}=&x^n+he^{\frac{h}{2}\tilde{B} (x^n)}v^n+\frac{h^2}{2}E(x^n),\\
				v^{n+1}=&e^{\frac{h}{2}\left({\tilde{B} (x^{n+1})+\tilde{B} (x^n)}\right)}v^n+\frac{h}{2}e^{\frac{h}{2}{\tilde{B} (x^{n+1})}}\left[E(x^n)+E(x^{n+1})\right], \\
			\end{aligned}
		\end{equation}
		and we shall call it as EXS-O2.	
	\end{algo}

 {In what follows, we will study the basic properties of these two methods. We begin with their symmetry.}
A numerical method denoted by $y^{n +1} = \Phi_h (y^n)$ is called symmetric if exchanging $y^n\leftrightarrow y^{n +1}$ and $h\leftrightarrow -h$ leaves the method unaltered. {It has been pointed out in \cite{2006Geometric} that symmetric methods have excellent {long time} behavior and play a central role in the geometric numerical integration of differential equations. The following theorem states the symmetry of Algorithms \ref{IMS} and \ref{EXS}.}
\begin{mytheo}[Symmetry]
	 {From the symmetric versions: $\Phi_h^i = \Phi_\frac{h}{2}^L \circ \Phi_h^{iL} \circ \Phi_\frac{h}{2}^L$ and $\Phi_h^e = \Phi_\frac{h}{2}^L \circ \Phi_h^{eL} \circ \Phi_\frac{h}{2}^L$, it follows that Algorithms} \ref{IMS} and \ref{EXS} are symmetric.
\end{mytheo}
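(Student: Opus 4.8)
The plan is to reduce the claim to the standard symmetry criterion that a one-step map $\Phi_h$ is symmetric precisely when $\Phi_{-h}\circ\Phi_h=\mathrm{id}$, equivalently $\Phi_h^{-1}=\Phi_{-h}$. Since both methods are built as the palindromic compositions $\Phi_h^i=\Phi_\frac{h}{2}^L\circ\Phi_h^{iL}\circ\Phi_\frac{h}{2}^L$ and $\Phi_h^e=\Phi_\frac{h}{2}^L\circ\Phi_h^{eL}\circ\Phi_\frac{h}{2}^L$, I would invoke the classical principle of geometric integration that such a three-fold composition is symmetric whenever (i) the outer propagator $\Phi_t^L$ is a genuine flow, so that $(\Phi_\frac{h}{2}^L)^{-1}=\Phi_{-\frac{h}{2}}^L$, and (ii) the inner propagator ($\Phi_h^{iL}$ or $\Phi_h^{eL}$) is itself symmetric. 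Thus the work splits into checking these two ingredients.

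For ingredient (i), I would note that $\Phi_t^L$ in \eqref{LM} is the \emph{exact} solution of the first subflow: the position is frozen ($\xi(t)\equiv\xi(0)$) while the velocity evolves by the matrix exponential $e^{t\tilde{B}(\xi(0))}$ with $\tilde{B}$ evaluated at the fixed point $\xi(0)$. Because the argument of $\tilde{B}$ never changes along this subflow, one has the group property $\Phi_s^L\circ\Phi_t^L=\Phi_{s+t}^L$, and in particular $\Phi_{-\frac{h}{2}}^L\circ\Phi_\frac{h}{2}^L=\Phi_0^L=\mathrm{id}$ together with $(\Phi_\frac{h}{2}^L)^{-1}=\Phi_{-\frac{h}{2}}^L$. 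This part is immediate.

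The substance of the proof is ingredient (ii). For the implicit propagator $\Phi_h^{iL}$ in \eqref{ILM}, the key observation is that the averaged force $\int_{0}^1 E(\rho\xi(0)+(1-\rho)\xi(h))\,d\rho$ is invariant under the exchange $\xi(0)\leftrightarrow\xi(h)$: the substitution $\rho\mapsto 1-\rho$ turns it into $\int_{0}^1 E(\rho\xi(h)+(1-\rho)\xi(0))\,d\rho$. Denoting this common value by $\bar{E}$, I would then perform the symmetry test directly, swapping $\xi(0)\leftrightarrow\xi(h)$, $\eta(0)\leftrightarrow\eta(h)$ and $h\leftrightarrow -h$ in \eqref{ILM} and eliminating $\eta(0)$ via the velocity update $\eta(h)=\eta(0)+h\bar{E}$. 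The algebra collapses because eliminating $\eta(0)$ introduces a $-h^2\bar{E}$ contribution that combines with the original $+\tfrac{h^2}{2}\bar{E}$; solving for $\xi(0)$ then reproduces the transformed position relation $\xi(0)=\xi(h)-h\eta(h)+\tfrac{h^2}{2}\bar{E}$ verbatim, and the velocity equation is symmetric by inspection. The same computation works for the explicit propagator $\Phi_h^{eL}$ in \eqref{ELM}, where one uses instead the trapezoidal velocity update $\eta(h)=\eta(0)+\tfrac h2[E(\xi(0))+E(\xi(h))]$; although the position update now carries only $E(\xi(0))$ rather than an average, eliminating $\eta(0)$ again produces the missing $E(\xi(h))$ term and restores the symmetric form. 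I expect this verification --- tracking exactly which force evaluations survive after eliminating the velocity --- to be the only delicate point.

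Finally, I would assemble the pieces: using $(\Phi_\frac{h}{2}^L)^{-1}=\Phi_{-\frac{h}{2}}^L$ and the symmetry $(\Phi_h^{iL})^{-1}=\Phi_{-h}^{iL}$ (respectively $(\Phi_h^{eL})^{-1}=\Phi_{-h}^{eL}$), the inverse of the full step factors as $(\Phi_h^i)^{-1}=\Phi_{-\frac{h}{2}}^L\circ\Phi_{-h}^{iL}\circ\Phi_{-\frac{h}{2}}^L=\Phi_{-h}^i$, and likewise for $\Phi_h^e$. This is exactly the symmetry condition $\Phi_h^{-1}=\Phi_{-h}$, completing the proof for both Algorithms \ref{IMS} and \ref{EXS}.
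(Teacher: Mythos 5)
Your proposal is correct, and it follows the same route that the paper's theorem statement gestures at --- but it is worth noting that the paper in fact supplies \emph{no proof at all}: the theorem simply asserts that symmetry ``follows from'' the palindromic compositions $\Phi_h^i = \Phi_{h/2}^L \circ \Phi_h^{iL} \circ \Phi_{h/2}^L$ and $\Phi_h^e = \Phi_{h/2}^L \circ \Phi_h^{eL} \circ \Phi_{h/2}^L$, implicitly appealing to the standard composition theorem of geometric integration. Your write-up supplies precisely the ingredients that make this appeal rigorous, and you correctly identify where the real content lies: the classical ``Strang splitting of exact flows is symmetric'' argument does not apply verbatim here, because the inner maps $\Phi_h^{iL}$ and $\Phi_h^{eL}$ are \emph{numerical} propagators (AVF and a Verlet-type linearization), not exact flows, so one must separately verify that each is self-adjoint, i.e. $(\Phi_h^{iL})^{-1}=\Phi_{-h}^{iL}$ and $(\Phi_h^{eL})^{-1}=\Phi_{-h}^{eL}$. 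Your verifications are sound: the substitution $\rho\mapsto 1-\rho$ shows the averaged force in \eqref{ILM} is invariant under $\xi(0)\leftrightarrow\xi(h)$, and eliminating the initial velocity through the velocity update indeed reproduces the transformed position relation for both \eqref{ILM} and \eqref{ELM}; likewise the group property $\Phi_s^L\circ\Phi_t^L=\Phi_{s+t}^L$ of \eqref{LM} holds because the position is frozen along the first subflow, so the matrix exponentials share the same argument and commute. In short, your proof is a complete and correct substantiation of a claim the paper leaves unproved.
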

	For these {two algorithms, we {next investigate and briefly show their local errors.} }
		 \begin{mytheo}[Local errors]\label{Local errors} For the two methods presented above, {under  the local assumptions $x(t_n)=x^n, v(t_n)=v^n,$  the local errors are given as
	 \begin{equation*}
			x^{n+1}-x(t_{n+1})=\mathcal{O}(h^3),\quad v^{n+1}-v(t_{n+1})=\mathcal{O}(h^3),
		\end{equation*}  {where the constants} symbolized by $\mathcal{O}$ are independent of $n$ {and} $h$.}
	\end{mytheo}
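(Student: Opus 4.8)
The plan is to treat the combined state $y=(x,v)^\intercal$ and regard each method as a one-step map $y^{n+1}=\Phi_h(y^n)$ for the first-order system $\dot y = F(y)$ with $F(x,v)=(v,\ v\times B(x)+E(x))^\intercal$. Rather than push everything to third order by brute force, I would exploit the symmetry already established: a one-step method that is symmetric and consistent automatically has even order, hence order at least two, which is exactly the $\mathcal{O}(h^3)$ local error claimed. Thus the task reduces to (i) showing each $\Phi_h$ is a well-defined smooth map admitting an asymptotic expansion in $h$, and (ii) verifying consistency, i.e. $\Phi_h(y)=y+hF(y)+\mathcal{O}(h^2)$.

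For step (i), the explicit method EXS-O2 is given by closed-form smooth expressions in $h$ (first $x^{n+1}$, then $v^{n+1}$), so the expansion is immediate. For the implicit method IMS-O2 the position update is defined implicitly through $x^{n+1}=x^n+h e^{\frac h2 \tilde B(x^n)}v^n+\frac{h^2}{2}\int_0^1 E(\rho x^n+(1-\rho)x^{n+1})\,d\rho$; since $E$ is smooth, the right-hand side is a contraction in $x^{n+1}$ for $h$ small, so the implicit function theorem yields a unique solution depending smoothly on $(x^n,v^n,h)$. This guarantees the required expansion $\Phi_h(y)\sim\sum_{k\ge 0}h^k d_k(y)$ with $d_0=\mathrm{id}$.

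For step (ii) I would Taylor-expand the two update formulas. Using $e^{\frac h2\tilde B}=I+\frac h2\tilde B+\mathcal{O}(h^2)$, $x^{n+1}=x^n+\mathcal{O}(h)$, and $\int_0^1 E(\cdots)\,d\rho=E(x^n)+\mathcal{O}(h)$, the position line gives $x^{n+1}=x^n+hv^n+\mathcal{O}(h^2)$, and the velocity line gives $v^{n+1}=v^n+h\big(\tilde B(x^n)v^n+E(x^n)\big)+\mathcal{O}(h^2)=v^n+h\big(v^n\times B(x^n)+E(x^n)\big)+\mathcal{O}(h^2)$, matching $y+hF(y)$. The identical expansion holds for EXS-O2, since its formulas differ from the implicit ones only in the $\mathcal{O}(h)$-perturbed integral term. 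Hence both methods are consistent of order one.

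Finally, combining consistency with the symmetry proven in the preceding theorem, the classical fact that a symmetric one-step method has even order forces the order to be at least two, so that $x^{n+1}-x(t_{n+1})=\mathcal{O}(h^3)$ and $v^{n+1}-v(t_{n+1})=\mathcal{O}(h^3)$ under the local assumption $x(t_n)=x^n,\ v(t_n)=v^n$. Uniformity of the $\mathcal{O}$-constants in $n$ and $h$ follows because, working on a fixed compact neighbourhood of the local initial value, all bounds involve only $\sup$-norms of $E$, $B$ and finitely many of their derivatives. The main obstacle I anticipate is the bookkeeping for the implicit map: one must confirm not merely that $x^{n+1}$ exists and is unique for small $h$ but that it is smooth in $h$, so that a genuine expansion in powers of $h$ exists, since the symmetry-implies-even-order argument presupposes such an expansion; once this is secured, the order count is essentially automatic. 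An alternative, fully self-contained route is to expand both the numerical and the exact solutions to second order in $h$ and compare term by term, but this is considerably more laborious and does not exploit the symmetry already in hand.
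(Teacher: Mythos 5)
Your proposal is correct, but it takes a genuinely different route from the paper. The paper's proof is a direct second-order computation: writing $y=(x,v)$ and $f=f_1+f_2$, it Taylor-expands the exact flow as $\left[Id+hf+\frac{h^2}{2}f'f\right](y)+\mathcal{O}(h^3)$ and each of the maps $\Phi^L_h$, $\Phi^{iL}_h$, $\Phi^{eL}_h$ to the same order, then composes the three maps and checks that the composition reproduces $\left[Id+h(f_1+f_2)+\frac{h^2}{2}(f_1'f_1+f_1'f_2+f_2'f_1+f_2'f_2)\right](y)$, i.e.\ agrees with the exact flow up to $\mathcal{O}(h^3)$. You instead verify only first-order consistency and then invoke the classical fact that a symmetric one-step method has even order (the adjoint-method argument of Hairer--Lubich--Wanner, Geometric Numerical Integration, Ch.~II), so the order is at least $2$ and the local error is $\mathcal{O}(h^3)$. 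This is a legitimate and more structural argument: it avoids all second-order bookkeeping (in particular the cross terms $f_1'f_2+f_2'f_1$) and would apply verbatim to any consistent palindromic composition. What it costs you is exactly what you flagged: you must establish that each method admits a genuine expansion in powers of $h$ --- for IMS-O2 this requires the contraction/implicit-function argument for the AVF substep --- and you must lean on the paper's symmetry theorem, which is itself stated there with only a one-line justification. Note that the paper's own proof tacitly assumes the same smoothness when it writes $\Phi_h^{iL}(y)=\left[Id+hf_2+\frac{h^2}{2}f_2'f_2\right](y)+\mathcal{O}(h^3)$ for the implicit substep, so your approach is not at a disadvantage on that point; and both arguments yield constants uniform on the compact set where the numerical solution is assumed to stay, hence independent of $n$ and $h$.
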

	\begin{proof}
{For simplicity of notations, we denote
		\begin{equation*}
		y:=\begin{pmatrix}
		 		x\\v
		 	\end{pmatrix},\quad	 f(y):=\begin{pmatrix}
				v\\v\times B(x)+E(x)
			\end{pmatrix},\quad f_1(y):=\begin{pmatrix}
				0\\v\times B(x)
			\end{pmatrix},\quad f_2(y)=\begin{pmatrix}
				v\\E(x)
			\end{pmatrix},
		\end{equation*}
 and let} $\Phi_h$  stand for the exact flow. Then the original system can be rewritten as  $\dot{y}=f(y)=f_1(y)+f_2(y)$.
	
		By Taylor expansions, we  reach
	\begin{align*}
		\Phi_h(y)=\left[Id+hf+\frac{h^2}{2}f'f\right](y)+\mathcal{O}(h^3),\quad \Phi_h^L(y)=\left[Id+hf_1+\frac{h^2}{2}f'_1f_1\right](y)+\mathcal{O}(h^3),\\
		\Phi_h^{iL}(y)=\left[Id+hf_2+\frac{h^2}{2}f'_2f_2\right](y)+\mathcal{O}(h^3),\quad \Phi_h^{eL}(y)=\left[Id+hf_2+\frac{h^2}{2}f'_2f_2\right](y)+\mathcal{O}(h^3).
	\end{align*}
Note that $\Phi_h^{iL}(y)\neq\Phi_h^{eL}(y)$ due to their different {coefficients} of the term $\mathcal{O}(h^3)$. After some calculations, we get
	{\begin{align*}
		\Phi_h^i(y)&=\Phi_\frac{h}{2}^L \circ \Phi_h^{iL} \circ \Phi_\frac{h}{2}^L(y) =\left[Id+h(f_1+f_2)+\frac{h^2}{2}(f'_1f_1+f'_1f_2+f'_2f_1+f'_2f_2)\right](y
		)+\mathcal{O}(h^3)\\
		&=\Phi_h(y)+\mathcal{O}(h^3).
	\end{align*}}Even if EXS-O2 is the linearization of IMS-O2, the expansions of $\Phi_h^{iL}$ and $\Phi_h^{eL}$ are similar. Hence, it can be {verified} that this makes no difference to the error result, {i.e.,} {$
	\Phi_h^e(y)-\Phi_h(y)=\mathcal{O}(h^3).$}
	{\hfill $ \blacksquare$}\end{proof}
	\section{Main results and numerical experiments}\label{mr&ne}
	\subsection{Main results}\label{mr}
	{In this subsection},  {we assume that the scalar potential $U(x)$ and  the magnetic field $B(x)$, as functions of $x$, are arbitrarily differentiable. In this paper,  $B(x)\equiv B$ represents the constant magnetic field and $U(x)=\frac{1}{2}x^\intercal Qx + q^\intercal x$ with a symmetric matrix $Q$ refers to the quadratic scalar potential.}  {Meanwhile, it is assumed that there exists a compact set  (independent of
	 $h$)  such that the result $(x^n, v^n)$ produced by the considered method stays in this set.} Then the energy, momentum and magnetic moment behaviours of Algorithms  \ref{IMS} and \ref{EXS} are given as follows one by one.
	\begin{mytheo}[Energy conservation]\label{energy}The method IMS-O2 exactly {preserves the energy \eqref{energy def}} of CPD \cite{22Energy}, i.e.
		\begin{equation*}
			IMS-O2:\qquad H(x^n,v^n)=H(x^0,v^0) \qquad for \qquad nh\le T.
		\end{equation*}
	   {We assume that  $B(x)\equiv B$ or $U(x)=\frac{1}{2}x^\intercal Qx + q^\intercal x$, then} {the energy \eqref{energy def}}  along the numerical solution over long {times} is conserved as follows
		\begin{equation}\label{eq-energy}
		EXS-O2:\ \ 	{H(x^n,v^n)=H(x^0,v^0)+\mathcal{O}(h^2)}\quad\text{for}\ \  nh\le h^{2-N},
		\end{equation}
{ with an arbitrarily large positive integer $N$.}

{Moreover, the method EXS-O2 has an exact modified energy conservation {which is stated as below.}
{If the scalar potential $U(x)$ is quadratic, EXS-O2 \eqref{eq-EXS} exactly preserves the modified energy
		\begin{equation}\label{Hh}
			H_h(x,v)=\frac{1}{2}\abs{v}^2+U(x)-\frac{h^2}{8}\abs{\nabla U(x)}^2
		\end{equation}
		at the discrete level, i.e.
		\begin{equation}\label{Hherr}	EXS-O2:\qquad H_h(x^{n+1},v^{n+1})=H_h(x^n,v^n)\qquad for \qquad nh\le T.
		\end{equation} }}
	\end{mytheo}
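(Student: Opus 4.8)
The plan is to establish the three assertions in order, obtaining the long-time statement for EXS-O2 from the exact modified-energy identity together with backward error analysis. \emph{Exact energy conservation of IMS-O2.} I would argue that each of the three factors in the Strang composition $\Phi_h^i=\Phi_{h/2}^L\circ\Phi_h^{iL}\circ\Phi_{h/2}^L$ preserves $H$ exactly, so their composition does as well. For $\Phi^L$ this is immediate from \eqref{LM}: the position is frozen while the velocity is mapped by the orthogonal matrix $e^{t\tilde{B}(\xi(0))}$, so both $|v|$ and $U(x)$ are unchanged. For $\Phi^{iL}$ I would invoke the defining property of the average vector field formula \eqref{ILM}: the second subflow is the canonical Hamiltonian system with Hamiltonian $H$, its velocity increment $\eta(t)-\eta(0)$ equals $t$ times the average gradient $\int_0^1 E(\rho\xi(0)+(1-\rho)\xi(t))\,d\rho$, and pairing the energy difference $H(\xi(t),\eta(t))-H(\xi(0),\eta(0))$ against this increment lets the skew-symmetry of the canonical structure annihilate it. This reproduces the result cited from \cite{22Energy}.

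\emph{Exact conservation of $H_h$ by EXS-O2 for quadratic $U$.} The key observation is that $\Phi_h^{eL}$ in \eqref{ELM} is exactly the St\"ormer--Verlet step for $\ddot{x}=E(x)=-\nabla U(x)$. Since $\Phi_{h/2}^L$ again freezes $x$ and rotates $v$ isometrically, it leaves $H_h$ of \eqref{Hh} invariant, so it suffices to prove $H_h(\Phi_h^{eL}(x,v))=H_h(x,v)$. Writing $g:=\nabla U(x)$ and $g^+:=\nabla U(x^+)$ with $x^+=x+hv-\tfrac{h^2}{2}g$ and $v^+=v-\tfrac{h}{2}(g+g^+)$, I would use that for quadratic $U$ the gradient is affine, so the trapezoidal/midpoint identity $U(x^+)-U(x)=\tfrac12(g+g^+)\cdot(x^+-x)$ holds \emph{exactly}. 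Substituting this together with $\tfrac12|v^+|^2-\tfrac12|v|^2$ into $H_h(x^+,v^+)-H_h(x,v)$, the terms containing $v$ cancel and the remaining expression, purely quadratic in $g$ and $g^+$, collapses to $0$ by a short algebraic identity. This yields \eqref{Hherr} step by step.

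\emph{Long-time $\mathcal{O}(h^2)$ energy conservation of EXS-O2.} I would split into the two hypotheses. If $U$ is quadratic the statement is immediate and in fact holds for all $n$: by the previous paragraph $H(x^n,v^n)=H_h(x^n,v^n)+\tfrac{h^2}{8}|\nabla U(x^n)|^2=H_h(x^0,v^0)+\tfrac{h^2}{8}|\nabla U(x^n)|^2$, and since the numerical trajectory stays in a fixed compact set $|\nabla U(x^n)|^2$ is uniformly bounded, whence $H(x^n,v^n)-H(x^0,v^0)=\mathcal{O}(h^2)$. The substantive case is $B(x)\equiv B$ with general $U$, which I would treat by backward error analysis. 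Using the symmetry established above and the second-order consistency of Theorem \ref{Local errors}, the method admits a modified equation $\dot{y}=f(y)+h^2f_3(y)+h^4f_5(y)+\cdots$ containing only even powers of $h$. I would then construct, order by order, a modified energy $\tilde{H}=H+h^2H_2+h^4H_4+\cdots$ that is a formal first integral of this equation, truncate both series at an order high enough that the time-$h$ flow of the truncated field reproduces $\Phi_h^e$ up to $\mathcal{O}(h^{N+2})$, and estimate the drift: $\tilde{H}$ then changes by $\mathcal{O}(h^{N+2})$ per step, hence by $\mathcal{O}(n\,h^{N+2})=\mathcal{O}(h^2)$ on the range $nh\le h^{2-N}$ (where $n\le h^{1-N}$); since $\tilde{H}=H+\mathcal{O}(h^2)$ on the compact set, this transfers to $H$.

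\emph{Main obstacle.} The delicate step is the construction of $\tilde{H}$ in the constant-$B$ case. Unlike the standard symplectic setting, the splitting is \emph{not} a Poisson integrator for the non-canonical CPD bracket with structure matrix $\Pi=\begin{pmatrix}0 & I\\ -I & \tilde{B}\end{pmatrix}$: one checks that neither $\Phi^L$ (the isometric Lorentz rotation, which is not the flow of any sub-Hamiltonian of $\Pi$) nor the canonical Verlet factor preserves $\Pi$, so no off-the-shelf modified-Hamiltonian theorem applies. Consequently $H_2,H_4,\dots$ must be produced directly from the homological equations $\nabla H_{2j}\cdot f+(\text{lower-order data})=0$, and I expect the core of the proof to be verifying that the constant-$B$ hypothesis---constancy of $\tilde{B}$ and the exact energy conservation of $\Phi^L$---is precisely what forces the obstruction terms to be coboundaries so that these equations are solvable. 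The compactness assumption is used throughout to bound the derivatives of $U$, $B$ and the modified-field coefficients, ensuring that every $\mathcal{O}(\cdot)$ is uniform in $n$.
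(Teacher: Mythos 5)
Your first two parts are correct, and in places better than what the paper does. For IMS-O2 the paper simply cites \cite{22Energy}, whereas you prove it factor-by-factor: $\Phi^L$ freezes $x$ and rotates $v$ orthogonally, and \eqref{ILM} is exactly the AVF/discrete-gradient scheme for the canonical subsystem with Hamiltonian $H$, so each factor preserves $H$ — this is sound. Your proof of \eqref{Hherr} is also correct: recognizing $\Phi_h^{eL}$ as a St\"ormer--Verlet step and using the exact trapezoidal identity for quadratic $U$ gives $\frac{1}{2}\abs{v^+}^2-\frac{1}{2}\abs{v}^2+U(x^+)-U(x)=\frac{h^2}{8}\left(\abs{\nabla U(x^+)}^2-\abs{\nabla U(x)}^2\right)$, which is the same cancellation the paper obtains by working on the composed scheme \eqref{eq-EXS} and exploiting the skew-symmetry of $\tilde{B}$; your factor-wise route is cleaner. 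Your deduction of the quadratic-$U$ half of \eqref{eq-energy} directly from exact $H_h$-conservation plus compactness is a genuine strengthening: it holds for \emph{all} $n$ with no restriction $nh\le h^{2-N}$, whereas the paper routes this case through the almost-invariant of Corollary \ref{Q-ene-EXS}.

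The gap is in the constant-$B$ case, which is the substantive half of \eqref{eq-energy}. You set up the right skeleton (symmetric method, modified equation in even powers of $h$, truncation, telescoping with $n\le h^{1-N}$), but you explicitly defer the crux: you \emph{expect} the homological equations for $H_2,H_4,\dots$ to be solvable because constancy of $B$ should make the obstructions coboundaries, and — as you yourself observe — no Poisson/symplectic modified-Hamiltonian theorem applies here, so the existence of the modified first integral is precisely what has to be proved, not assumed. The paper closes exactly this hole with a concrete device your proposal lacks: using symmetry to eliminate $v^n$ and write EXS-O2 as a two-step recurrence in $x$ alone, which yields the modified second-order equation \eqref{modi-EXS}, and then multiplying by $\dot{z}^\intercal$. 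Every term on the left is a total derivative via $\dot{z}^\intercal z^{(2k)}=\frac{d}{dt}\bigl(\dot{z}^\intercal z^{(2k-1)}-\ddot{z}^\intercal z^{(2k-2)}+\cdots\bigr)$, the electric term integrates to $-\frac{d}{dt}U(z)$, and — this is exactly where $B(x)\equiv B$ enters — the residual magnetic terms $\dot{z}^\intercal\bigl(z^{(2k+1)}\times B\bigr)$ are total derivatives as well (Lemma \ref{B(x)-ene-EXS} and Corollary \ref{B-ene-EXS}), while the scalar factor $\tan\bigl(\frac{h}{2}\abs{B}\bigr)/\bigl(\frac{h}{2}\abs{B}\bigr)$ is a harmless constant. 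For nonconstant $B$ both facts fail, which is why the hypothesis is what it is. Without this explicit construction (or an equivalent one), your constant-$B$ case remains a program rather than a proof; the final telescoping estimate is fine once the almost-invariant exists.
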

	\begin{mytheo}[Momentum conservation]\label{momentum}
	 { Under the conditions \eqref{in-prop} and  the following two assumptions:}
		\begin{itemize}
			\item  {$B(x)\equiv B$} and $Sv=v\times B$,
			\item {$U(x)=\frac{1}{2}x^\intercal Qx + q^\intercal x$ } and $QS=SQ$,
		\end{itemize}
 the momentum $M(x,v)=(v+A(x))^\intercal Sx$ along the numerical solution  {over long times is nearly preserved as}
		\begin{equation}\label{eq-momentum}
			{M(x^n,v^n)=M(x^0,v^0)+\mathcal{O}(h^2)} \quad \ \text{for}\quad \  nh\le h^{2-N},
		\end{equation}	
{where $N$ is an arbitrarily large positive integer.}
	\end{mytheo}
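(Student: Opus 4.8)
The plan is to carry out the backward error analysis advertised in the abstract. Writing $y=(x,v)^\intercal$ and $f=f_1+f_2$ as in the proof of Theorem \ref{Local errors}, I would first produce the modified equation $\dot y=f_h(y)=f(y)+h^2 f_3(y)+h^4 f_5(y)+\cdots$, in which only even powers of $h$ occur because the scheme is symmetric. Since both methods are compositions of the three elementary propagators, the coefficients $f_{2j+1}$ are the Baker--Campbell--Hausdorff combinations of iterated Lie brackets of $f_1$ and $f_2$. For a fixed large integer $N$ I would truncate the series at a suitable order $R=R(N)$ and let $\varphi_h$ denote the time-$h$ flow of the truncated field; the standard estimate then yields a one--step defect $\Phi_h(y)-\varphi_h(y)=\mathcal{O}(h^{R+1})$ uniformly on the assumed compact set.

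The heart of the argument is the symmetry. Introduce the one--parameter group $g_\tau(x,v)=(e^{\tau S}x,\,e^{\tau S}v)$ with infinitesimal generator $\mathcal{S}(y)=(Sx,Sv)^\intercal$; the momentum $M$ is exactly its associated first integral, and a short computation (using $S^\intercal=-S$, together with $QS=SQ$ and $Sq=0$, the last two following from the invariance property \eqref{in-prop}) gives $\nabla M^\intercal f=0$. I would then check that each elementary propagator commutes with $g_\tau$: the magnetic half--flow does because $e^{tS}$ commutes with $e^{\tau S}$ (here $\tilde B\equiv S$ is constant), while the electric propagator $\Phi_h^{eL}$ (resp. $\Phi_h^{iL}$) does because, for quadratic $U$, the field $E(x)=-(Qx+q)$ is $g_\tau$--equivariant precisely when $QS=SQ$ and $Sq=0$. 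Hence the whole scheme satisfies $\Phi_h\circ g_\tau=g_\tau\circ\Phi_h$. Because the modified field is uniquely determined by matching the (now equivariant) scheme and $g_\tau$ is a fixed linear map, each $f_{2j+1}$ inherits the equivariance $f_{2j+1}(g_\tau y)=g_\tau f_{2j+1}(y)$, equivalently the commutation $[\mathcal{S},f_{2j+1}]=0$.

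From this equivariance I would construct a modified momentum $M_h=M+h^2M_3+h^4M_5+\cdots$ that is an exact first integral of the truncated modified flow, i.e. $\nabla M_h^\intercal f_h=0$ solved order by order. The $h^0$ relation is the exact conservation $\nabla M^\intercal f=0$; at order $h^{2j}$ one must solve a cohomological equation of the form $L_f M_{2j+1}=-\,(\text{terms built from }f_{2i+1}\text{ and }M_{2k+1}\text{ with }i,k<j)$ for the corrector $M_{2j+1}$. Solvability is where the symmetry pays off: since $f_h$ commutes with $\mathcal{S}=\mathbb{J}\nabla M$ (the momentum generating the group in the Poisson structure $\mathbb{J}$ of the Lorentz system), the inhomogeneities are $g_\tau$--invariant and lie in the range of $L_f$, so each $M_{2j+1}$ can be chosen $g_\tau$--invariant, giving $M_h=M+\mathcal{O}(h^2)$ on the compact set. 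Assembling the estimate, $M_h(x^{n+1},v^{n+1})-M_h(x^n,v^n)$ splits into the change of $M_h$ along the modified flow (zero, by exact conservation) plus a term controlled by the one--step defect $\mathcal{O}(h^{R+1})$; summing over $n\le h^{1-N}$ steps and choosing $R\ge N$ keeps the total drift of $M_h$ far below $h^2$ on the window $nh\le h^{2-N}$. Since $M=M_h+\mathcal{O}(h^2)$, this yields $M(x^n,v^n)=M(x^0,v^0)+\mathcal{O}(h^2)$, which is \eqref{eq-momentum}.

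The step I expect to be the main obstacle is the construction of $M_h$: EXS--O2 is explicit and need not be symplectic, so Noether's theorem cannot be invoked verbatim, and one must verify directly that the equivariance $[\mathcal{S},f_{2j+1}]=0$ forces the cohomological equations above to be solvable with $g_\tau$--invariant correctors. For IMS--O2 I would lean on its exact energy conservation, and for EXS--O2 on the exact modified energy \eqref{Hh}, checking in each case that the relevant (modified) energy is $g_\tau$--invariant so that the momentum corrections close up at every order.
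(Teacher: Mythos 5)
Your overall skeleton (modified equation, modified first integral, telescoping over $n\le h^{1-N}$ steps) matches the paper's strategy, but the step you yourself flag as the main obstacle is a genuine gap, and the way you propose to close it does not work. You claim that because the modified field $f_h$ is $g_\tau$-equivariant, the inhomogeneities $G_{2j+1}=-\nabla M^\intercal f_{2j+1}$ "lie in the range of $L_f$," so the cohomological equations $L_f M_{2j+1}=G_{2j+1}$ are solvable. Equivariance alone does not imply this. Concretely, $f_3=\nabla M$ is $g_\tau$-equivariant (since $M\circ g_\tau=M$ and $g_\tau$ is orthogonal, $\nabla M(g_\tau y)=g_\tau \nabla M(y)$), yet $\nabla M^\intercal f_3=|\nabla M|^2\ge 0$, so a solution of $L_f M_3=-|\nabla M|^2$ would have to decrease monotonically along every orbit of the CPD flow; on bounded, recurrent orbits with $\nabla M\neq 0$ no bounded corrector $M_3$ exists. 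Thus an equivariant perturbation can destroy near-conservation of $M$, and your argument, which uses nothing about $f_{2j+1}$ beyond equivariance, cannot be completed as stated. Your fallback—leaning on the exact (modified) energy conservation—does not repair this either: neither method is symplectic with respect to the Poisson structure of the Lorentz system (as you note), so no Noether-type link from an invariant energy to momentum conservation is available. A minor additional inaccuracy: the coefficients $f_{2j+1}$ are not pure BCH Lie-bracket combinations of $f_1,f_2$, because the middle propagators $\Phi_h^{iL}$, $\Phi_h^{eL}$ are not the exact flow of $f_2$; this does not affect the existence of the modified field, but it does affect any argument that tries to read structure off the brackets.

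The paper avoids the abstract solvability question entirely. It never forms a first-order modified field $\dot y=f_h(y)$; instead it eliminates $v^n$ between consecutive steps, obtains a two-step recursion for the positions, and derives a second-order modified equation for $z(t)$ containing only even-order derivatives (equations \eqref{modi-EXS} and \eqref{modi-IMS}). It then multiplies this equation by $z^\intercal S$ and verifies, term by term, that everything is an exact time derivative: $z^\intercal S z^{(2k)}$ always is (skew-symmetry of $S$); the magnetic terms $z^\intercal S\left(z^{(2k+1)}\times B\right)=z^\intercal S^2 z^{(2k+1)}$ are because $S^2$ is symmetric; the invariance \eqref{in-prop} gives $z^\intercal S\nabla U(z)=0$ and $z^\intercal S(\dot z\times B)=-\frac{d}{dt}\left(z^\intercal SA(z)\right)$; and the extra $E'$ terms present for IMS-O2 close up because, for quadratic $U$ with $QS=SQ$, the matrix $SQ$ is skew-symmetric and $S^2Q$ is symmetric. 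This exhibits the almost-invariant $M_h=M+\mathcal{O}(h^2)$ with per-step drift $\mathcal{O}(h^{N+1})$ by explicit construction—precisely the existence statement your proposal leaves unproven—after which the telescoping argument coincides with your final paragraph. So the hypotheses of the theorem are not used (as in your sketch) to make the scheme equivariant, but to make these concrete algebraic identities hold.
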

	\begin{mytheo}[Magnetic moment conservation] \label{mag-mom}
		 Assume that {the following two assumptions hold:}
		\begin{itemize}
			\item {$B(x)\equiv B$ },
			\item {$U(x)=\frac{1}{2}x^\intercal Qx + q^\intercal x$} and $Q\widehat{B}=\widehat{B}Q$ with $v\times \frac{B}{\abs{B}}=\widehat{B}v$,
		\end{itemize}
		then {the near conservation of magnetic moment $I(x,v)$ is}
		\begin{equation}\label{eq-mag-mom}
			{I(x^n,v^n)=I(x^0,v^0)+\mathcal{O}(h^2)} \quad\text{for}\quad nh\le h^{2-N},
		\end{equation}
{with an arbitrarily large positive integer $N$.}
	\end{mytheo}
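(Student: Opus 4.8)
The plan is to run the same backward error analysis that underlies Theorems~\ref{energy} and \ref{momentum}, and to exploit the fact that under $B(x)\equiv B$ and $U(x)=\frac{1}{2}x^\intercal Qx+q^\intercal x$ both the exact flow of \eqref{CPD} and the maps \eqref{eq-EXS} and \eqref{eq-IMS} are affine in $y=(x,v)$. First I would attach to EXS-O2 a modified equation $\dot y=f_h(y)$ whose time-$h$ flow formally reproduces $\Phi_h^e$. Composing the Taylor expansions of $\Phi_{h/2}^L$, $\Phi_h^{eL}$, $\Phi_{h/2}^L$ exactly as in the proof of Theorem~\ref{Local errors} and reorganising them by the Baker--Campbell--Hausdorff formula gives $f_h=f+h^2 f^{[3]}+h^4 f^{[5]}+\cdots$; the key structural input is the symmetry of the scheme, which annihilates every odd-order term so that only even powers of $h$ survive. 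In the present quadratic/constant-field regime each $f^{[2k+1]}$ is again affine, so any truncation $f_h^{[\le 2M]}$ generates a linear flow $\varphi_t^{h}$ that can be analysed explicitly; the same construction applies verbatim to IMS-O2.

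Next I would control the distance between the numerical orbit and this modified flow on the stated interval. Truncating $f_h$ after the $h^{2M}$ term, the one-step discrepancy between $\Phi_h^e$ and $\varphi_h^{h}$ is $\mathcal O(h^{2M+1})$ on the compact set containing the iterates. Because the leading generator has purely imaginary spectrum (a gyration coupled to a harmonic oscillator), $\varphi_t^h$ is a near-isometry and the one-step errors accumulate additively rather than exponentially; summing over $n\le h^{1-N}$ steps yields $\abs{(x^n,v^n)-\varphi_{t_n}^h(x^0,v^0)}=\mathcal O(h^{2M+2-N})$, which is $\mathcal O(h^2)$ as soon as $M$ is chosen with $2M\ge N$. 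Hence on $nh\le h^{2-N}$ the numerical magnetic moment equals $I$ evaluated along the modified flow, up to $\mathcal O(h^2)$.

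It then remains to show that $I$ is nearly conserved along $\varphi_t^h$. Here the hypothesis $Q\widehat B=\widehat B Q$ is decisive: it lets me simultaneously block-diagonalise $Q$ and $\widehat B$, splitting the dynamics into the harmonic motion along $B$ and a rotation-invariant planar motion in the plane orthogonal to $B$, on which $I=\abs{v_\perp}^2/(2\abs{B})$ lives. For the modified flow the modified energy $H_h$ of \eqref{Hh} is exactly preserved and the modified momentum underlying Theorem~\ref{momentum} is preserved up to $\mathcal O(h^2)$; these two integrals confine $\varphi_t^h$ to an invariant set that is $\mathcal O(h^2)$-close to the corresponding exact level set. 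Since both integrals are $\widehat B$-equivariant, each correction field $f^{[2k+1]}$ respects the planar rotation and does not alter the slow evolution of $I$ to leading order. Chaining the three steps gives $I(x^n,v^n)=I(x^0,v^0)+\mathcal O(h^2)$ on $nh\le h^{2-N}$.

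The main obstacle is this last step, and it is genuinely different from the energy and momentum cases. Unlike $H$ and $M$, the magnetic moment is only an \emph{adiabatic} invariant of \eqref{CPD}, so there is no exact conservation law to perturb into a modified $I_h$; the argument must instead quantify the oscillation of $I$ through the two genuine integrals $H$ and $M$ and show that the modified (numerical) oscillation matches the exact one to $\mathcal O(h^2)$ uniformly over the exponentially long interval. Making the $\widehat B$-equivariance of every backward-error correction precise, and ruling out the near-resonances between the gyration frequency $\abs{B}$ and the harmonic frequencies encoded in $Q$ that would spoil the band on which $I$ is confined, is the technical heart of the proof; once this equivariance is secured, the invariance of the modified magnetic moment and the long-time bound follow along the lines of Theorems~\ref{energy} and \ref{momentum}.
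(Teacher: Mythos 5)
Your opening step (symmetric backward error analysis producing a modified equation in even powers of $h$) is the same as the paper's, but from there your proposal diverges, and the divergence is fatal: you explicitly assert that ``there is no exact conservation law to perturb into a modified $I_h$,'' whereas constructing exactly such a modified magnetic moment is the heart of the paper's proof. The paper rewrites the modified equation of EXS-O2 in the form \eqref{modi-EXS-2} (and analogously \eqref{modi-IMS-2} for IMS-O2), multiplies it with $\frac{1}{\abs{B}}(\ddot{z}\times b)^{\intercal}$, and observes that the resulting terms $(\ddot{z}\times b)^{\intercal}\left(z^{(2k+1)}\times b\right)$ and $(\ddot{z}\times b)^{\intercal}z^{(2k)}$ are total time derivatives, the $k=0$ term being $\frac{d}{dt}I(z,\dot z)$. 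What survives is a term involving $E(z)$, and this is where the hypothesis $Q\widehat{B}=\widehat{B}Q$ enters: it forces $\widehat{B}Q$ to be skew-symmetric and $\widehat{B}^{2}Q$ to be symmetric, so that $(\ddot{z}\times b)^{\intercal}(Qz+q)$, $(\ddot{z}\times b)^{\intercal}Qz^{(2k)}=-\ddot{z}^{\intercal}\widehat{B}Qz^{(2k)}$ and $(\ddot{z}\times b)^{\intercal}\left(Qz^{(2k+1)}\right)\times b=-\ddot{z}^{\intercal}\widehat{B}^{2}Qz^{(2k+1)}$ are themselves total derivatives. This yields almost invariants $\widehat{I}_h^{e},\widehat{I}_h^{i}$ with $\frac{d}{dt}\widehat{I}_h=\mathcal{O}(h^{N})$ along the modified equations, and \eqref{eq-mag-mom} then follows by the same telescoping-sum argument used for \eqref{eq-energy} and \eqref{eq-momentum}. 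Note that no resonance condition and no comparison of trajectories is needed anywhere: under the stated algebraic hypotheses the argument is purely a matter of recognizing total derivatives.

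Your substitute for this missing lemma cannot work. You propose to control $I$ by confinement to the joint level sets of the conserved energy \eqref{Hh} and the nearly conserved momentum $M$; but these joint level sets are (generically) four-dimensional submanifolds of the six-dimensional phase space, and $I$ is not constant on them, so membership in such a level set gives no bound whatsoever on the drift of $I$. Two scalar invariants cannot pin down a third independent function, and the appeal to $\widehat{B}$-equivariance of the correction fields is not a proof device that closes this hole — what is required is an (almost) conservation law whose leading part is $I$ itself, which is precisely the object you claim cannot exist and which the paper constructs. A secondary weakness is your long-time trajectory comparison: summing one-step errors $\mathcal{O}(h^{2M+1})$ additively over $n\le h^{1-N}$ steps presumes power-boundedness of the affine propagator uniformly on that time scale, which near-resonances of the modified generator (Jordan blocks) can destroy; the paper's telescoping argument evaluates the almost invariant only across single steps of the numerical solution and so never needs the numerical orbit to shadow one modified trajectory over the whole interval $nh\le h^{2-N}$.
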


{\subsection{Numerical experiments}}
	In this part, by means of MATLAB, we present {three numerical experiments} to show the  long time  energy, momentum, and magnetic moment behaviour of the above splitting  methods. We choose Boris method for comparison, and {solve all the tests on $[0,10000]$ with the step size $h=0.01$ to show the long time conservations.} For IMS-O2, the fixed-point iteration is employed into implicit iteration, where the error tolerance is $10^{-16}$ and  the maximum number of each iteration is 50. In addition, we make use of Gauss-Legendre to deal with the nonlinear integral in \eqref{eq-IMS}. {Denote the relative errors of energy, momentum and magnetic moment respectively by}
	\begin{equation}\label{error} e_H:=\frac{\abs{H(x^n,v^n)-H(x^0,v^0)}}{\abs{H(x^0,v^0)}},\ \ e_M:
=\frac{\abs{M(x^n,v^n)-M(x^0,v^0)}}{\abs{M(x^0,v^0)}},\ \ e_I:=\frac{\abs{I(x^n,v^n)-I(x^0,v^0)}}{\abs{I(x^0,v^0)}}.
	\end{equation}
  {For the momentum $M(x,v)=(v+A(x))^\intercal Sx$, { throughout} this {subsection}, the skew-symmetric matrix is given as} $S=\begin{pmatrix}
			0 &1& 0\\
			-1 &0 &0\\
			0 &0 &0 \\
		\end{pmatrix}$
	, i.e. $M(x,v)=(v_1+A_1(x))x_2-(v_2+A_2(x))x_1.$
	
\begin{figure}[t!]
		\centering
		\begin{tabular}[c]{ccc}%
			\subfigure{\includegraphics[width=4.7cm,height=4.2cm]{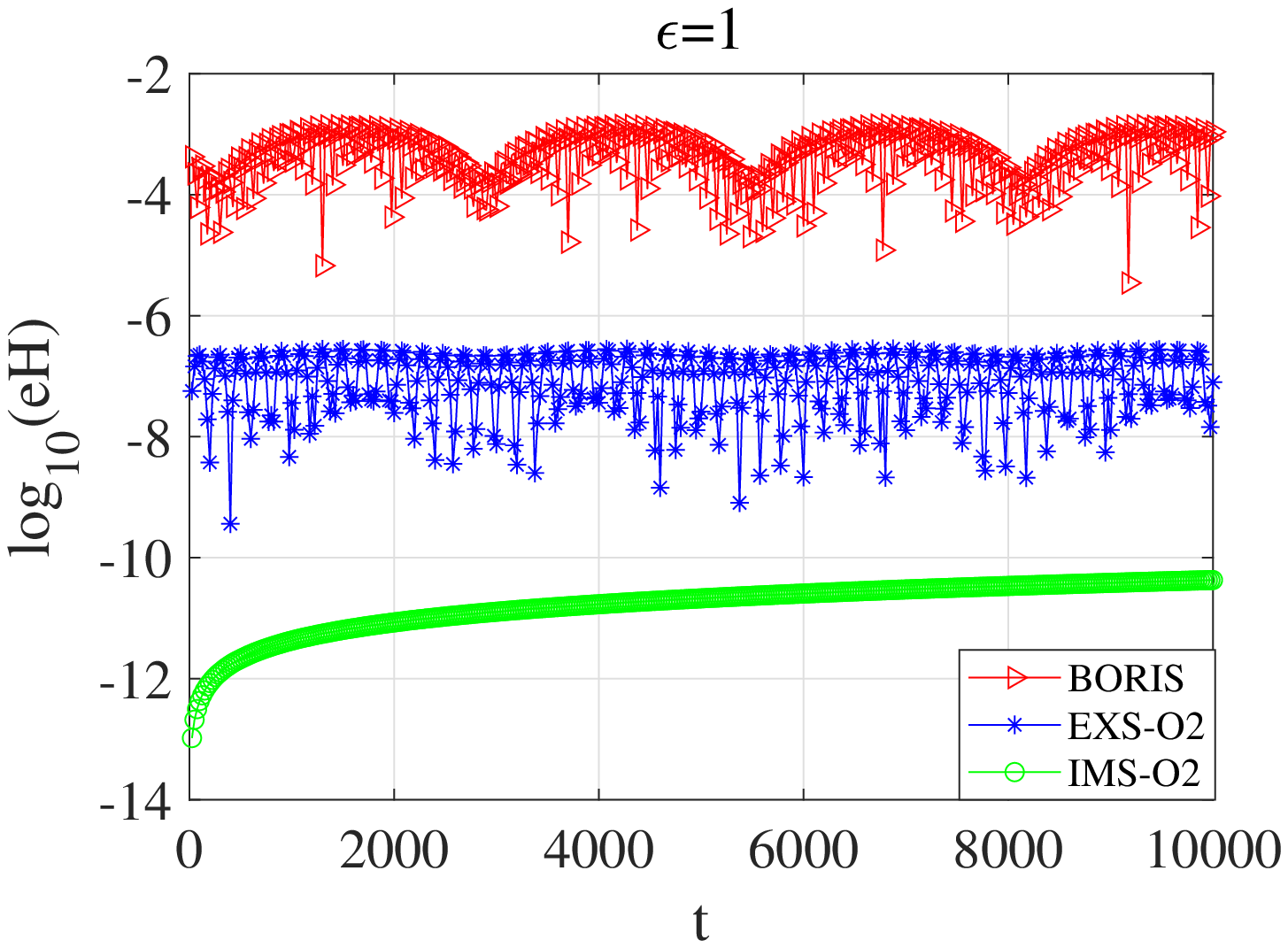}}			\subfigure{\includegraphics[width=4.7cm,height=4.2cm]{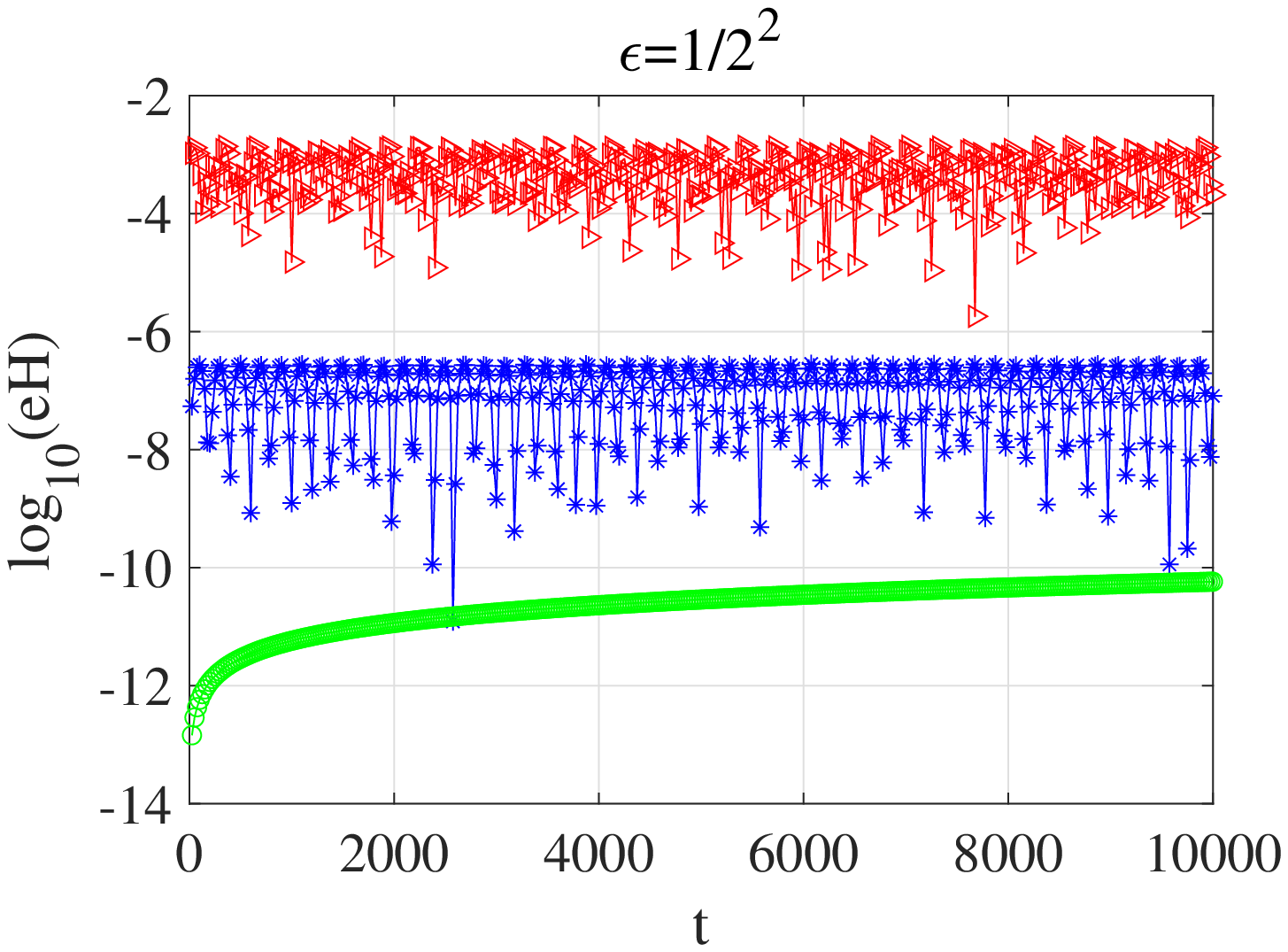}}
			\subfigure{\includegraphics[width=4.7cm,height=4.2cm]{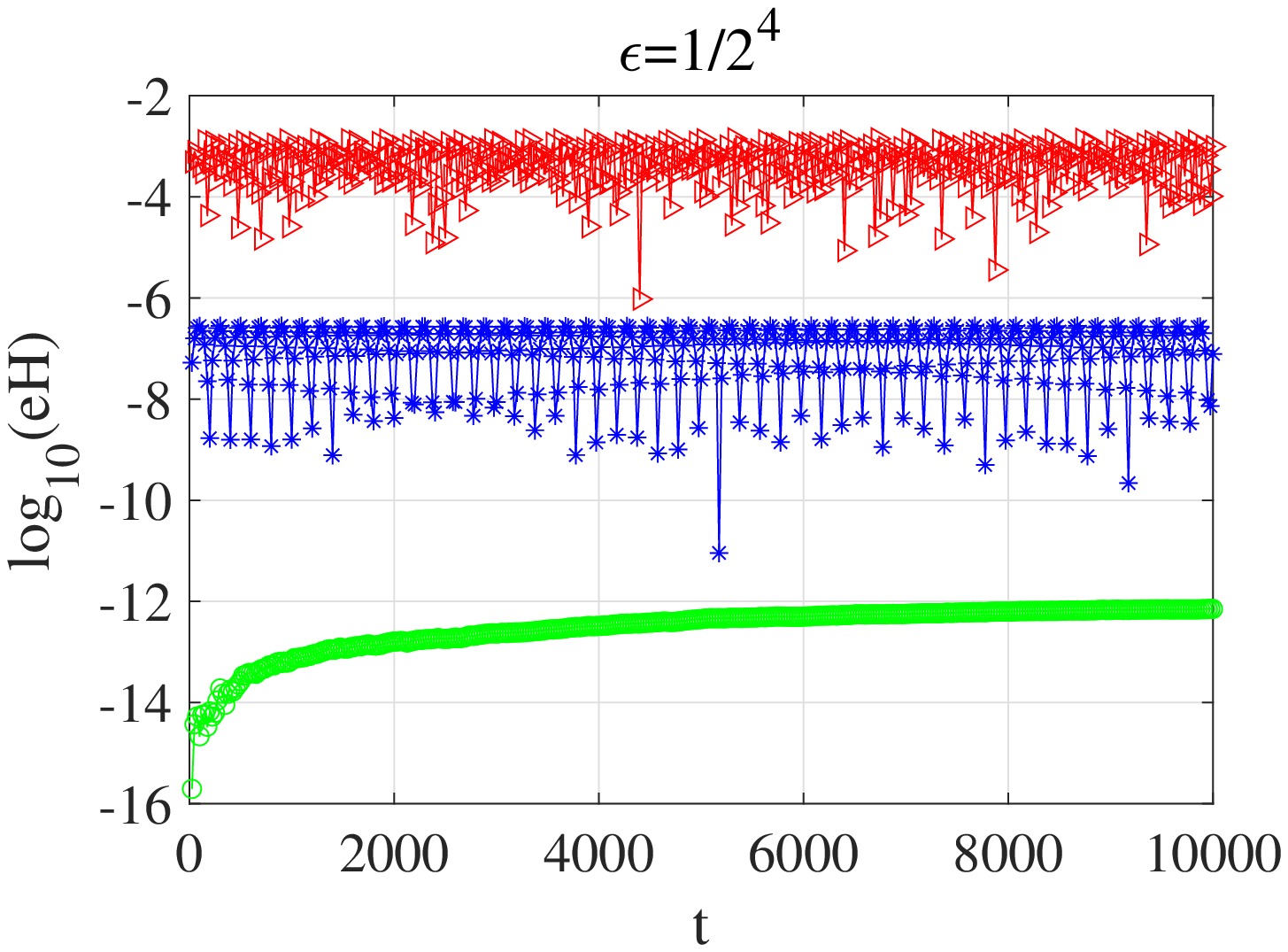}}		
		\end{tabular}
		\caption{{Problem 1.} Evolution of the energy error $e_H$ as function of time $t$. }\label{fig-eH3}
	\end{figure}

\begin{figure}[t!]
		\centering
		\begin{tabular}[c]{ccc}%
			\subfigure{\includegraphics[width=4.7cm,height=4.2cm]{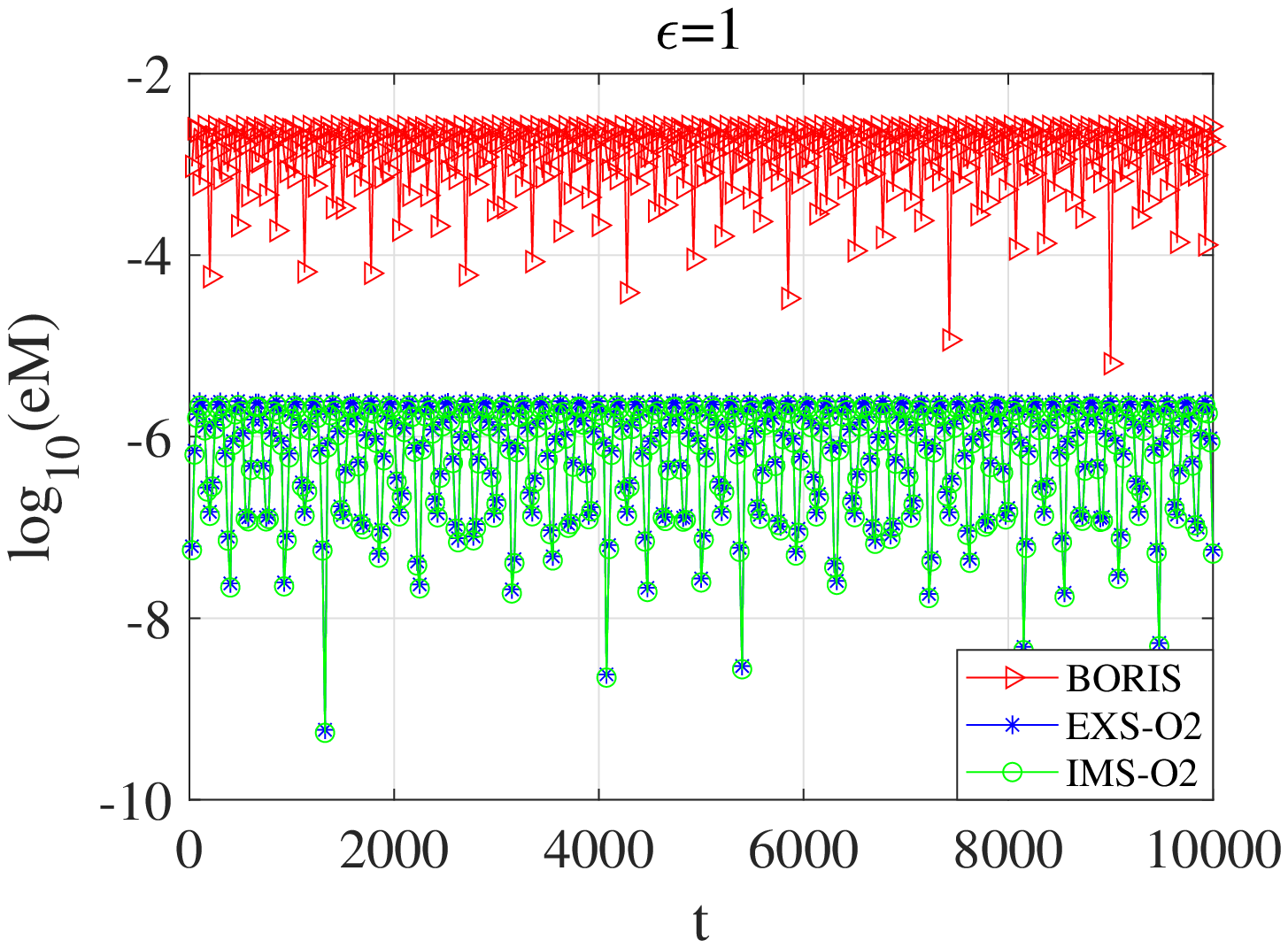}}			\subfigure{\includegraphics[width=4.7cm,height=4.2cm]{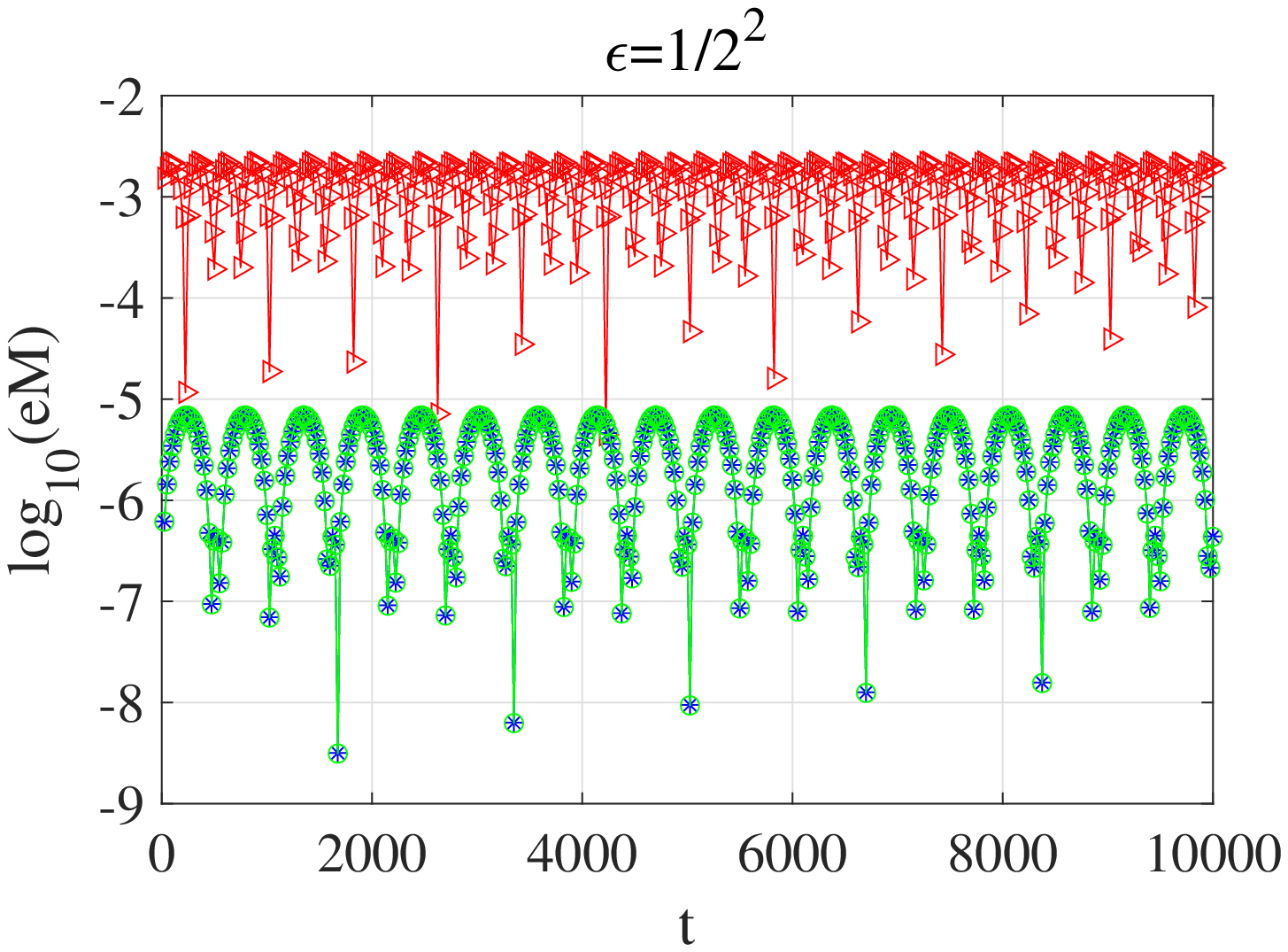}}
			\subfigure{\includegraphics[width=4.7cm,height=4.2cm]{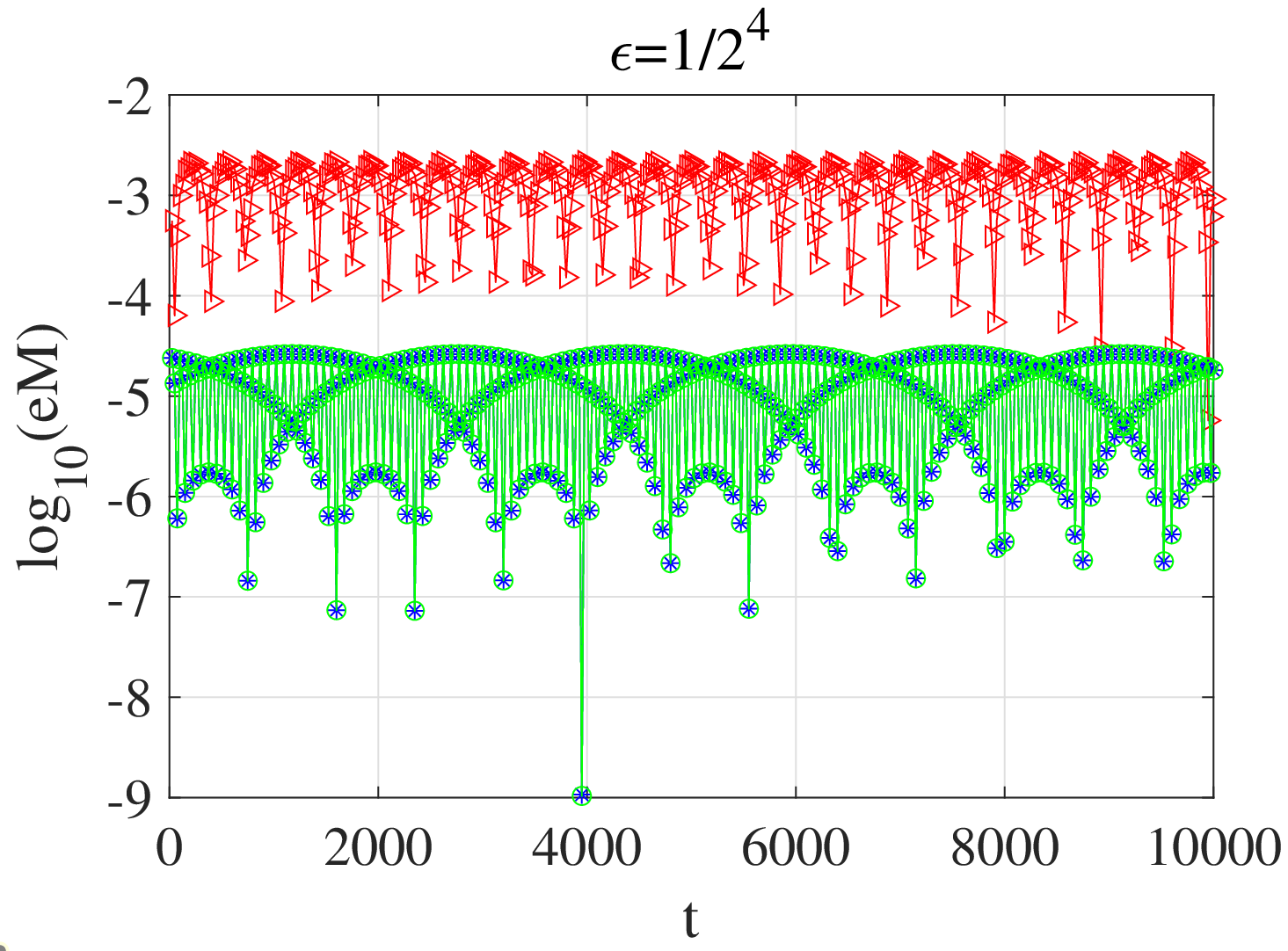}}		
		\end{tabular}
		\caption{{Problem 1.} Evolution of the energy error $e_M$ as function of time $t$. }\label{fig-eM3}
	\end{figure}

	\begin{figure}[t!]
		\centering
		\begin{tabular}[c]{ccc}%
			\subfigure{\includegraphics[width=4.7cm,height=4.2cm]{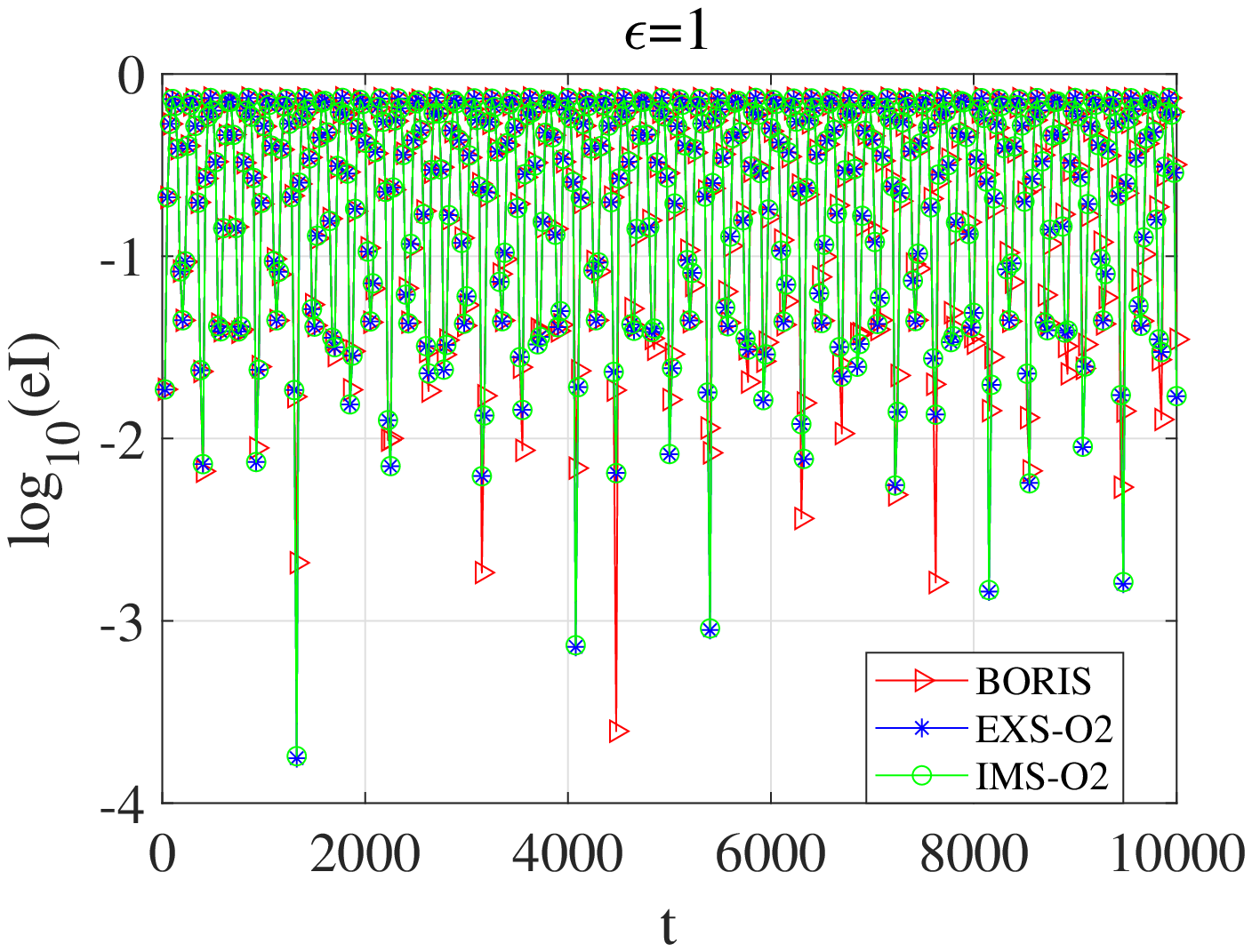}}			\subfigure{\includegraphics[width=4.7cm,height=4.2cm]{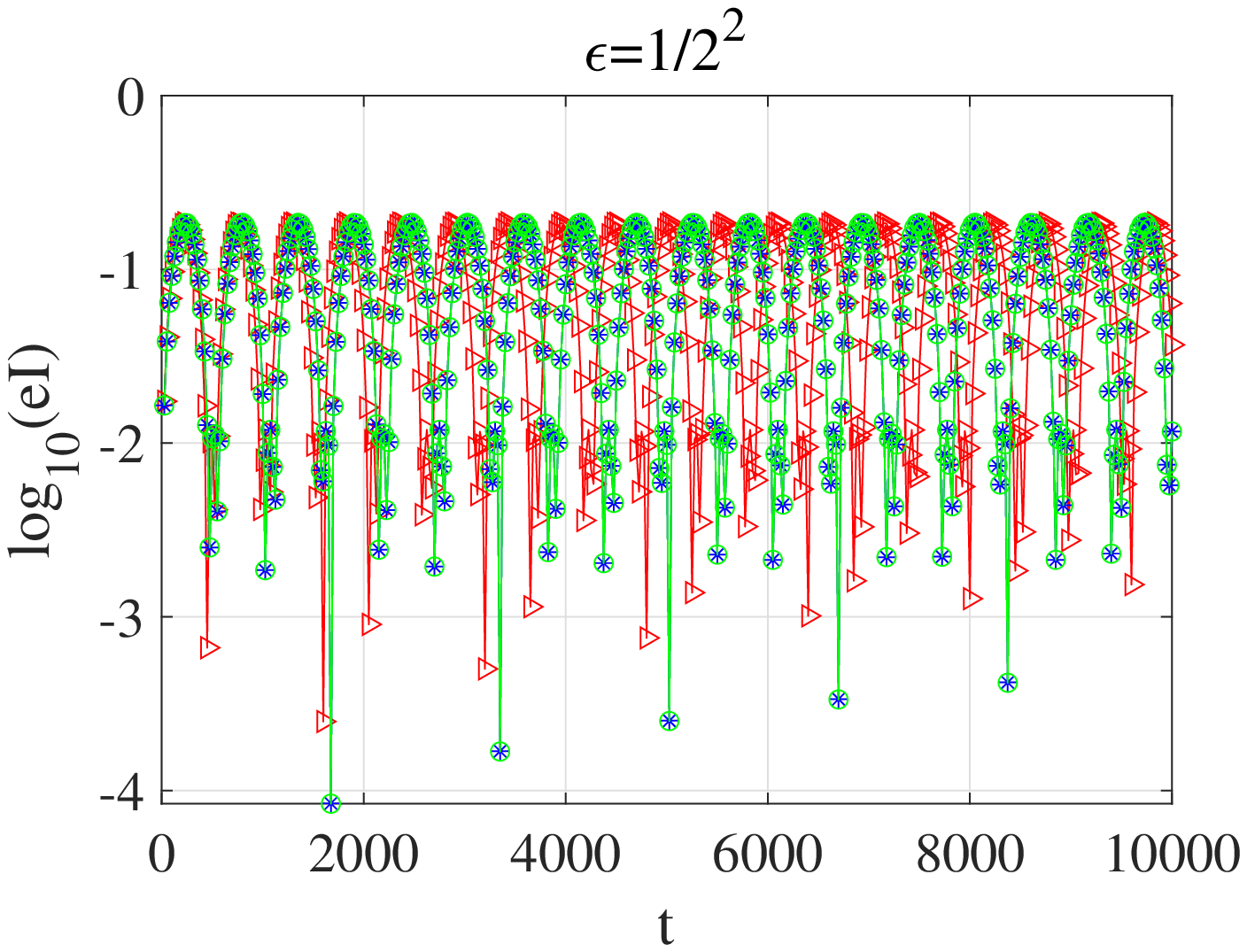}}
			\subfigure{\includegraphics[width=4.7cm,height=4.2cm]{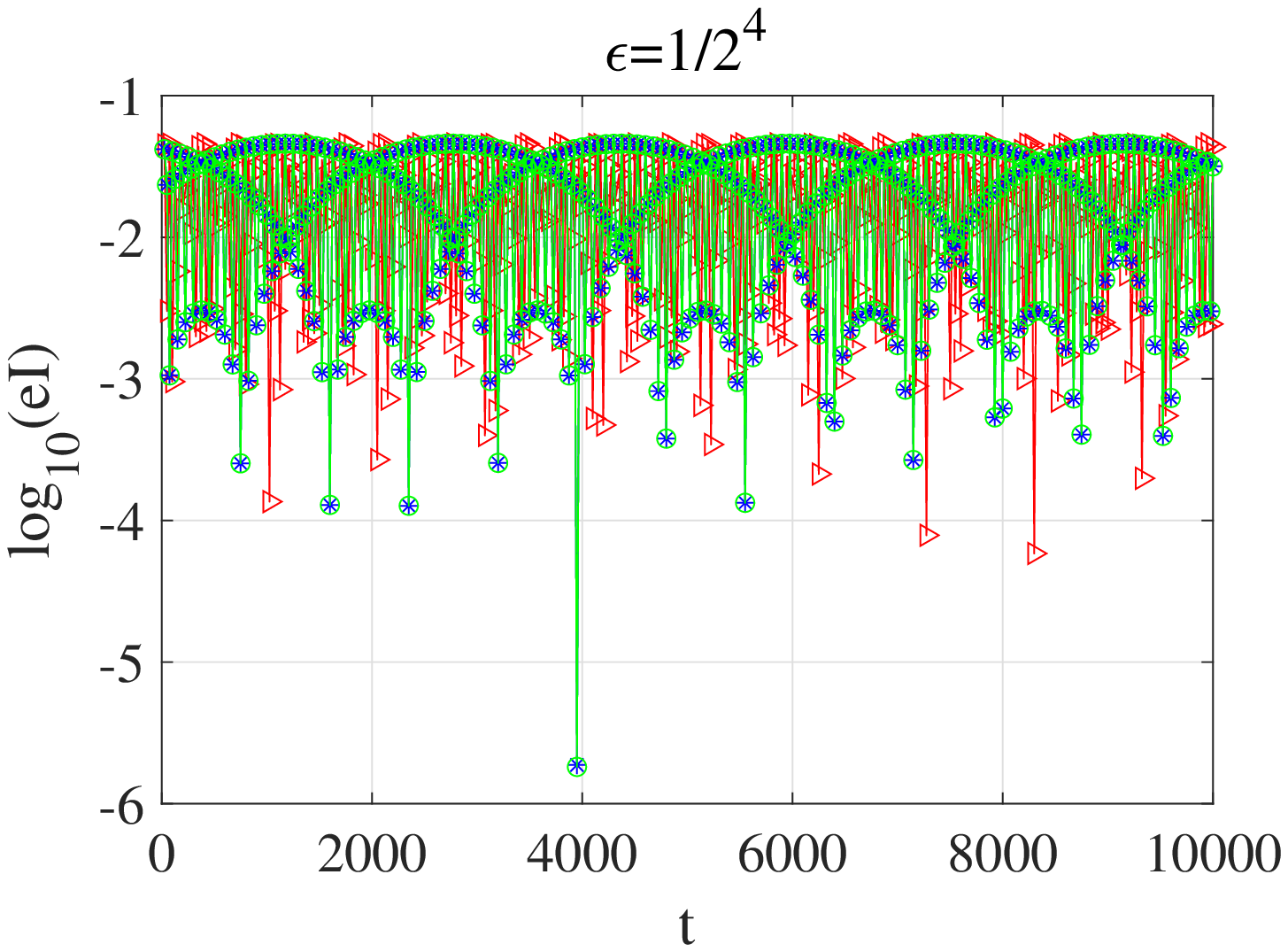}}		
		\end{tabular}
		\caption{{Problem 1.} Evolution of the energy error $e_I$ as function of time $t$. }\label{fig-eI3}
	\end{figure}
	
	\begin{problem}\textbf{({Quadratic scalar potential and constant magnetic field})}\label{prob1}
		We {first consider} the charged particle dynamics \eqref{CPD} with the quadratic scalar potential $U(x)=\frac{x_1^2+x_2^2+x_3^2}{100}$
		and {the constant magnetic field $B=-\nabla\times\frac{1}{2\eps}(x_2,-x_1,0)^\intercal={\frac{1}{\eps}}(0,0,1)^\intercal.$ {Particularly} when $\eps=1$, {we have} $\widehat{B}=\frac{\tilde{B}}{\abs{B}}=\tilde{B}=S$ and $QS=SQ$. The initial values are chosen as $x(0)=(0,1,0.1)^{\intercal}$ and
	$v(0)=(0.09,0.05,0.20)^{\intercal}$.} The errors in \eqref{error} are respectively displayed in {Figs.} \ref{fig-eH3}--\ref{fig-eI3}. In addition, {let  $e_{H_h}:=\frac{\abs{H_h(x^n,v^n)-H_h(x^0,v^0)}}{\abs{H_h(x^0,v^0)}}$ with $H_h$ \eqref{Hh}. The conservation of $e_{H_h}$ by EXS-O2 is displayed in Fig.} \ref{fig-eHh}.
	\end{problem}
	\begin{figure}[t!]
		\centering
		\begin{tabular}[c]{ccc}%
			\subfigure{\includegraphics[width=4.7cm,height=4.2cm]{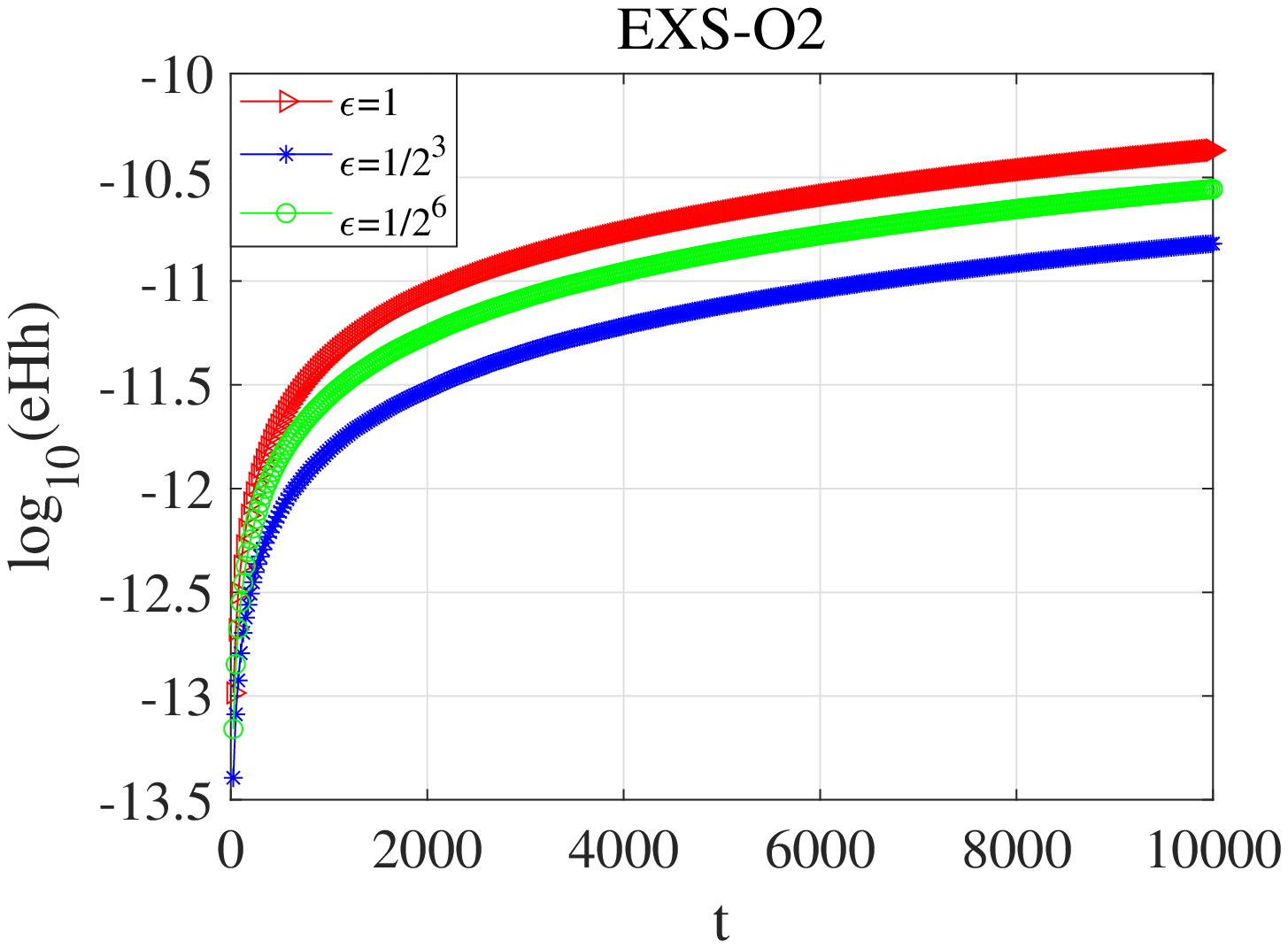}}			\subfigure{\includegraphics[width=4.7cm,height=4.2cm]{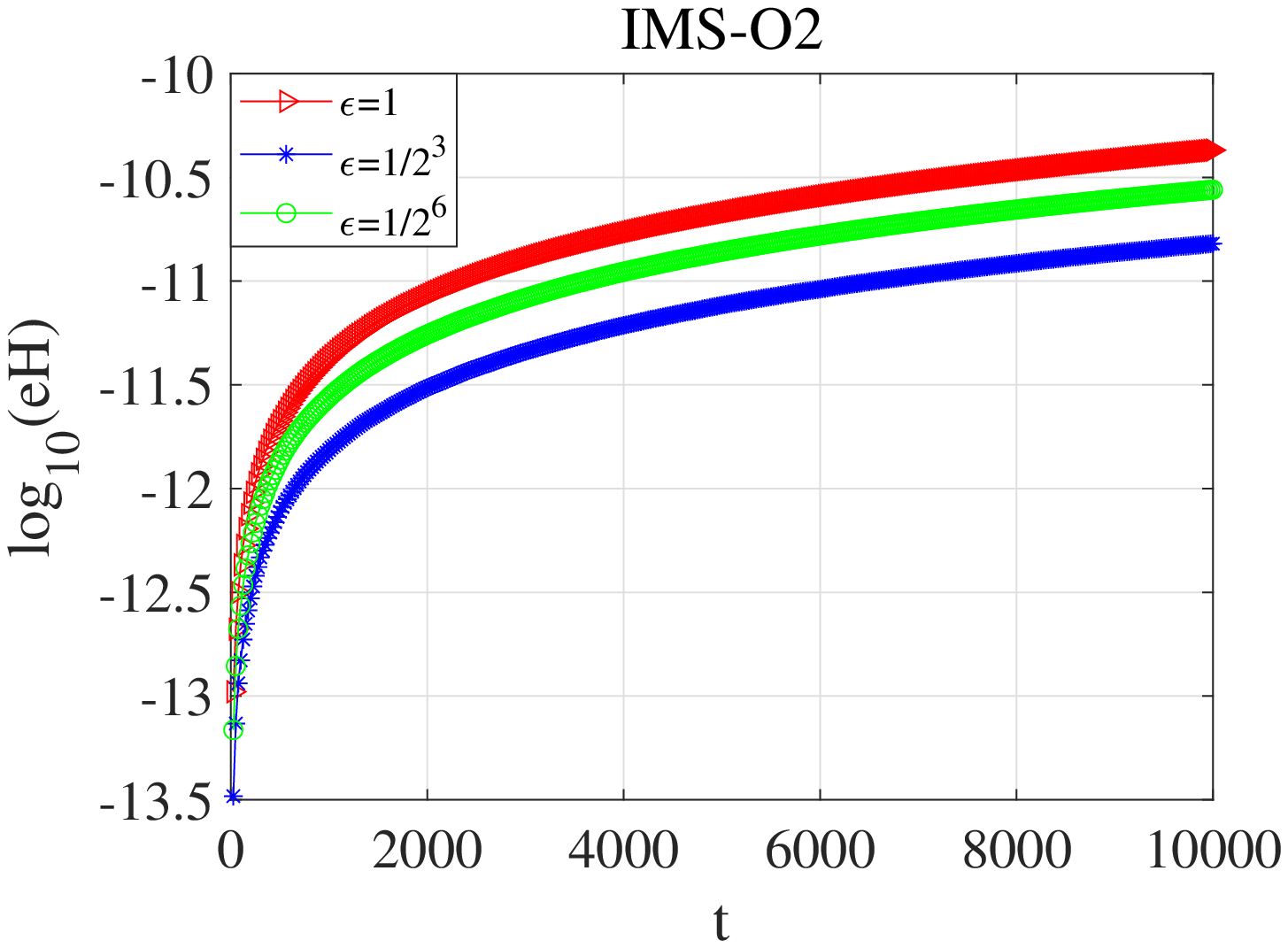}}		
		\end{tabular}
		\caption{{Problem 1.} Evolution of the energy error $e_{H_h}$ for EXS-O2 and $e_H$ for IMS-O2 as functions of time $t$ with $\eps=1,1/2^3,1/2^6$. }\label{fig-eHh}
	\end{figure}

	\begin{figure}[t!]
	\centering
	\begin{tabular}[c]{ccc}%
		\subfigure{\includegraphics[width=4.7cm,height=4.2cm]{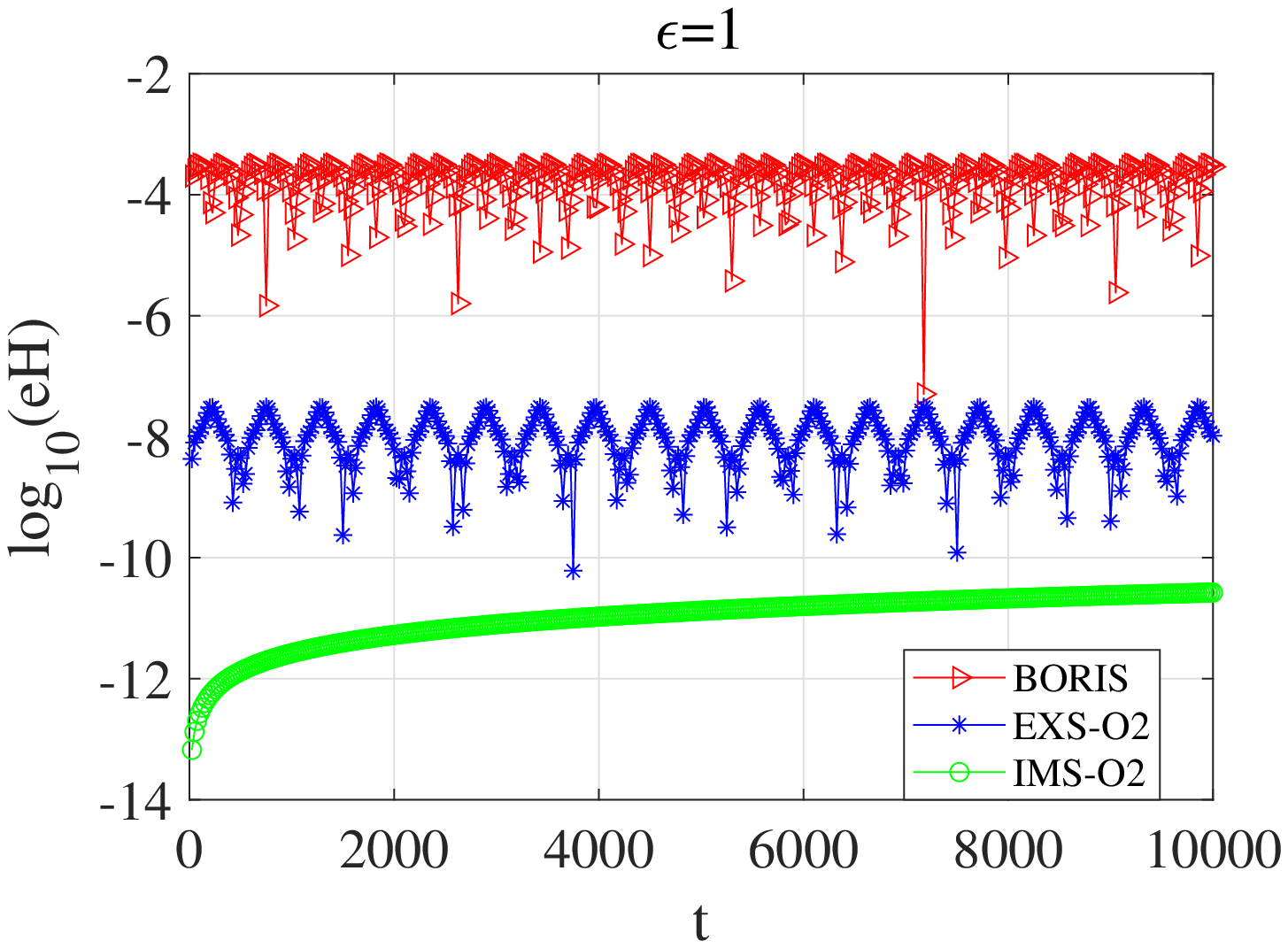}}			\subfigure{\includegraphics[width=4.7cm,height=4.2cm]{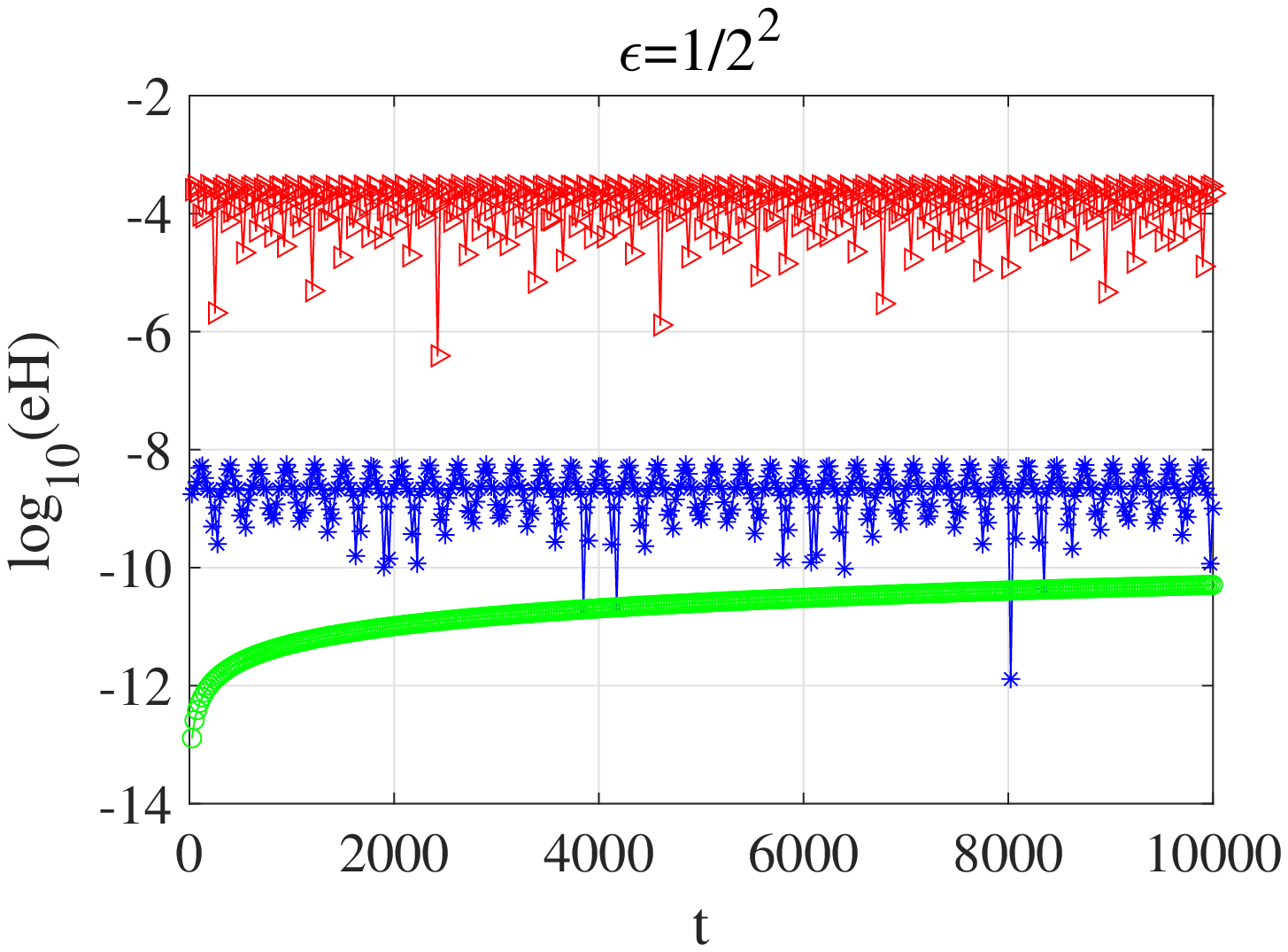}}
		\subfigure{\includegraphics[width=4.7cm,height=4.2cm]{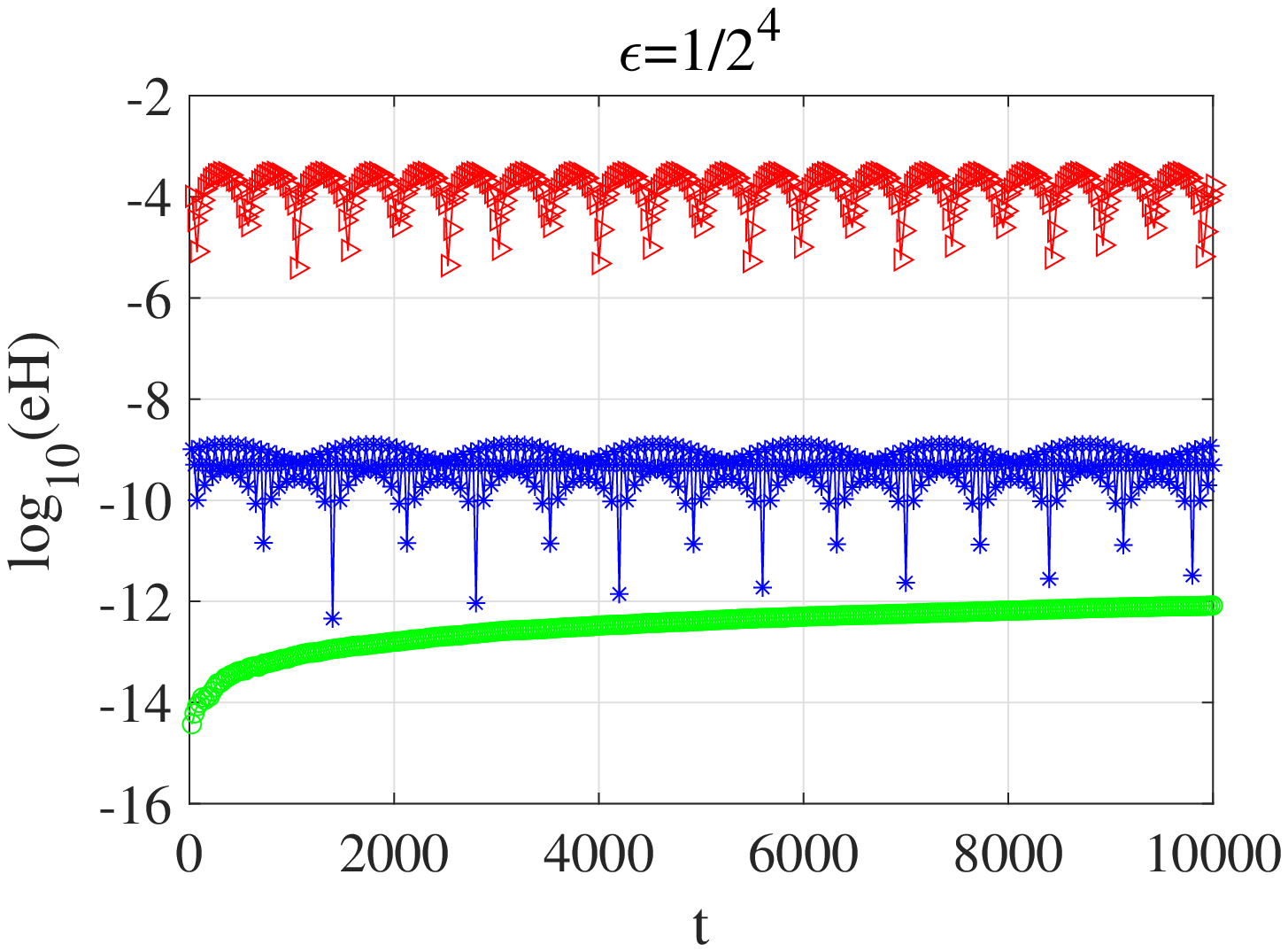}}		
	\end{tabular}
	\caption{{Problem 2.} Evolution of the energy error $e_H$ as function of time $t$. }\label{fig-eH2}
\end{figure}

    \begin{problem}\textbf{(Constant magnetic field)}\label{prob2}
    {As the second} problem, we consider a general scalar potential
$
		U(x)=\frac{1}{100\sqrt{x_1^2+x_2^2}}.$
The magnetic field and the initial values are the same as {those} in Problem \ref{prob1}. The errors in \eqref{error} are respectively {presented} in {Figs.} \ref{fig-eH2}$-$\ref{fig-eI2}.
    \end{problem}

	\begin{problem}\textbf{(General case)}\label{prob3}
		In the third numerical experiment, we {are interested in} a {general case that} the scalar potential is {the same as it in Problem \ref{prob2}}
		and the magnetic field is
		\begin{equation*}
			B(x)=\nabla\times\frac{1}{3\eps}(-x_2\sqrt{x_1^2+x_2^2},-x_1\sqrt{x_1^2+x_2^2},0)^\intercal={\frac{1}{\eps}}(0,0,\sqrt{x_1^2+x_2^2})^\intercal.
		\end{equation*}
		{The initial values are the same as {those} in Problem \ref{prob1}.} Then errors in \eqref{error} are {severally shown} in {Figs.} \ref{fig-eH1}$-$\ref{fig-eI1}. {Besides, in order to show the accuracy of the methods, we} plot the global errors $error:=\frac{\abs{x^n-x(t_n)}}{x(t_n)}+\frac{\abs{v^n-v(t_n)}}{v(t_n)}$ at $t_{end}=1$ in Fig \ref{fig-error}, where the reference solution is obtained by using “ode45” of MATLAB.
	\end{problem}

{In accordance with} these numerical results, we have the following observations.

\begin{figure}[t!]
	\centering
	\begin{tabular}[c]{ccc}%
		\subfigure{\includegraphics[width=4.7cm,height=4.2cm]{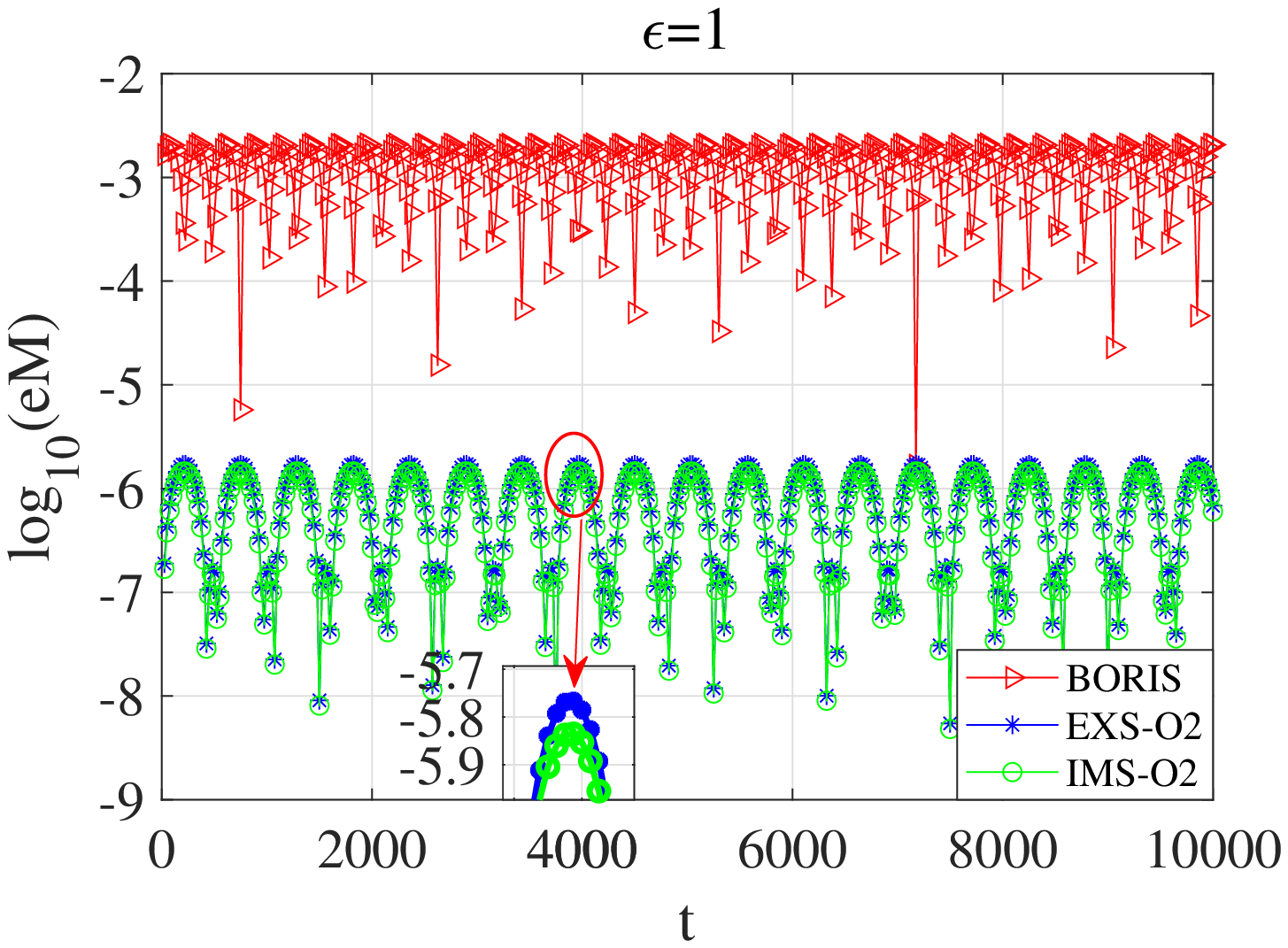}}			\subfigure{\includegraphics[width=4.7cm,height=4.2cm]{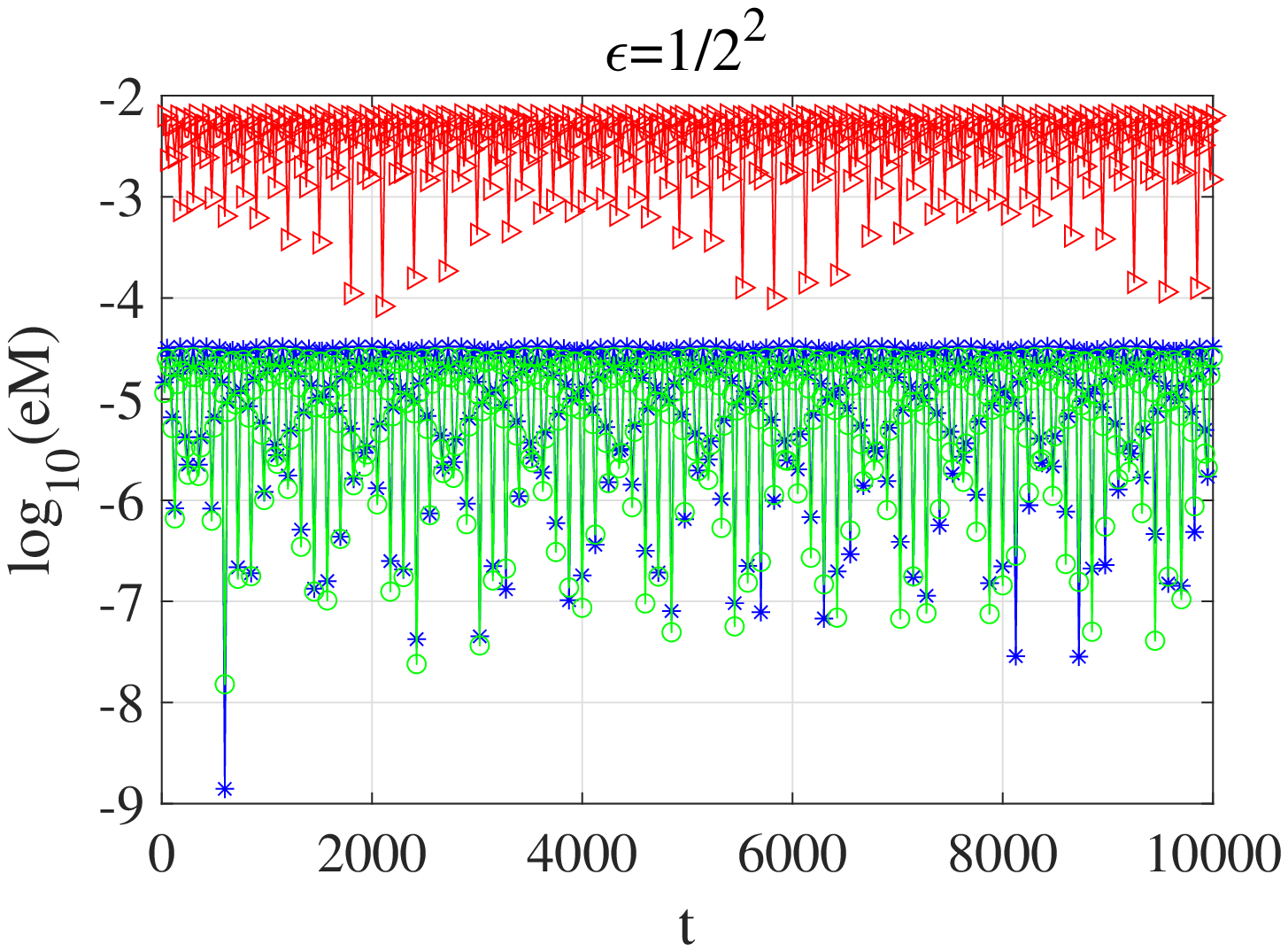}}
		\subfigure{\includegraphics[width=4.7cm,height=4.2cm]{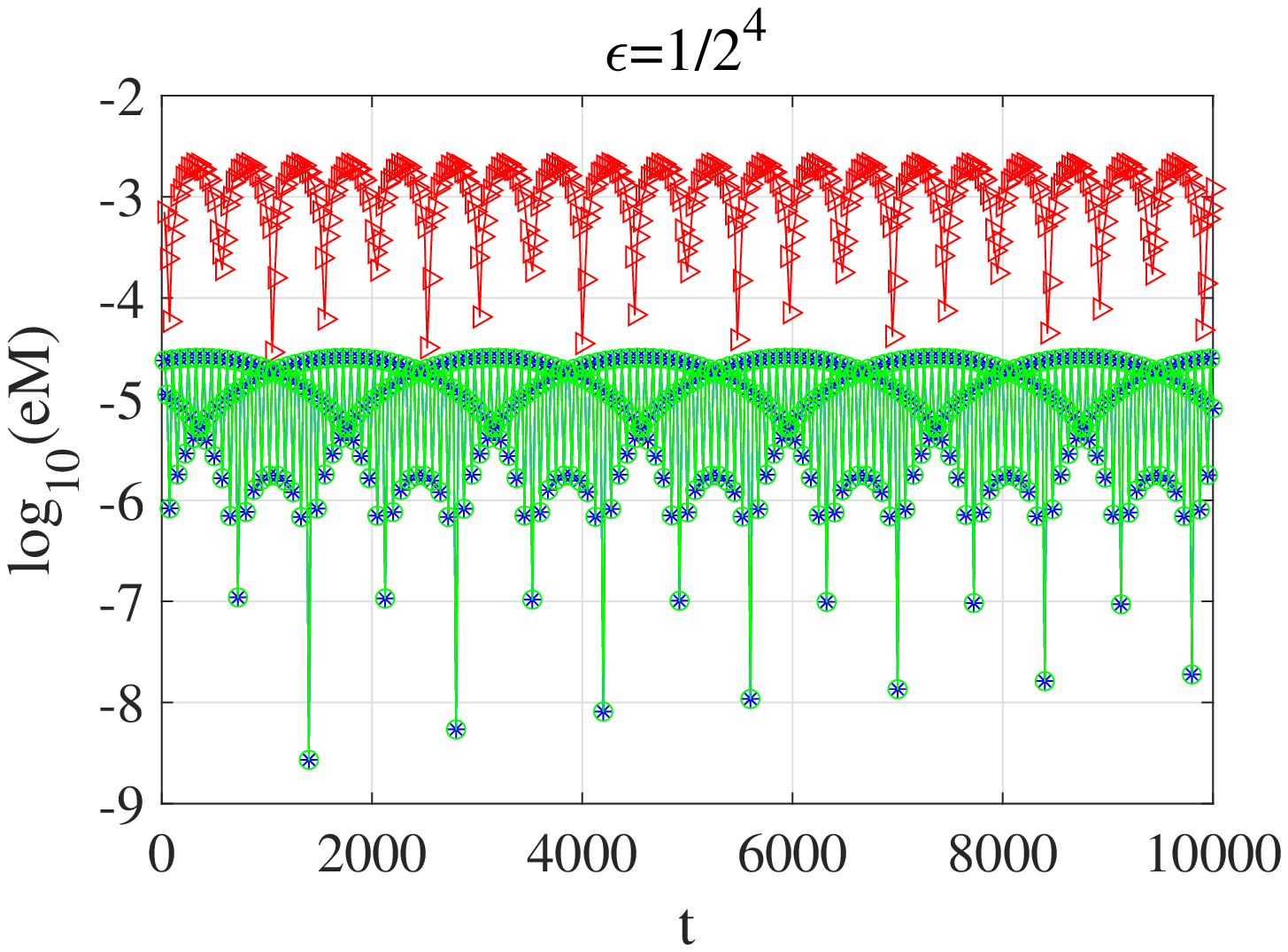}}		
	\end{tabular}
	\caption{{Problem 2.} Evolution of the energy error $e_M$ as function of time $t$. }\label{fig-eM2}
\end{figure}
\begin{figure}[t!]
	\centering
	\begin{tabular}[c]{ccc}%
		\subfigure{\includegraphics[width=4.7cm,height=4.2cm]{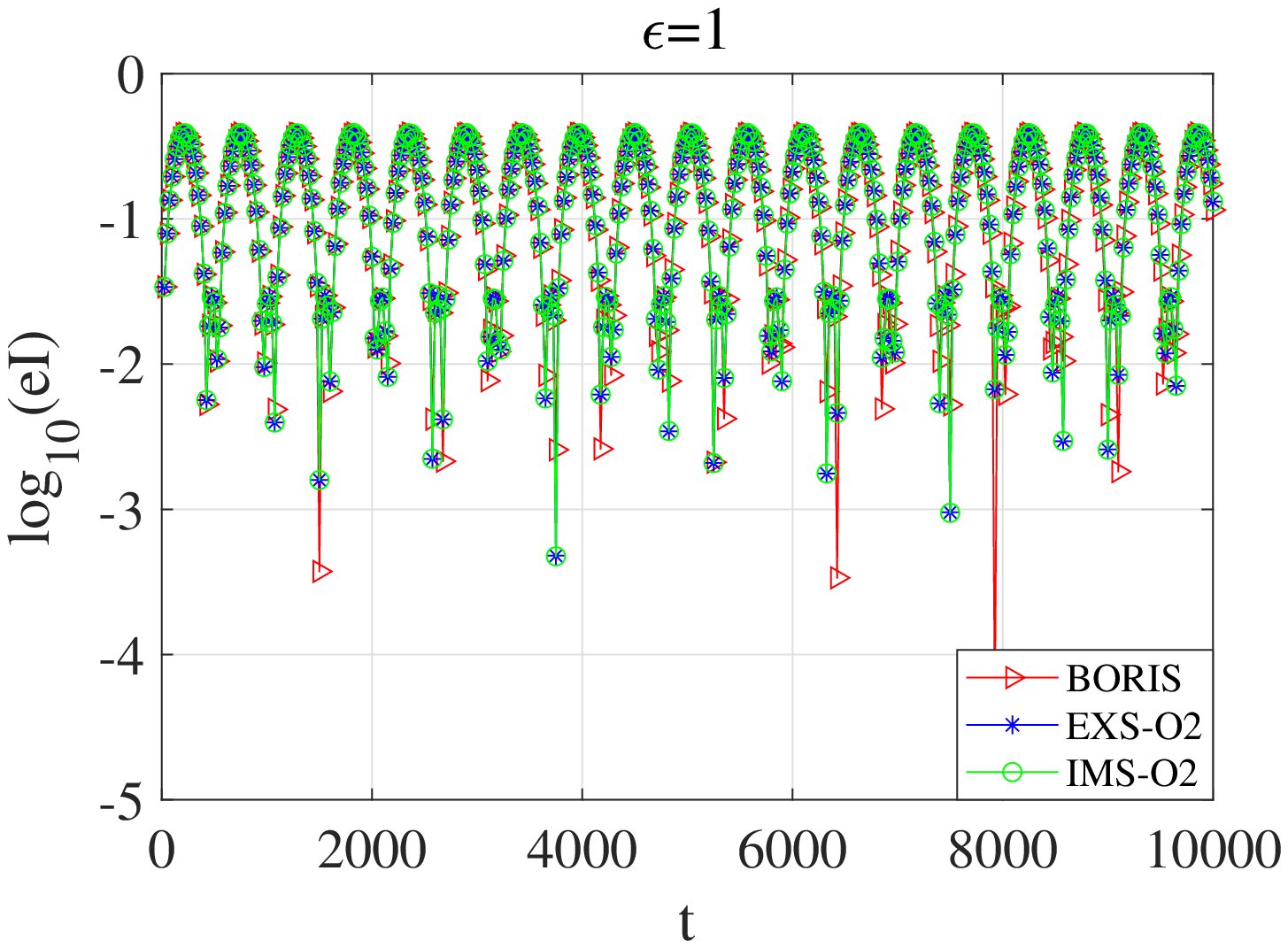}}			\subfigure{\includegraphics[width=4.7cm,height=4.2cm]{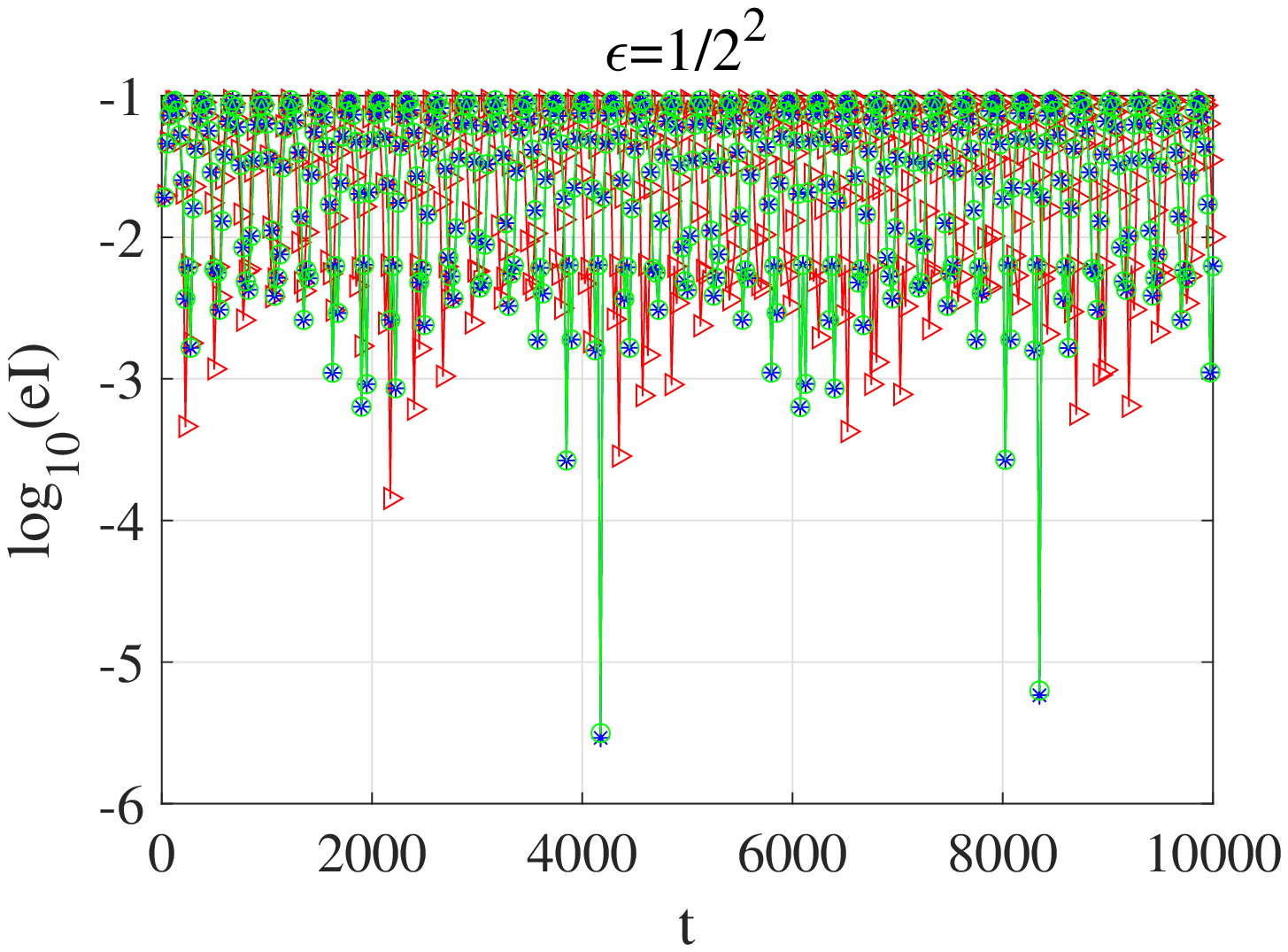}}
		\subfigure{\includegraphics[width=4.7cm,height=4.2cm]{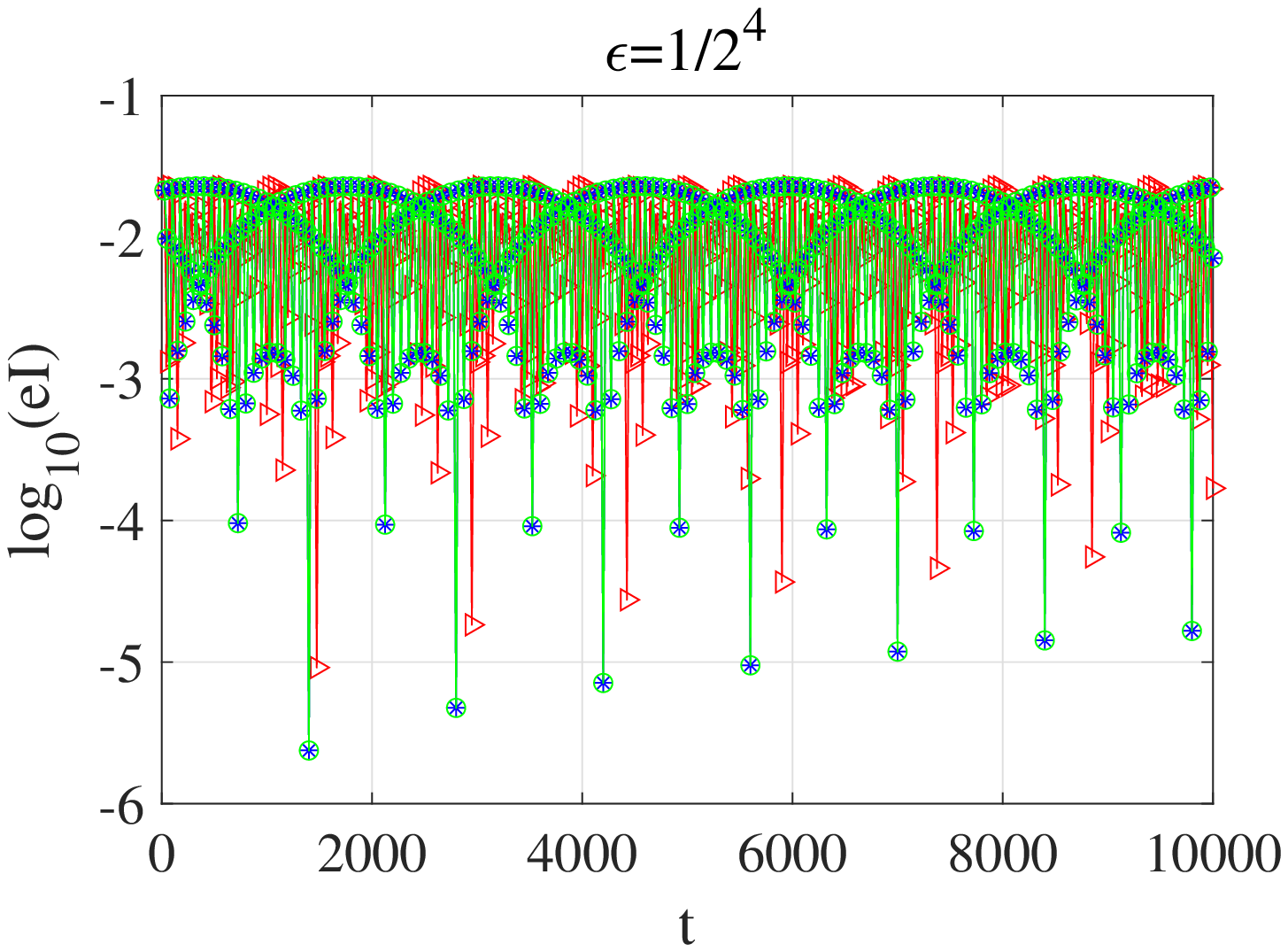}}		
	\end{tabular}
	\caption{{Problem 2.} Evolution of the energy error $e_I$ as function of time $t$. }\label{fig-eI2}
\end{figure}

	$\bullet$ \textbf{Energy conservation}.  Clearly from Figs.  \ref{fig-eH3}, \ref{fig-eH2} and \ref{fig-eH1},  we can see that  only IMS-O2 is energy-preserving,  Boris and EXS-O2 show a long-term energy behavior, and EXS-O2 behaves better than Boris. Meanwhile, Fig \ref{fig-eHh} demonstrates that  EXS-O2 exactly {preserves} the energy $H_h$.
%
%
	
	$\bullet$ \textbf{Momentum conservation}. {In the light of the results given by Figs. \ref{fig-eM3}, \ref{fig-eM2} and \ref{fig-eM1},  all the three methods are not momentum-preserving but they {hold} a long time momentum conservation.
 Besides, the behaviour of our methods {is better} than the Boris algorithm. }
	
	\begin{figure}[t!]
	\centering
	\begin{tabular}[c]{ccc}%
		\subfigure{\includegraphics[width=4.7cm,height=4.2cm]{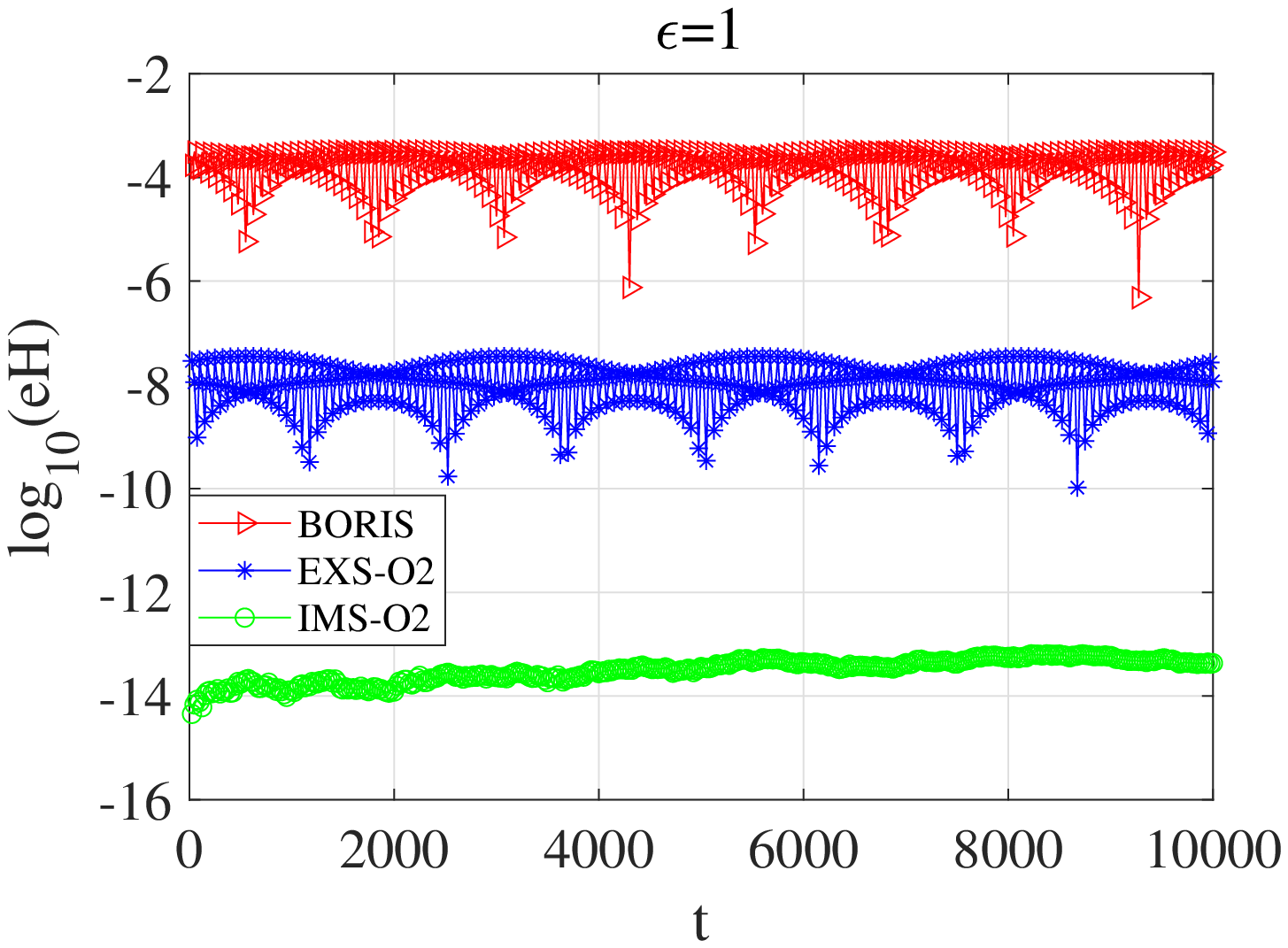}}			\subfigure{\includegraphics[width=4.7cm,height=4.2cm]{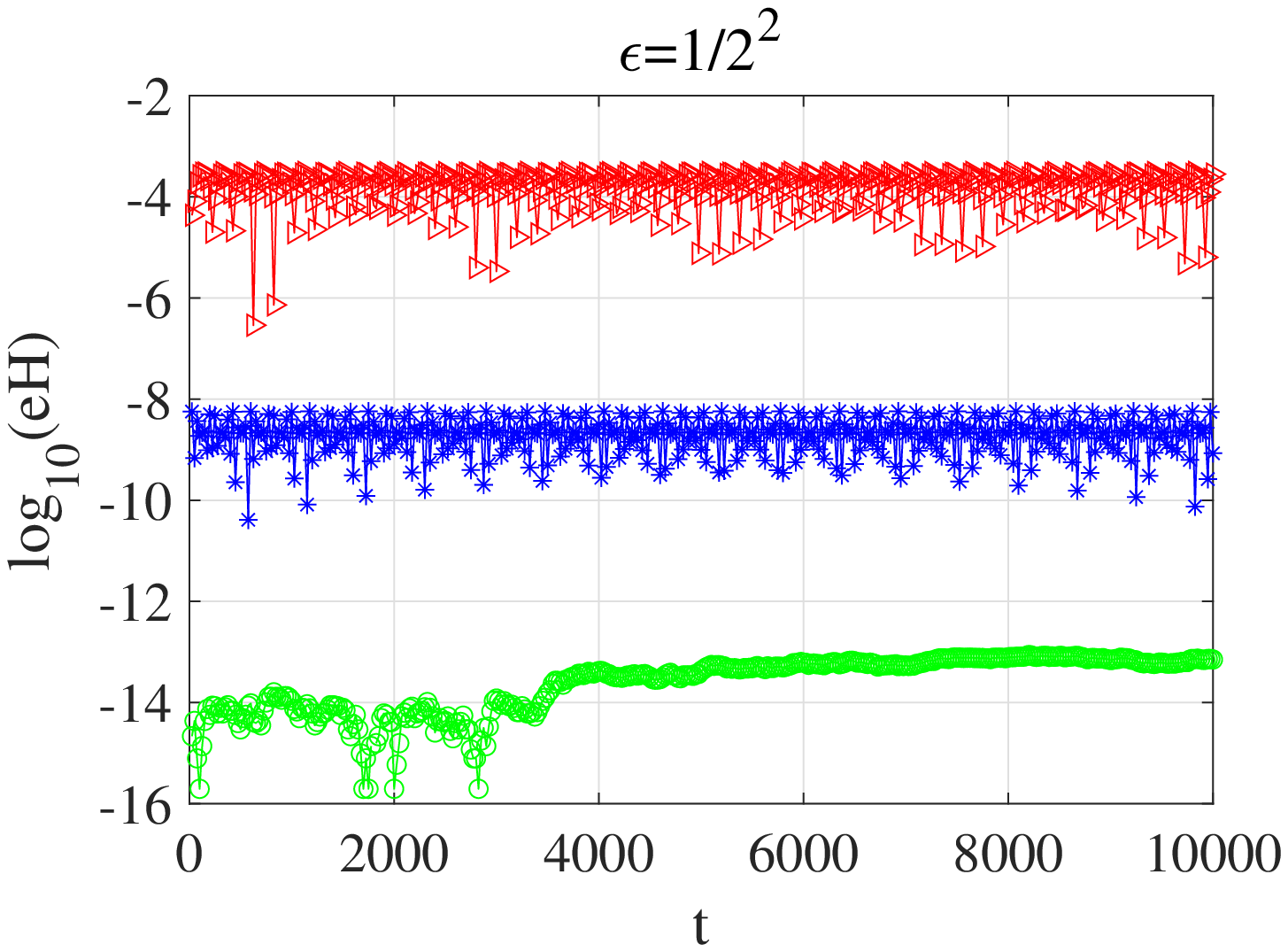}}
		\subfigure{\includegraphics[width=4.7cm,height=4.2cm]{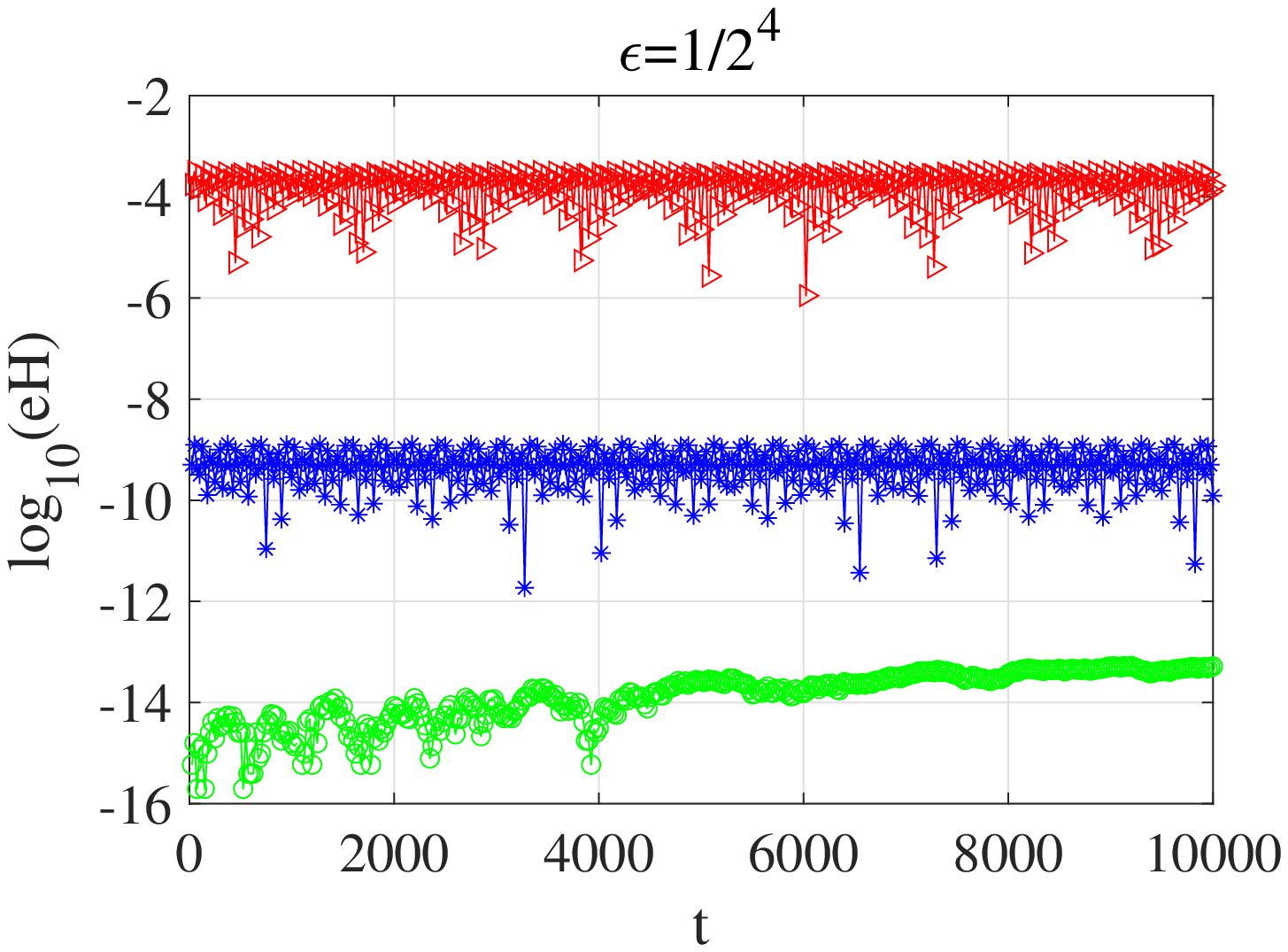}}		
	\end{tabular}
	\caption{{Problem 3.} Evolution of the energy error $e_H$ as function of time $t$. }\label{fig-eH1}
\end{figure}
\begin{figure}[t!]
	\centering
	\begin{tabular}[c]{ccc}%
		\subfigure{\includegraphics[width=4.7cm,height=4.2cm]{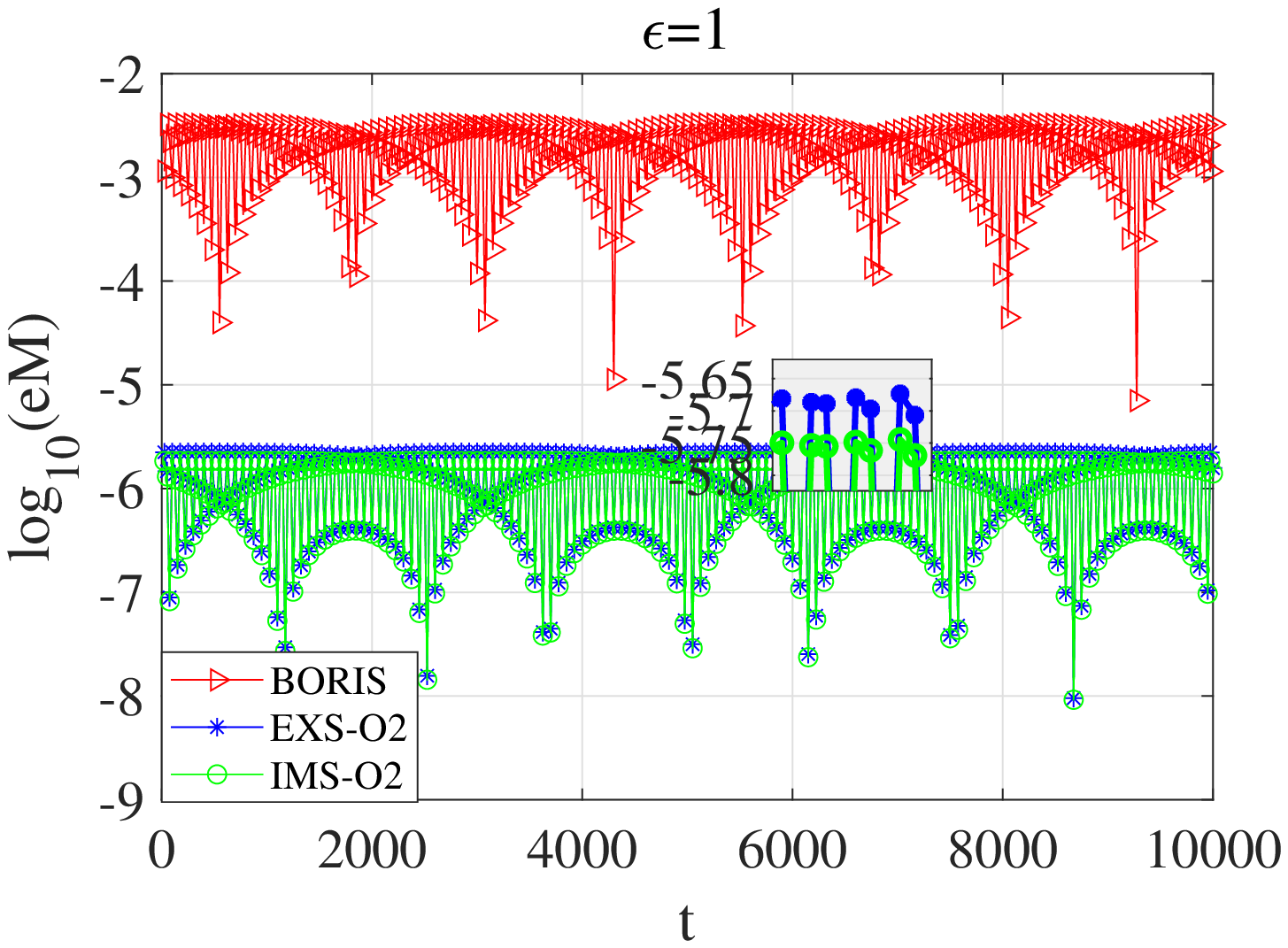}}			\subfigure{\includegraphics[width=4.7cm,height=4.2cm]{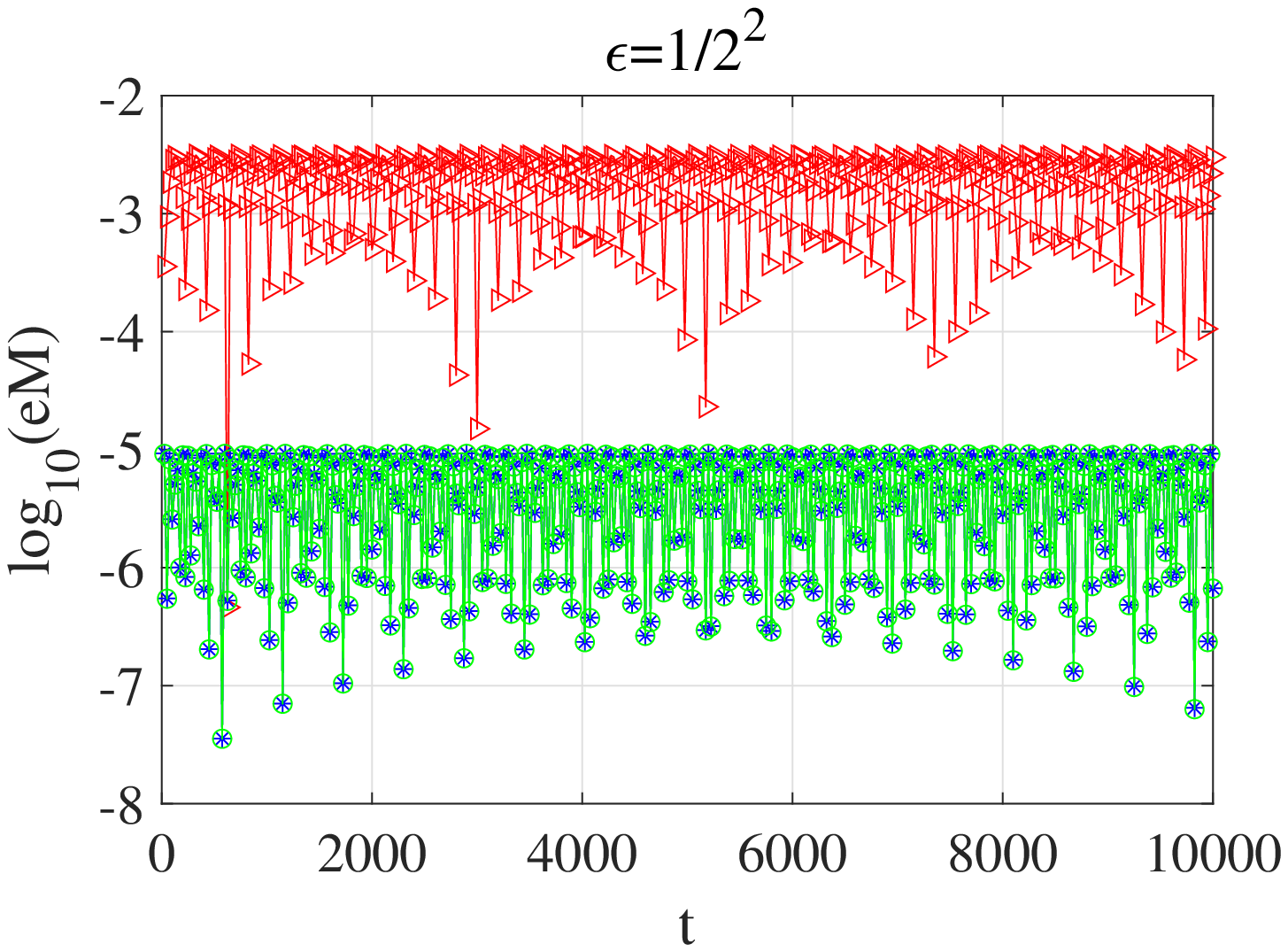}}
		\subfigure{\includegraphics[width=4.7cm,height=4.2cm]{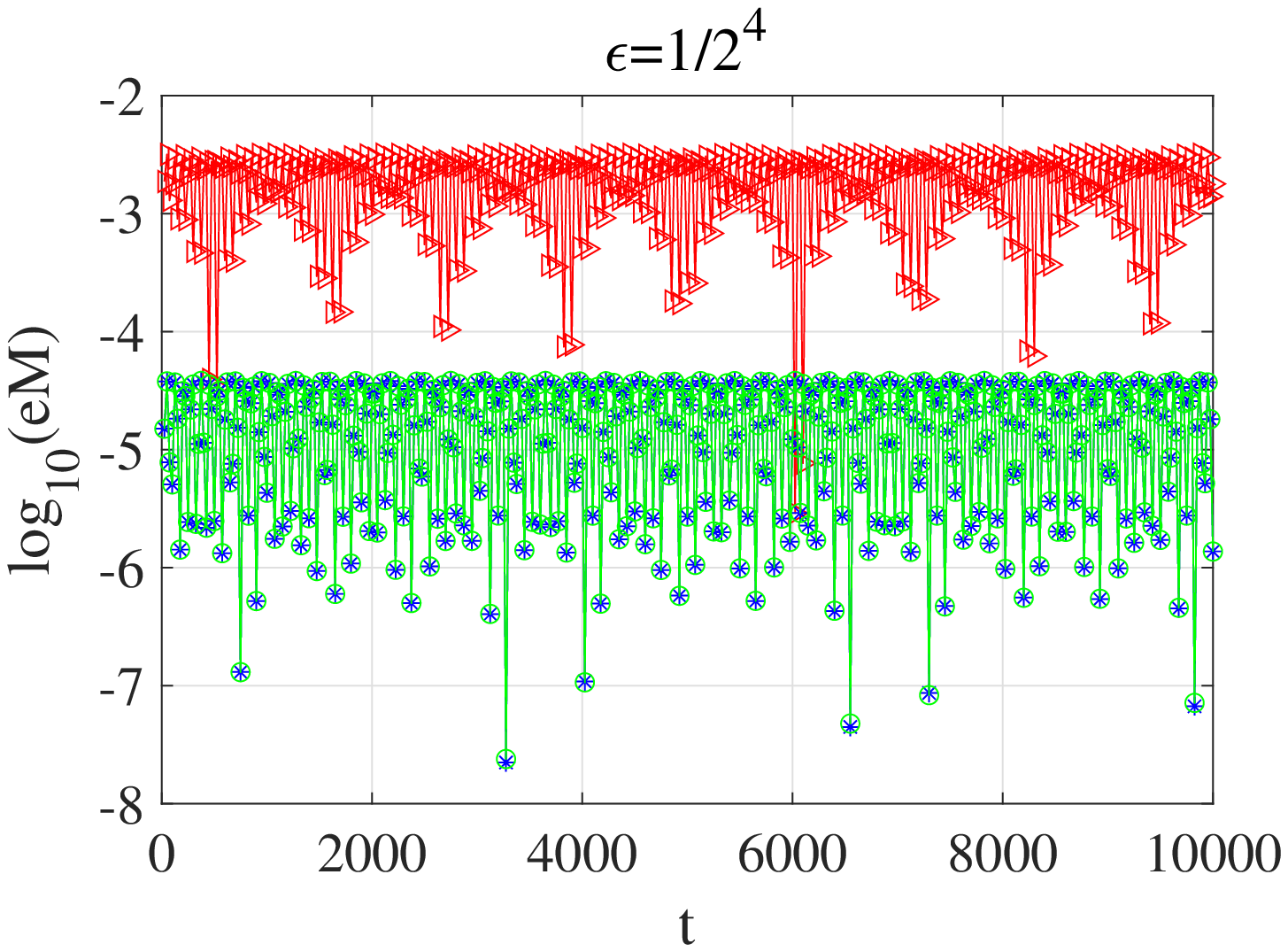}}		
	\end{tabular}
	\caption{{Problem 3.} Evolution of the energy error $e_M$ as function of time $t$. }\label{fig-eM1}
\end{figure}

\begin{figure}[t!]
	\centering
	\begin{tabular}[c]{ccc}%
		\subfigure{\includegraphics[width=4.7cm,height=4.2cm]{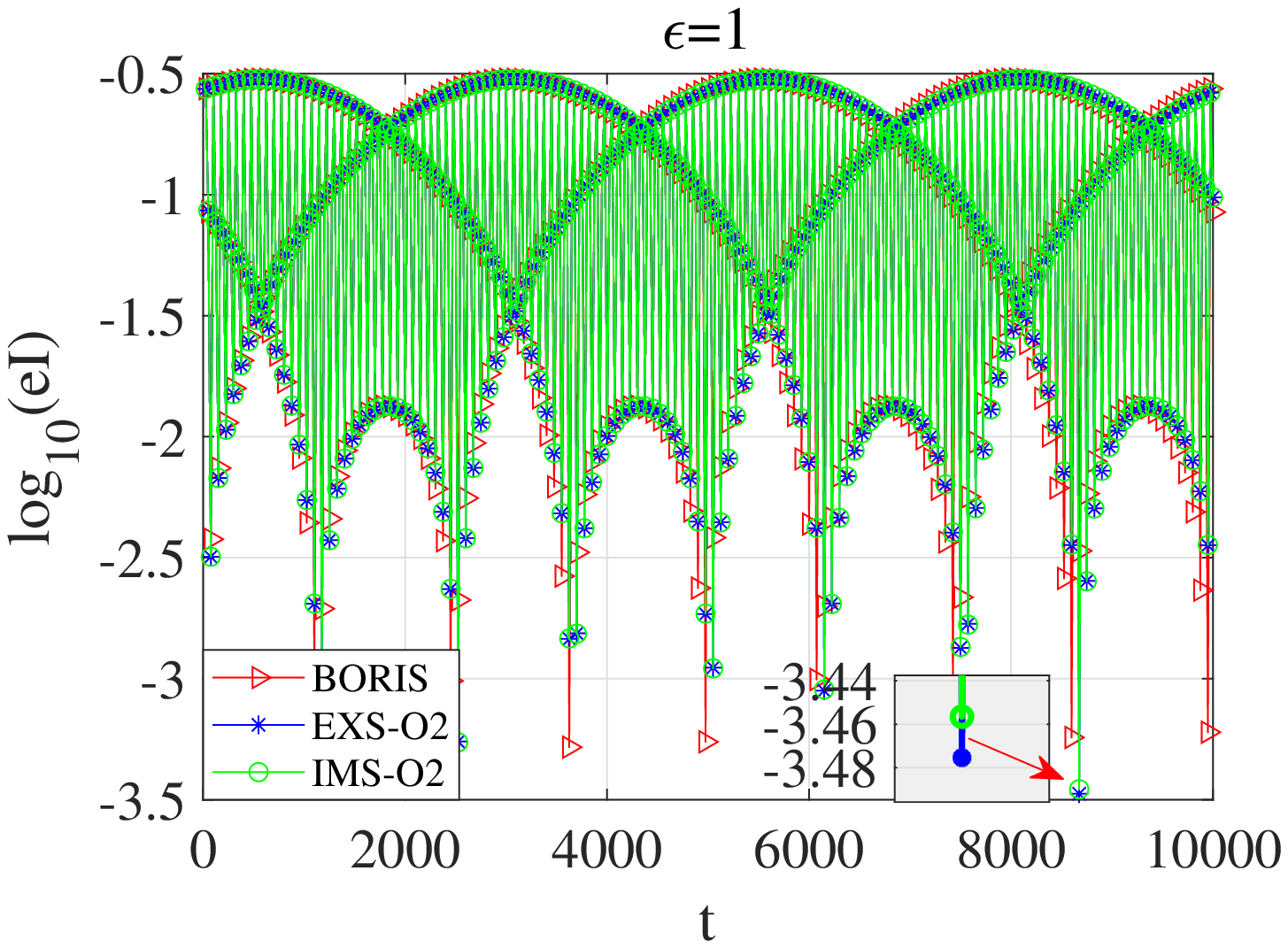}}			\subfigure{\includegraphics[width=4.7cm,height=4.2cm]{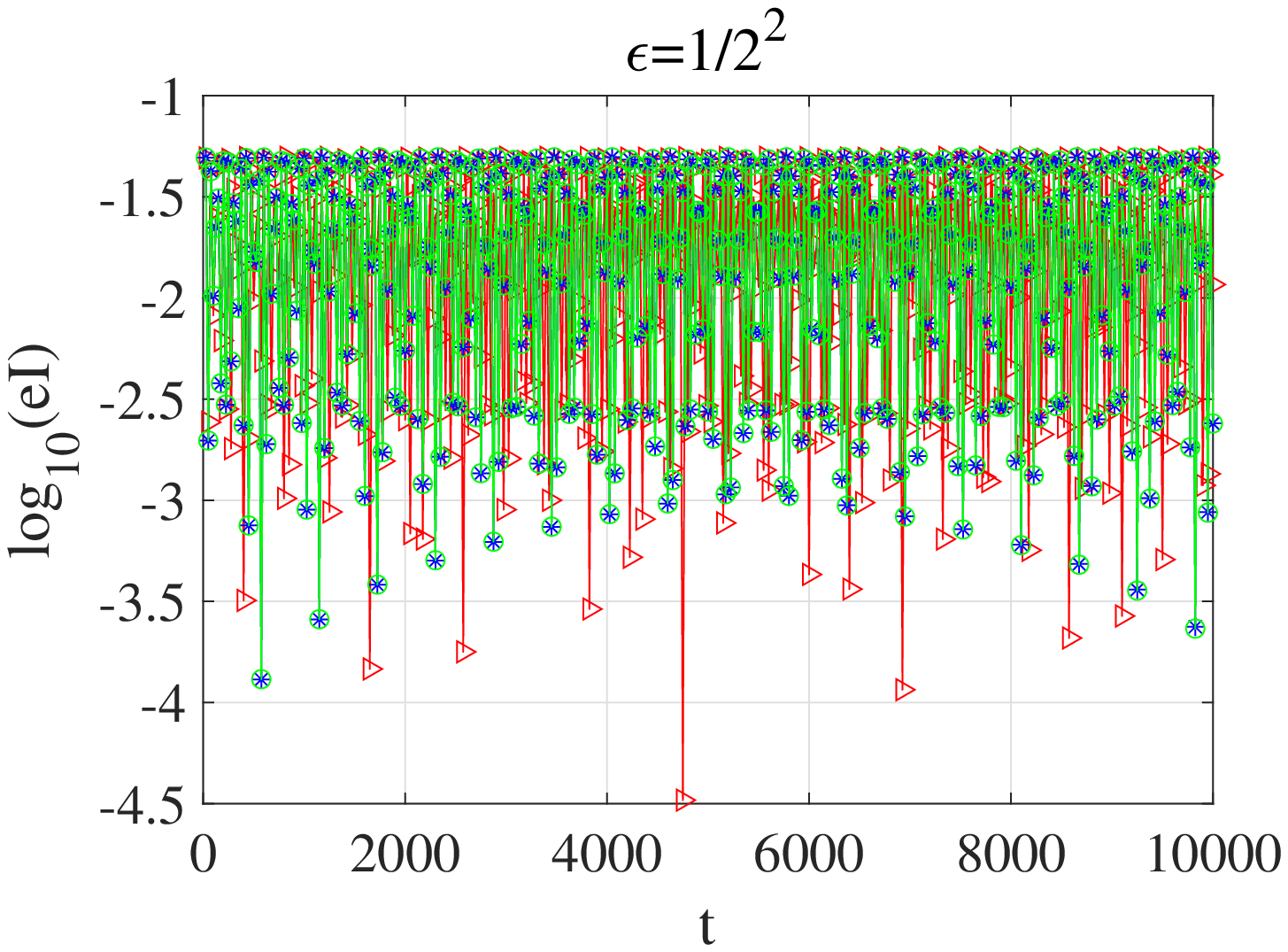}}
		\subfigure{\includegraphics[width=4.7cm,height=4.2cm]{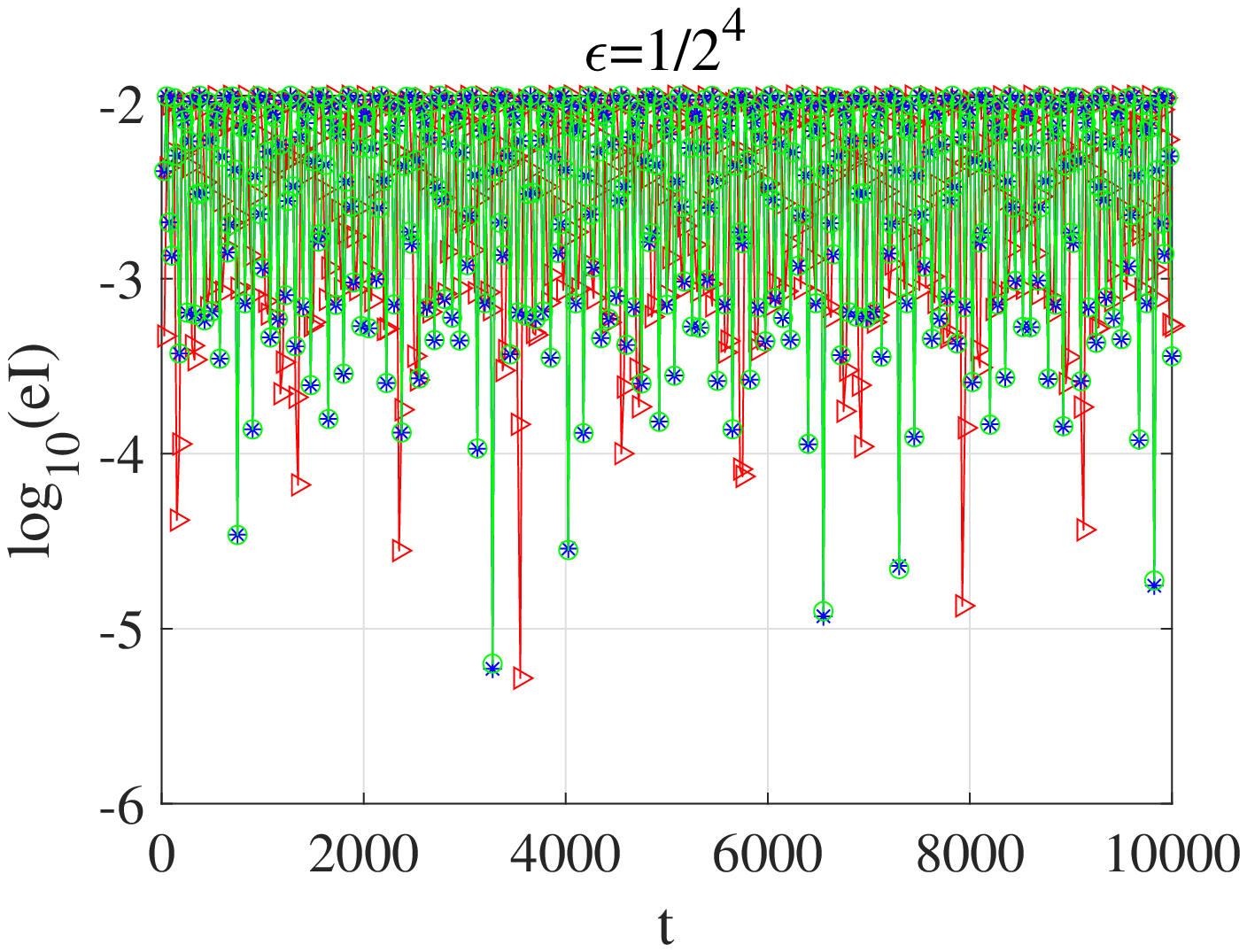}}		
	\end{tabular}
	\caption{{Problem 3.} Evolution of the energy error $e_I$ as function of time $t$. }\label{fig-eI1}
\end{figure}
\begin{figure}[t!]
	\centering
	\begin{tabular}[c]{ccc}%
		\subfigure{\includegraphics[width=4.7cm,height=4.2cm]{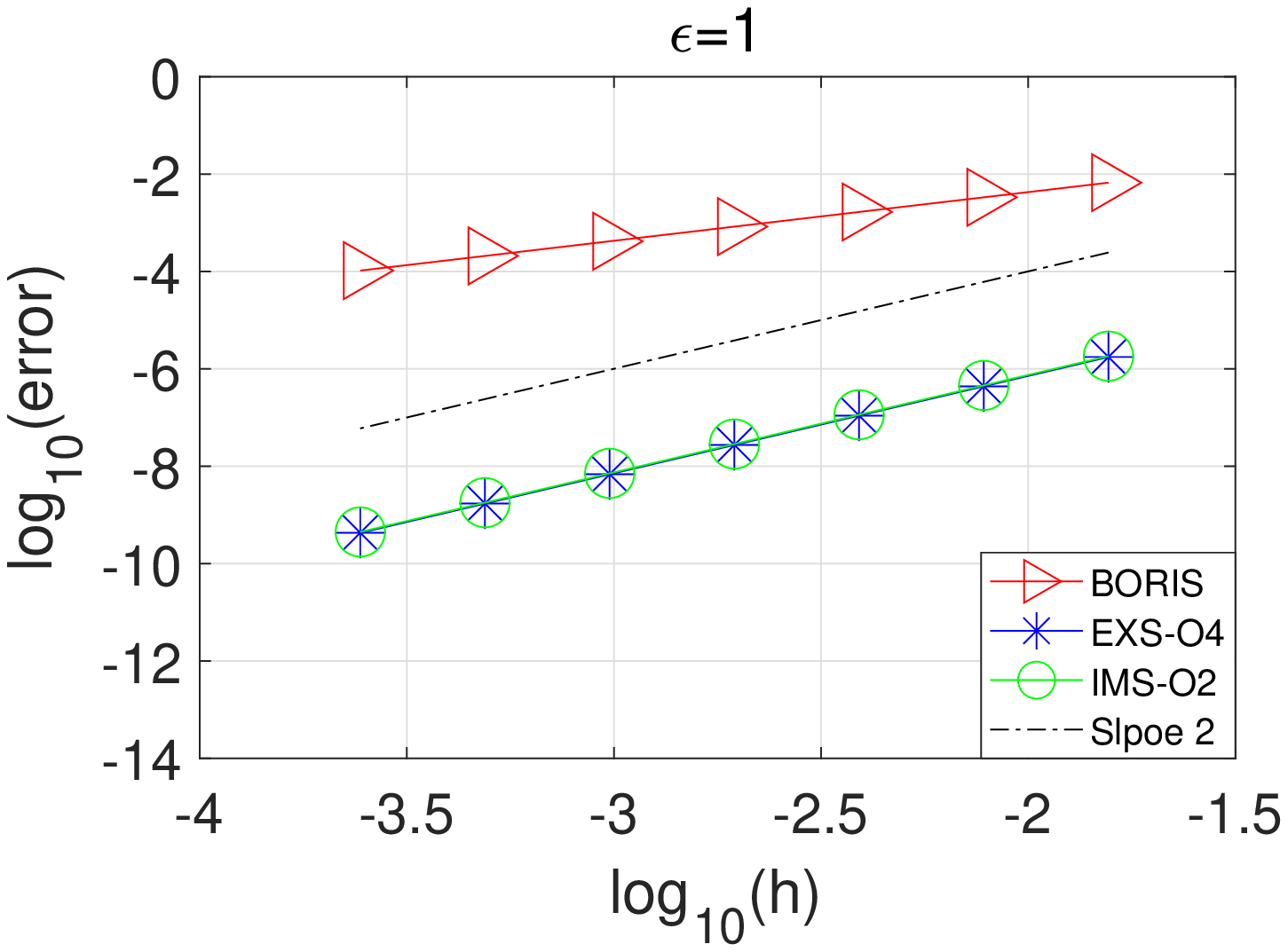}}			\subfigure{\includegraphics[width=4.7cm,height=4.2cm]{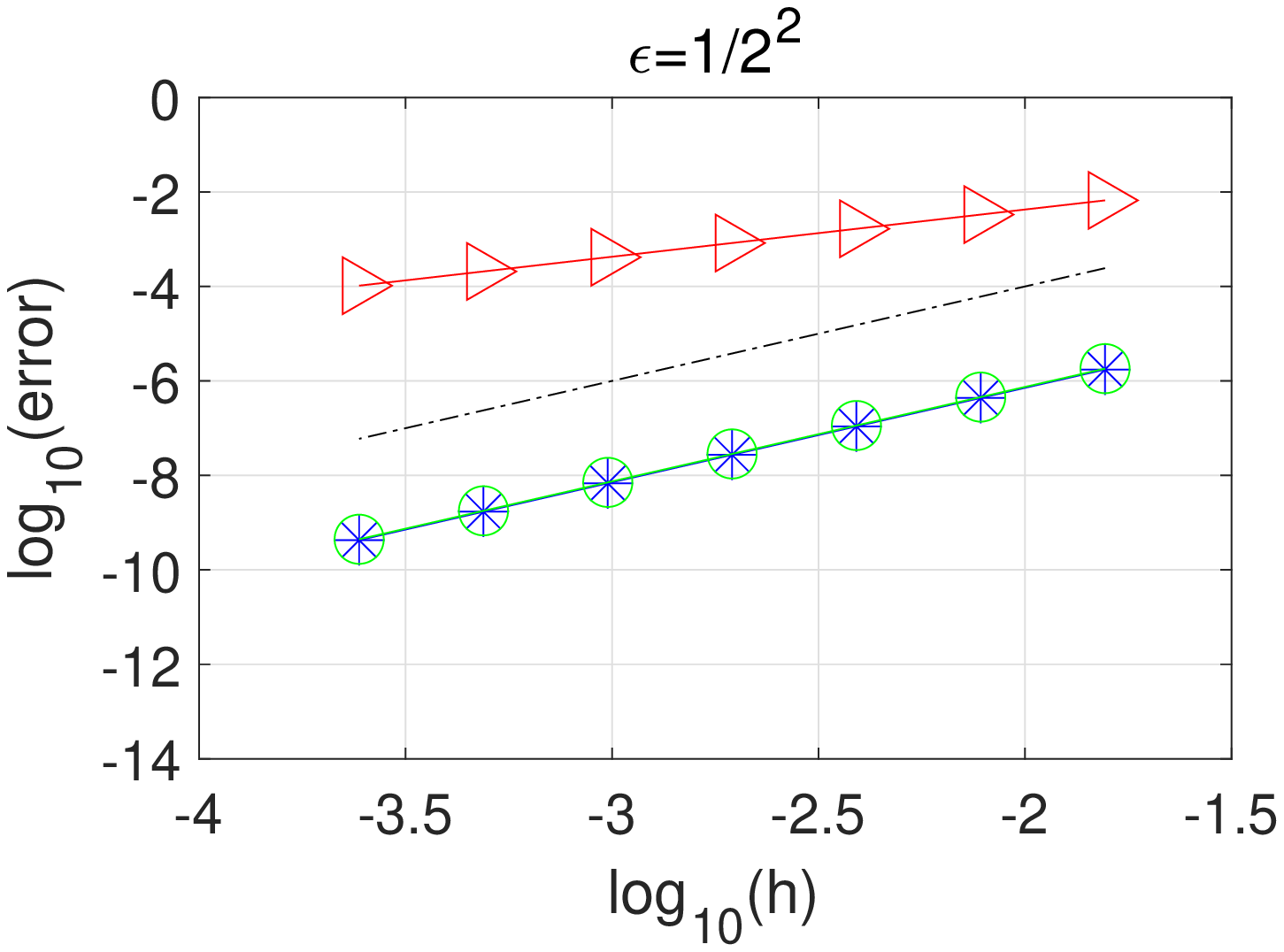}}
		\subfigure{\includegraphics[width=4.7cm,height=4.2cm]{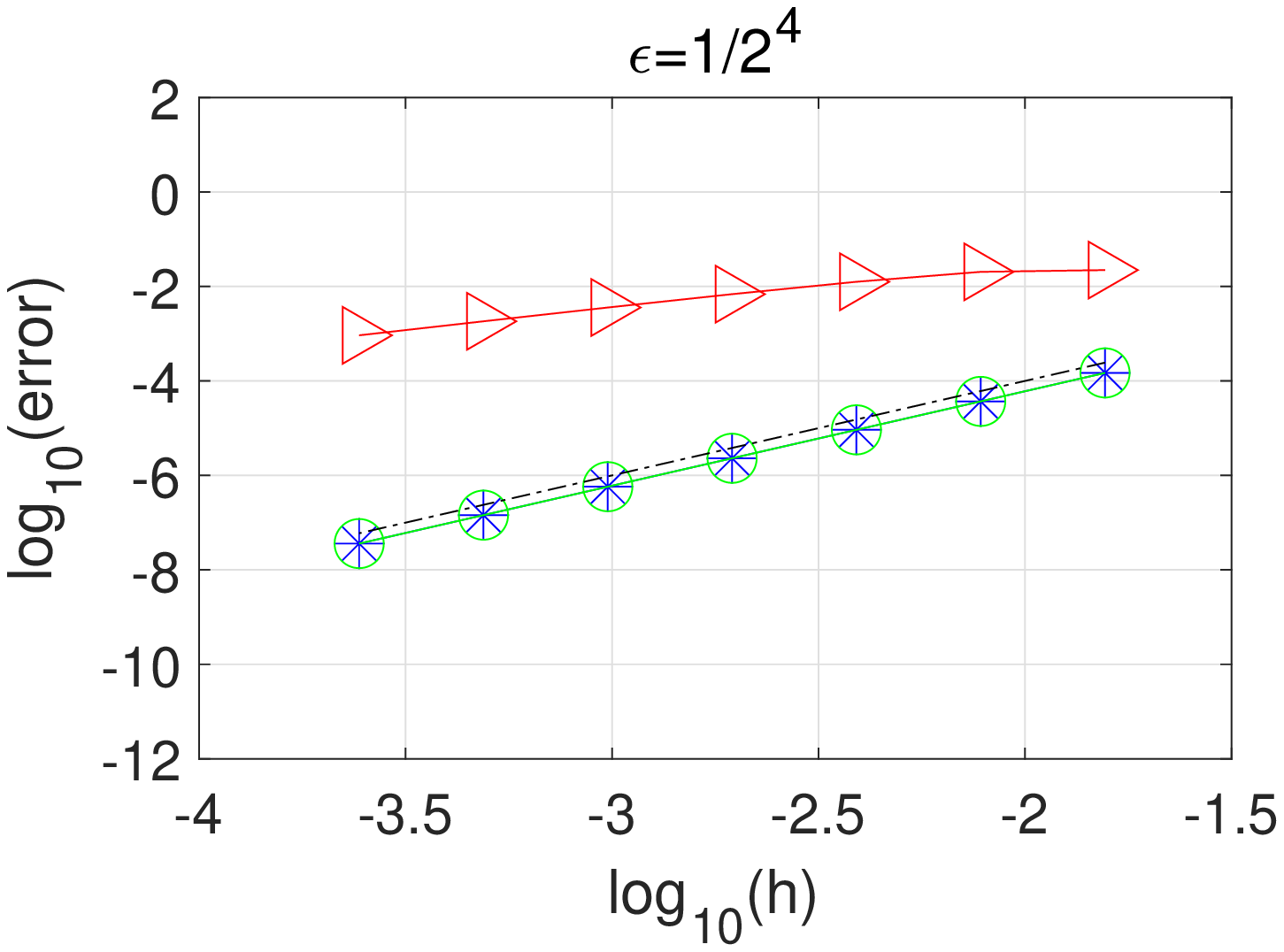}}		
	\end{tabular}
	\caption{{Problem 3.} The global errors \emph{error} with $t = 1$ and $h = \frac{1}{2^k}$ for $k = 6, \dots, 12$ under different $\eps$. }\label{fig-error}
\end{figure}

	$\bullet$ \textbf{Magnetic Moment conservation}. Observing Figs. \ref{fig-eI3}, \ref{fig-eI2} and \ref{fig-eI1},  all the methods have similar magnetic moment {conservations} over long {times.}

	{$\bullet$ \textbf{Accuracy}.  Fig. \ref{fig-error} confirms that our IMS-O2 and EXS-O2   both have second-order accuracy and they perform better than the  Boris algorithm.

The numerical results of Problems 1 and 2 support the theoretical conclusions given in Section \ref{mr}. It is noted that for the general system (Problem \ref{prob3}), the methods also show a long time conservation in the energy, momentum, and magnetic moment, which is more than expected.
	

	\section{Proofs of the main results}\label{proof}
In this section, we    rigorously prove the main results given in Section \ref{mr}. The technical tool {so called  the backward error 	analysis \cite{2006Geometric} which is indispensible for the study of  equations or} numerical solutions over long times will be used in the following proofs.
		\subsection{Backward error analysis}\label{back}
	{Following the analysis of backward error
		analysis, the main idea is to find a modified differential equation whose  solution $z(t)$  at $t=n h$ is equivalent to
the numerical result $x^n$ produced by the considered method.}
	\vskip2mm
	\textbf{EXS-O2.}
	{On the basis of} the first equation in \eqref{eq-EXS} and the symmetry of EXS-O2, we get
	\begin{equation*}
		x^{n-1}=x^n-he^{-\frac{h}{2}\tilde{B} (x^n)}v^n+\frac{h^2}{2}E(x^n).
	\end{equation*}
{This result and the first one of \eqref{eq-EXS} give}
\begin{equation*}\begin{aligned}
	&x^{n+1}-2x^n+x^{n-1}=h\left(e^{\frac{h}{2}\tilde{B} (x^n)}-e^{-\frac{h}{2}\tilde{B} (x^n)}\right)v^n+h^2E(x^n),\\ &x^{n+1}-x^{n-1}=h\left(e^{\frac{h}{2}\tilde{B} (x^n)}+e^{-\frac{h}{2}\tilde{B} (x^n)}\right)v^n.
\end{aligned}
\end{equation*}
 {Elimination of    $v^n$  leads to}
	\begin{equation*} \frac{x^{n+1}-2x^n+x^{n-1}}{h^2}=\frac{2}{h}\frac{e^{\frac{h}{2}\tilde{B}(x^n)}-e^{-\frac{h}{2}\tilde{B}(x^n)}}{e^{\frac{h}{2}\tilde{B}(x^n)}+e^{-\frac{h}{2}\tilde{B}(x^n)}}
		\frac{x^{n+1}-x^{n-1}}{2h}+E(x^n).
	\end{equation*}
	According to  {the special scheme of} $\tilde{B}(x)$, it is obtained that
	\begin{equation*}
		\frac{2}{h}\frac{e^{\frac{h}{2}\tilde{B}(x^n)}-e^{-\frac{h}{2}\tilde{B}(x^n)}}{e^{\frac{h}{2}\tilde{B}(x^n)}+e^{-\frac{h}{2}\tilde{B}(x^n)}}=
		\frac{\tan\left(\frac{h}{2}\abs{B(x^n)}\right)}{\frac{h}{2}\abs{B(x^n)}}\tilde{B}(x^n),
	\end{equation*}
{and such that}
	\begin{equation*}
		\frac{x^{n+1}-2x^n+x^{n-1}}{h^2}
		=\frac{\tan\left(\frac{h}{2}\abs{B(x^n)}\right)}{\frac{h}{2}\abs{B(x^n)}}
		\frac{x^{n+1}-x^{n-1}}{2h}\times B(x^n)+E(x^n).
	\end{equation*}
	For a fixed $t$,  {the function $z(t)$} has to satisfy
	$$\frac{z(t+h)-2z(t)+z(t-h)}{h^2}=\frac{\tan\left(\frac{h}{2}\abs{B(z(t))}\right)}{\frac{h}{2}\abs{B(z(t))}}
	\frac{z(t+h)-z(t-h)}{2h}\times B(z(t))+E(z(t)).$$
	We  {let} $z:=z(t)$ and expand the above functions in powers of $h$, so that
	\begin{equation}\label{modi-EXS}
		\ddot{z}+\frac{h^2}{12}\ddddot{z}+\cdots
		=\frac{\tan\left(\frac{h}{2}\abs{B(z)}\right)}{\frac{h}{2}\abs{B(z)}}(\dot{z}+\frac{h^2}{6}\dddot{z}+\cdots)\times B(z)+E(z).
	\end{equation}

	\textbf{IMS-O2.} {In an analogous way,  the scheme  \eqref{eq-IMS} of IMS-O2 can be formulated as}
	\begin{equation*}
		\begin{aligned}
			&\frac{x^{n+1}-2x^n+x^{n-1}}{h^2}\\
			=&\frac{2}{h}\frac{e^{\frac{h}{2}\tilde{B}(x^n)}-e^{-\frac{h}{2}\tilde{B}(x^n)}}{e^{\frac{h}{2}\tilde{B}(x^n)}+e^{-\frac{h}{2}\tilde{B}(x^n)}}\tilde{B}(x^n)
			\left[\frac{x^{n+1}-x^{n-1}}{2h}-\frac{h}{4}\int_{0}^{1}\left[E\left(\rho x^n+(1-\rho)x^{n+1} \right)-E\left(\rho x^n+(1-\rho)x^{n-1} \right)\right]d\rho\right]\\
			&+\frac{1}{2}\int_{0}^{1}\left[E\left(\rho x^n+(1-\rho)x^{n+1} \right)+E\left(\rho x^n+(1-\rho)x^{n-1}\right)\right]d\rho\\
=&\frac{\tan\left(\frac{h}{2}\abs{B(x^n)}\right)}{\frac{h}{2}\abs{B(x^n)}}
			\left[\frac{x^{n+1}-x^{n-1}}{2h}-\frac{h}{4}\int_{0}^{1}\left[E\left(x^n+\rho(x^{n+1}-x^n)\right)-E\left(x^n+\rho(x^{n-1}-x^n)\right)\right]d\rho\right]\times B(x^n)\\
			&+\frac{1}{2}\int_{0}^{1}\left[E\left(x^n+\rho(x^{n+1}-x^n)\right)+E\left(x^n+\rho(x^{n-1}-x^n)\right)\right]d\rho\\
=&\frac{\tan\left(\frac{h}{2}\abs{B(x^n)}\right)}{\frac{h}{2}\abs{B(x^n)}}
			\left[\frac{x^{n+1}-x^{n-1}}{2h}-\left(\frac{h}{8}E'(x^n)(x^{n+1}-x^{n-1})+\cdots\right)\right]\times B(x^n)\\
			&+E(x^n)+\frac{1}{4}E'(x^n)(x^{n+1}-2x^n+x^{n-1})+\cdots,
		\end{aligned}
	\end{equation*}
{where  the following fact is used here}
	{\begin{align*}
		&\int_{0}^{1}  {E}\left(\rho x^n+(1-\rho)x^{n+1}\right)d\rho
=\int_{0}^{1}{E}\left(\rho x^{n+1}+(1-\rho)x^n\right)d\rho
=\int_{0}^{1}{E}\left(x^n+\rho(x^{n+1}-x^n) \right)d\rho\\
		=&\int_{0}^{1} \left[E(x^n)+E'(x^n)\rho(x^{n+1}-x^n)+\cdots\right] d\rho
=E(x^n)+\frac{1}{2}E'(x^n)(x^{n+1}-x^n)+\cdots.
	\end{align*}}
	Therefore, {the function $z(t)$ satisfies}
	\begin{align*}
		&\frac{z(t+h)-2z(t)+z(t-h)}{h^2}\\=&\frac{\tan\left(\frac{h}{2}\abs{B(z(t))}\right)}{\frac{h}{2}\abs{B(z(t))}}
		\left[\frac{z(t+h)-z(t-h)}{2h}-\left(\frac{h}{8}E'(z(t))(z(t+h)-z(t-h))+\cdots\right)\right]\times B(z(t))\\
		&+E(z(t))+\frac{1}{4}E'(z(t))\left[z(t+h)-2z(t)+z(t-h)\right]+\cdots.
	\end{align*}
{Letting $z:=z(t)$ and expanding the above functions in  powers of $h$, we obtain}
	\begin{equation}\label{modi-IMS}
		\begin{aligned}
			\ddot{z}+\frac{h^2}{12}\ddddot{z}+\cdots
			=&\frac{\tan\left(\frac{h}{2}\abs{B(z)}\right)}{\frac{h}{2}\abs{B(z)}}\left[\left(\dot{z}+\frac{h^2}{6}\dddot{z}+\cdots\right)-\left(\frac{h^2}{4}E'(z)\left(\dot{z}+\frac{h^2}{6}\dddot{z}+\cdots\right)+\cdots\right)\right]\times B(z)\\
			&+E(z)+\frac{h^2}{4}E'(z)\left(\ddot{z}+\frac{h^2}{12}\ddddot{z}+\cdots\right)+\cdots.
		\end{aligned}
	\end{equation}
	
	{We note that when $h=0$, the equations \eqref{modi-EXS} and \eqref{modi-IMS} are both  identical  to \eqref{CPD}}. In fact,  {differentiating these two equations recursively and considering $h=0$ shows that} the third and higher {derivatives depend} on $(z,\dot{z})$. Then we get a modified equation {in even powers of $h$:
	$$\ddot{z}=\frac{\tan\left(\frac{h}{2}\abs{B(z)}\right)}{\frac{h}{2}\abs{B(z)}}\dot{z}\times B(z)+E(z)+h^2F_2(z,\dot{z})+h^4F_4(z,\dot{z})+\cdots,$$
where the coefficient functions  depend on $(z,\dot{z})$.}
	 By   taking inner product {on both sides of  \eqref{modi-EXS} and \eqref{modi-IMS} with different expressions, we can prove Theorem \ref{energy}--\ref{mag-mom} in the following three subsections, respectively}.

	{\subsection{Proof of the energy conservation (Theorem \ref{energy})}\label{modi-ene}}
{Since it has been shown in \cite{22Energy} that	IMS-O2 is an energy-preserving method, here  we just prove the exact conservation of modified energy and {the long} time behaviour of energy} for EXS-O2. {To prove the result, we need to derive two almost invariants which are close to the energy $H(x,v)$. The first one is deduced from the following lemma and the second  is obtained with the help of Lemma \ref{U(x)-ene-EXS}.}
\vskip2mm
	\begin{lem}\label{B(x)-ene-EXS}
		{It is obtained a function $$H_h(x,v)=H(x,v)+h^2H_2(x,v)+h^4H_4(x,v)+\cdots$$ such that}
		\begin{equation*} \frac{d}{dt}H_h{(z,\dot{z})}=\frac{\tan\left(\frac{h}{2}\abs{B(z)}\right)}{\frac{h}{2}\abs{B(z)}}\dot{z}^\intercal\left(\frac{h^2}{3!}\dddot{z}+\frac{h^4}{5!}z^{(5)}+\cdots\right)\times B(z)+\mathcal{O}(h^N)
		\end{equation*}
		along solutions of the modified differential equation \eqref{modi-EXS}, {where the functions $H_{2j}(x,v)$ are independent of the step size $h$ and $\mathcal{O}(h^N)$ is the trunction term.} 
	\end{lem}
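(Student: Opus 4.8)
The plan is to run the standard energy-type backward error argument: differentiate the energy $H$ along a solution $z(t)$ of the modified equation \eqref{modi-EXS} and absorb every non-magnetic remainder into correction terms. First I would solve \eqref{modi-EXS} for $\ddot z$,
\[
\ddot{z}=\frac{\tan\left(\frac{h}{2}\abs{B(z)}\right)}{\frac{h}{2}\abs{B(z)}}\left(\dot z+\tfrac{h^2}{3!}\dddot z+\tfrac{h^4}{5!}z^{(5)}+\cdots\right)\times B(z)+E(z)-\tfrac{h^2}{12}z^{(4)}-\tfrac{h^4}{360}z^{(6)}-\cdots,
\]
and compute $\frac{d}{dt}H(z,\dot z)=\dot z^\intercal\ddot z+\nabla U(z)^\intercal\dot z$. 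Substituting $\ddot z$ and using the pointwise orthogonality $\dot z^\intercal(\dot z\times B(z))=0$ kills the leading magnetic term, so the only surviving magnetic contribution is exactly the claimed right-hand side (the scalar prefactor factors straight through the inner product), while $\dot z^\intercal E(z)$ combines with $\nabla U(z)^\intercal\dot z$ and cancels because $E=-\nabla U$.

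The remaining discrepancy comes entirely from the even-derivative corrections, namely $-\tfrac{h^2}{12}\dot z^\intercal z^{(4)}-\tfrac{h^4}{360}\dot z^\intercal z^{(6)}-\cdots$. The key observation is that each summand is a \emph{total time derivative}: repeated integration by parts gives $\dot z^\intercal z^{(2m)}=\frac{d}{dt}G_m$, with for instance $G_2=\dot z^\intercal\dddot z-\tfrac12\abs{\ddot z}^2$ and, in general, $G_m$ a polynomial in $\dot z,\ddot z,\dots,z^{(2m-1)}$. This step is where the symmetry of EXS-O2 is essential: it forces \eqref{modi-EXS} to contain only even powers of $h$, so no odd-derivative correction — which would \emph{not} be a perfect derivative — ever appears, and the whole non-magnetic remainder is exact-differential.

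Next I would invoke the structural fact recorded just after \eqref{modi-IMS}: differentiating \eqref{modi-EXS} recursively expresses every derivative $z^{(k)}$ with $k\ge 2$ as a function of $(z,\dot z)$ depending on $h$ through a power series. Substituting these into each $G_m$ turns it into a genuine phase-space function $G_m(z,\dot z,h)$, so that setting
\[
H_h:=H+\tfrac{h^2}{12}G_2+\tfrac{h^4}{360}G_3+\cdots
\]
yields $\frac{d}{dt}H_h(z,\dot z)$ equal to the asserted magnetic term. Re-expanding each $G_m$ in powers of $h$ and collecting equal powers then produces the claimed form $H_h=H+h^2H_2+h^4H_4+\cdots$ with coefficient functions $H_{2j}$ independent of $h$.

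Finally, since this formal series need not converge, I would truncate the construction at the $N$-th order, retaining only $H_2,\dots,H_{2\lfloor N/2\rfloor}$ and carrying the recursion so that the uncancelled tail is $\mathcal{O}(h^N)$; this gives the stated identity with truncation term $\mathcal{O}(h^N)$. I expect the main obstacle to be precisely the bookkeeping of the third step — checking that after the recursive substitution of higher derivatives the corrections really reorganize into $h$-independent functions $H_{2j}$ on phase space — together with controlling the truncation uniformly on the assumed compact set, so that the constant hidden in $\mathcal{O}(h^N)$ is independent of $n$ and $h$.
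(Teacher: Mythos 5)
Your proposal is correct and follows essentially the same route as the paper's proof: both multiply the modified equation \eqref{modi-EXS} by $\dot{z}^\intercal$ (your rearrangement solving for $\ddot z$ first is algebraically the same step), kill the leading magnetic term by orthogonality and the electric term via $E=-\nabla U$, and convert each even-derivative remainder $\dot z^\intercal z^{(2k)}$ into a total derivative by the identity $\dot{z}^\intercal z^{(2k)}=\frac{d}{dt}\bigl(\dot{z}^\intercal z^{(2k-1)}-\ddot{z}^\intercal z^{(2k-2)}+\cdots\bigr)$, then use the recursive elimination of higher derivatives in favour of $(z,\dot z)$ and truncation at order $N$. Your added remarks on why symmetry (even powers of $h$ only) is essential and on the $h$-independent reorganization of the $H_{2j}$ are correct elaborations of details the paper leaves implicit.
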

	\begin{proof}
	{Multiplying \eqref{modi-EXS} with $\dot{z}^\intercal$ and using the fact that}
		$$\dot{z}^\intercal z^{(2k)}=\frac{d}{dt}\left(\dot{z}^\intercal z^{(2k-1)}-\ddot{z}^\intercal z^{(2k-2)}+\cdots+\frac{(-1)^{k+1}}{2}\left(z^{(k)}\right)^\intercal z^{(k)}\right),$$
	{we get} $$\frac{d}{dt}\left(\frac{1}{2}\dot{z}^\intercal\dot{z}+U(z)
+\frac{h^2}{12}\left(\dot{z}^\intercal\dddot{z}-\frac{1}{2}\ddot{z}^\intercal\ddot{z}\right)+\cdots\right)	=\frac{\tan\left(\frac{h}{2}\abs{B(z)}\right)}{\frac{h}{2}\abs{B(z)}}\dot{z}^\intercal\left(\frac{h^2}{3!}\dddot{z}+\frac{h^4}{5!}z^{(5)}+\cdots\right)\times B(z)+\mathcal{O}(h^N).$$
{Then the result of this lemma is immediately obtained.}
\hfill $ \blacksquare$\end{proof}
	\begin{cor}\label{B-ene-EXS}
{\textbf{(The first almost invariant close to energy)}}
		If the {magnetic field is constant, i.e., $B(x) \equiv B$}, {we have a function
		$$\tilde{H}_h(x,v)=H(x,v)+h^2\tilde{H}_2(x,v)+h^4\tilde{H}_4(x,v)+\cdots$$satisfing}
		\begin{equation*}
			\frac{d}{dt} \tilde{H}_h{(z,\dot{z})} =\mathcal{O}(h^N)
		\end{equation*}
		along solutions of the modified differential equation \eqref{modi-EXS}, {where the functions $\tilde{H}_{2j}(x,v)$ are independent of $h$ and $\mathcal{O}(h^N)$ is the trunction term.}
		
	\end{cor}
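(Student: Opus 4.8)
The plan is to specialize the identity of Lemma~\ref{B(x)-ene-EXS} to a constant magnetic field and then to recognize its right-hand side, which is $\mathcal{O}(h^2)$, as an exact time derivative; subtracting that derivative from $H_h$ produces the asserted $\tilde H_h$. When $B(x)\equiv B$, the scalar factor $\tan(\tfrac h2\abs{B(z)})/(\tfrac h2\abs{B(z)})$ ceases to depend on $z$; writing it as a constant $\alpha=\alpha(h)$, Lemma~\ref{B(x)-ene-EXS} becomes
$$\frac{d}{dt}H_h(z,\dot z)=\alpha\sum_{k\ge1}\frac{h^{2k}}{(2k+1)!}\,\dot z^\intercal\big(z^{(2k+1)}\times B\big)+\mathcal{O}(h^N),$$
so the whole obstruction is assembled from the scalars $\dot z^\intercal(z^{(2k+1)}\times B)$.

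First I would rewrite each such term through the scalar triple product as $\dot z^\intercal(z^{(2k+1)}\times B)=B^\intercal(\dot z\times z^{(2k+1)})$, which is legitimate precisely because $B$ is constant. The key step is the telescoping identity
$$\dot z\times z^{(2k+1)}=\frac{d}{dt}\sum_{j=1}^{k}(-1)^{j-1}\,z^{(j)}\times z^{(2k+1-j)},$$
which I would establish by repeatedly applying $\frac{d}{dt}\big(z^{(j)}\times z^{(2k+1-j)}\big)=z^{(j+1)}\times z^{(2k+1-j)}+z^{(j)}\times z^{(2k+2-j)}$: upon differentiating the sum, all intermediate cross terms cancel in pairs, leaving only the target $z^{(1)}\times z^{(2k+1)}$ together with a diagonal term $z^{(k+1)}\times z^{(k+1)}$ that vanishes by antisymmetry of the cross product. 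Since $B$ is constant it passes through $d/dt$, so $B^\intercal(\dot z\times z^{(2k+1)})$ is an exact derivative and the entire right-hand side above equals $\frac{d}{dt}G_h+\mathcal{O}(h^N)$ with the explicit expression $G_h=\alpha\sum_{k\ge1}\frac{h^{2k}}{(2k+1)!}\,B^\intercal\sum_{j=1}^{k}(-1)^{j-1}z^{(j)}\times z^{(2k+1-j)}$.

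Defining $\tilde H_h:=H_h-G_h$ then gives $\frac{d}{dt}\tilde H_h(z,\dot z)=\mathcal{O}(h^N)$ along solutions of \eqref{modi-EXS}. To present $\tilde H_h$ as a function of $(x,v)$ alone, as the statement requires, I would invoke the observation recorded after \eqref{modi-IMS}: differentiating the modified equation recursively expresses every derivative $z^{(j)}$, $j\ge2$, as an $h$-dependent function of $(z,\dot z)$ in even powers of $h$. Substituting these into $G_h$ (and into the higher-order pieces of $H_h$) turns $\tilde H_h$ into $H(x,v)+h^2\tilde H_2(x,v)+h^4\tilde H_4(x,v)+\cdots$ with $h$-independent coefficients $\tilde H_{2j}$, and since $G_h=\mathcal{O}(h^2)$ the leading term remains $H$. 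The main obstacle is the telescoping identity, and in particular checking that the terminal diagonal term is genuinely of the form $z^{(k+1)}\times z^{(k+1)}$ so that it vanishes; everything else is routine bookkeeping of the backward-error expansion, and the whole argument hinges on the two structural facts that $B$ is constant (so the $\tan$-factor is a scalar and $B$ commutes with $d/dt$) and that the cross product is antisymmetric.
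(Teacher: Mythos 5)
Your proposal is correct and takes essentially the same approach as the paper: both specialize Lemma~\ref{B(x)-ene-EXS} to constant $B$ and use the same alternating telescoping identity to write $\dot z^\intercal\left(z^{(2k+1)}\times B\right)$ as a total derivative --- your form $B^\intercal\left(\dot z\times z^{(2k+1)}\right)$ with vanishing diagonal term $z^{(k+1)}\times z^{(k+1)}$ is just the cyclic rewriting of the paper's triple products $\left(z^{(j)}\right)^\intercal\left(z^{(2k+1-j)}\times B\right)$ with vanishing term $\left(z^{(k+1)}\right)^\intercal\left(z^{(k+1)}\times B\right)$ --- and then subtract the resulting antiderivative from $H_h$. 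The only additions are cosmetic, together with your explicit remark on re-expressing higher derivatives through $(z,\dot z)$ via the modified equation, a step the paper leaves implicit.
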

	\begin{proof}
		 Based on the results in Lemma \ref{B(x)-ene-EXS}  and the fact that
		$$\dot{z}^\intercal\left(z^{(2k+1)}\times B\right)=\frac{d}{dt}\left(\dot{z}^\intercal\left(z^{(2k)}\times B\right)-\ddot{z}^\intercal\left(z^{(2k-1)}\times B\right)+\cdots+(-1)^{k+1}\left(z^{(k)}\right)^\intercal\left(z^{(k+1)}\times B\right)\right),$$
the first statement is obtained. 
{Then, it is arrived that  $$\frac{d}{dt}\left(\frac{1}{2}\dot{z}^\intercal\dot{z}+U(z)+\frac{h^2}{12}\left(\dot{z}^\intercal\dddot{z}-\frac{1}{2}\ddot{z}^\intercal\ddot{z}\right)-\frac{\tan\left(\frac{h}{2}\abs{B}\right)}{\frac{h}{2}\abs{B}}\left(\frac{h^2}{3!}\dot{z}^\intercal(\ddot{z}\times B)+\cdots\right)+\cdots\right)
		=\mathcal{O}(h^N),$$
which completes the proof.}
	\hfill $ \blacksquare$\end{proof}
{	
\begin{rem}	It is noted that based on this  corollary,    the method EXS-O2 will be shown to have a long-time near-conservation of the energy for a constant magnetic field $B$.
\end{rem}	 }
	\begin{lem}\label{U(x)-ene-EXS}
		{There exists a function
		$$H_h(x,v)=H(x,v)+h^2H_2(x,v)+h^4H_4(x,v)+\cdots$$
		and it satisfies}
		\begin{equation*}
			\frac{d}{dt}H_h{(z,\dot{z})}=\left(\frac{h^2}{3!}\dddot{z}+\frac{h^4}{5!}z^{(5)}+\cdots\right)^\intercal E(z)+\mathcal{O}(h^N)
		\end{equation*}
		along solutions obtained by the modified differential equation \eqref{modi-EXS},  { where the functions $H_{2j}(x,v)$ (different from
			those in Lemma \ref{B(x)-ene-EXS}) don't depend on the step size $h$ and $\mathcal{O}(h^N)$ is the trunction term.}
	\end{lem}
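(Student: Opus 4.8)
The plan is to imitate the proof of Lemma \ref{B(x)-ene-EXS}, but to replace the multiplier $\dot{z}^\intercal$ by one that annihilates the \emph{entire} magnetic contribution rather than only its leading term. First I would write the modified equation \eqref{modi-EXS} compactly as
$$a_h=\frac{\tan\left(\frac{h}{2}\abs{B(z)}\right)}{\frac{h}{2}\abs{B(z)}}\,\hat{v}_h\times B(z)+E(z),$$
where $\hat{v}_h:=\dot{z}+\frac{h^2}{3!}\dddot{z}+\frac{h^4}{5!}z^{(5)}+\cdots$ and $a_h:=\ddot{z}+\frac{h^2}{12}\ddddot{z}+\cdots$ denote the symmetric velocity and acceleration expansions associated with $\frac{z(t+h)-z(t-h)}{2h}$ and $\frac{z(t+h)-2z(t)+z(t-h)}{h^2}$. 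Whereas Lemma \ref{B(x)-ene-EXS} multiplies by $\dot{z}^\intercal$ (which only kills $\dot{z}\times B$), here I would take the inner product with $\hat{v}_h^\intercal$. Since $\hat{v}_h^\intercal(\hat{v}_h\times B(z))=0$ identically, the whole magnetic force does no work and we are left with $\hat{v}_h^\intercal a_h=\hat{v}_h^\intercal E(z)$.

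Next I would show the left-hand side is an exact time derivative. Expanding $\hat{v}_h^\intercal a_h$ produces scalar products $\left(z^{(2j+1)}\right)^\intercal z^{(2k)}$, each of \emph{odd} total differentiation order; exactly as in the identity already used for Lemma \ref{B(x)-ene-EXS}, every such product equals a total derivative $\frac{d}{dt}(\cdots)$. Collecting these gives $\hat{v}_h^\intercal a_h=\frac{d}{dt}\left(\frac{1}{2}\abs{\dot{z}}^2+h^2(\cdots)+\cdots\right)$, whose leading term is the kinetic energy. On the right I would split $\hat{v}_h=\dot{z}+(\hat{v}_h-\dot{z})$ and use $E=-\nabla U$ to write $\dot{z}^\intercal E(z)=-\frac{d}{dt}U(z)$. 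Moving this potential term to the left then yields
$$\frac{d}{dt}\left(\frac{1}{2}\abs{\dot{z}}^2+U(z)+h^2(\cdots)+\cdots\right)=\left(\hat{v}_h-\dot{z}\right)^\intercal E(z)=\left(\frac{h^2}{3!}\dddot{z}+\frac{h^4}{5!}z^{(5)}+\cdots\right)^\intercal E(z),$$
which is precisely the asserted identity, with $H_h(x,v)=H(x,v)+h^2H_2(x,v)+\cdots$ read off from the bracket.

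Finally, to legitimize the formal series I would truncate at order $h^N$, collecting all neglected tails into the $\mathcal{O}(h^N)$ remainder, and to guarantee that each coefficient $H_{2j}$ is a genuine function of $(x,v)=(z,\dot{z})$ independent of $h$ I would invoke the reduction recorded just before this subsection: differentiating \eqref{modi-EXS} recursively and setting $h=0$ expresses every derivative $z^{(3)},z^{(4)},\dots$ through $(z,\dot{z})$, and substituting these back removes all higher derivatives from the bracket. I expect the verification that $\hat{v}_h^\intercal a_h$ is a total derivative at \emph{all} orders (the parity identity) together with this truncation bookkeeping to be the technically heaviest part, though it is routine and parallels Lemma \ref{B(x)-ene-EXS}; the one genuinely load-bearing step is the choice of multiplier $\hat{v}_h^\intercal$, the \emph{full} symmetric velocity rather than $\dot{z}^\intercal$, since only this isolates the pure electric remainder demanded by the statement.
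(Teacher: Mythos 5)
Your proposal is correct and follows essentially the same route as the paper: the paper's proof likewise takes the inner product of \eqref{modi-EXS} with the full symmetric velocity expansion $\left(\dot{z}+\frac{h^2}{6}\dddot{z}+\cdots\right)^\intercal$, which annihilates the entire magnetic term, and then uses the fact that ${z^{(l)}}^\intercal z^{(m)}$ with $l+m$ odd is a total differential to collect the left-hand side into $\frac{d}{dt}H_h$, leaving exactly the electric remainder $\left(\frac{h^2}{3!}\dddot{z}+\frac{h^4}{5!}z^{(5)}+\cdots\right)^\intercal E(z)$. Your identification of the multiplier $\hat{v}_h^\intercal$ as the load-bearing step, and your truncation/reduction bookkeeping for the $h$-independence of the $H_{2j}$, match the paper's (more tersely stated) argument.
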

	\begin{proof}
 {It is noted that if $l+m$ is odd, ${z^{(l)}}^\intercal z^{(m)}$ can be written as a total differential. Then}
 taking inner product {on both sides of  \eqref{modi-EXS} with $\left(\dot{z}+\frac{h^2}{6}\dddot{z}+\cdots\right)$ and using the same arguments  of Lemma \ref{B(x)-ene-EXS}, one has} $$\frac{d}{dt}\left(\frac{1}{2}\dot{z}^\intercal\dot{z}+U(z)+\frac{h^2}{12}\left(\dot{z}^\intercal\dddot{z}-\frac{1}{2}\ddot{z}^\intercal\ddot{z}\right)+\cdots\right)
		=\left(\frac{h^2}{3!}\dddot{z}+\frac{h^4}{5!}z^{(5)}+\cdots\right)^\intercal E(z)+\mathcal{O}(h^N).$$
 {The proof is complete.}	\hfill $ \blacksquare$\end{proof}
	
	\begin{cor}\label{Q-ene-EXS}
{\textbf{(The second almost invariant close to energy)}}
		{If  $U(x)=\frac{1}{2}x^\intercal Qx + q^\intercal x$,} {we get a function
		$$\widehat H_h(x,v)=H(x,v)+h^2\widehat H_2(x,v)+h^4\widehat H_4(x,v)+\cdots$$
		 such that}
		\begin{equation*}
			\frac{d}{dt}\widehat{H}_h{(z,\dot{z})}=\mathcal{O}(h^N)
		\end{equation*}
		along solutions of the modified differential equation \eqref{modi-EXS}, {where the functions $H_{2j}(x,v)$ are independent of $h$ and $\mathcal{O}(h^N)$ is the trunction term}.
	\end{cor}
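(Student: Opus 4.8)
The plan is to prove Corollary \ref{Q-ene-EXS} by exactly mirroring the argument of Corollary \ref{B-ene-EXS}, replacing the role of the constancy of $B$ with the quadratic structure of $U$. The starting point is the identity established in Lemma \ref{U(x)-ene-EXS}, namely
$$\frac{d}{dt}H_h(z,\dot{z})=\left(\frac{h^2}{3!}\dddot{z}+\frac{h^4}{5!}z^{(5)}+\cdots\right)^\intercal E(z)+\mathcal{O}(h^N)$$
along solutions of the modified equation \eqref{modi-EXS}, where $H_h$ is the function of that lemma. Since $U(x)=\frac{1}{2}x^\intercal Qx+q^\intercal x$ is quadratic, the electric field is affine, $E(x)=-\nabla U(x)=-Qx-q$, and this is precisely the structural fact I would exploit to rewrite the entire right-hand side as a total time derivative.

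The main step is to show that each summand $\big(z^{(2k+1)}\big)^\intercal E(z)$ is a total time derivative. I would split it as
$$\big(z^{(2k+1)}\big)^\intercal E(z)=-\big(z^{(2k+1)}\big)^\intercal Qz-\big(z^{(2k+1)}\big)^\intercal q.$$
The constant-vector contribution is trivially a total derivative, $\big(z^{(2k+1)}\big)^\intercal q=\frac{d}{dt}\big(\big(z^{(2k)}\big)^\intercal q\big)$. For the quadratic contribution I would verify the telescoping identity
$$\big(z^{(2k+1)}\big)^\intercal Qz=\frac{d}{dt}\left(\big(z^{(2k)}\big)^\intercal Qz-\big(z^{(2k-1)}\big)^\intercal Q\dot{z}+\cdots+\frac{(-1)^{k}}{2}\big(z^{(k)}\big)^\intercal Qz^{(k)}\right),$$
which is the quadratic-potential analogue of the two algebraic identities already used for $\dot{z}^\intercal z^{(2k)}$ in Lemma \ref{B(x)-ene-EXS} and for $\dot{z}^\intercal\big(z^{(2k+1)}\times B\big)$ in Corollary \ref{B-ene-EXS}. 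Its verification is a direct differentiation in which consecutive cross-terms cancel in pairs, the symmetry $Q=Q^\intercal$ being used to collapse the two derivative contributions of the central term $\frac{(-1)^{k}}{2}\big(z^{(k)}\big)^\intercal Qz^{(k)}$.

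With both pieces handled, every term on the right-hand side of the Lemma \ref{U(x)-ene-EXS} identity is a total time derivative, so the whole expression $\big(\frac{h^2}{3!}\dddot{z}+\cdots\big)^\intercal E(z)$ equals $\frac{d}{dt}(\cdots)$ up to the truncation $\mathcal{O}(h^N)$. I would then move this total derivative to the left and absorb it into $H_h$, which defines the new modified energy
$$\widehat{H}_h(x,v)=H(x,v)+h^2\widehat{H}_2(x,v)+h^4\widehat{H}_4(x,v)+\cdots,$$
and yields $\frac{d}{dt}\widehat{H}_h(z,\dot{z})=\mathcal{O}(h^N)$ along solutions of \eqref{modi-EXS}. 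The coefficient functions $\widehat{H}_{2j}$ are again independent of $h$, because the antiderivatives above are assembled from the $h$-independent quantities $\dddot{z},z^{(5)},\dots$, which the modified equation expresses through $(z,\dot{z})$. I expect the only delicate point to be the telescoping identity for $\big(z^{(2k+1)}\big)^\intercal Qz$: one must invoke the symmetry of $Q$ precisely at the central term and track the alternating signs carefully so that the boundary terms cancel, leaving exactly $\big(z^{(2k+1)}\big)^\intercal Qz$. Everything else is the same formal power-series bookkeeping already carried out in the preceding lemmas.
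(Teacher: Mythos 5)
Your proposal is correct and takes essentially the same approach as the paper: both start from the identity of Lemma \ref{U(x)-ene-EXS}, use $E(z)=-(Qz+q)$ to rewrite each term $\left(z^{(2k+1)}\right)^\intercal E(z)$ as a total time derivative via the telescoping identity whose central term relies on the symmetry of $Q$, and then absorb these derivatives into the modified energy to obtain $\widehat{H}_h$. The only cosmetic difference is that the paper keeps $Qz+q$ together in a single telescoping identity, whereas you handle the constant part $q$ and the quadratic part $Qz$ separately; this is immaterial.
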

	\begin{proof}
{Since the expression ${z^{(2k+1)}}^\intercal \nabla U(z)={z^{(2k+1)}}^\intercal(Qz+q)$ is a total differential:}
		$$\left({z^{(2k+1)}}\right)^\intercal(Qz+q)=\frac{d}{dt}\left(\left({z^{(2k)}}\right)^\intercal(Qz+q)-\left( {z^{(2k-1)}}\right) ^\intercal Q\dot{z}+\cdots+\frac{(-1)^k}{2}\left({z^{(k)}}\right)^\intercal Qz^{(k)}\right),$$
{it is clear that}	$$\frac{d}{dt}\left(\frac{1}{2}\dot{z}^\intercal\dot{z}+U(z)+\frac{h^2}{12}\left(\dot{z}^\intercal\dddot{z}-\frac{1}{2}\ddot{z}^\intercal\ddot{z}\right)+\frac{h^2}{6}\left(\ddot{z}^\intercal(Qz+q)-\frac{1}{2}\dot{z}^\intercal Q\dot{z}\right)+\cdots\right)
		=\mathcal{O}(h^N).$$
	\hfill $ \blacksquare$\end{proof}

{	
\begin{rem} {This result   confirms that the method EXS-O2 hold a long-term near-conservation of the total energy as long as the scalar potential is quadratic.}
\end{rem}	
Based on the above preparations, we are in the position to prove the results \eqref{eq-energy} and \eqref{Hherr} of Theorem \ref{energy}.}

\vskip2mm
\textbf{Proof of \eqref{eq-energy}.}
{
The result is firstly shown for a constant magnetic field $B$. From Corollary \ref{B-ene-EXS}, it follows that}
\begin{equation}\label{H-h^2}
\begin{aligned}
	H(x^n,v^n)&=\tilde{H}_h(x^n,v^n)+\mathcal{O}(h^2)=\tilde{H}_h(x^0,v^0)+\sum_{k=1}^{n}\left(\tilde{H}_h(x^k,v^k)-\tilde{H}_h(x^{k-1},v^{k-1})\right)+\mathcal{O}(h^2)\\
	&=H(x^0,v^0)+\mathcal{O}(h^2)+n\mathcal{O}(h^{N+1})+\mathcal{O}(h^2)=H(x^0,v^0)+\mathcal{O}(h^2).
\end{aligned}
\end{equation}
The last equation is {meaningful} if and only if $nh^{N+1}\le h^2$, i.e. $nh\le h^{2-N}$ {and this gives the desired bound for the deviation of the total energy along the numerical solution.} {
{Moreover,} for the quadratic scalar potential,  Corollary \ref{Q-ene-EXS} implies
$H(x^n,v^n) =\hat{H}_h(x^n,v^n)+\mathcal{O}(h^2).$
Using the same derivation as stated above,  it is easy to show that the result  \eqref{eq-energy} also holds for the quadratic scalar potential.}
	  \hfill $ \blacksquare$
\vskip2mm
{\textbf{Proof of \eqref{Hherr}.}}  	The second equation in \eqref{eq-EXS} {can be reformulated as
		$$e^{-\frac{h}{2}\tilde{B} (x^{n+1})}v^{n+1}=e^{\frac{h}{2}\tilde{B} (x^n)}v^n+\frac{h}{2}[E(x^n)+E(x^{n+1})].$$
This result and the fact that $\tilde{B}$ is skew-symmetric imply}
		\begin{align*}
			\frac{1}{2}({v^{n+1}})^\intercal v^{n+1}-\frac{1}{2}({v^n})^\intercal v^n
			&=\frac{1}{2}\left(e^{-\frac{h}{2}\tilde{B} (x^{n+1})}{v^{n+1}}\right)^\intercal \left(e^{-\frac{h}{2}\tilde{B} (x^{n+1})}v^{n+1}\right)-\frac{1}{2}\left(e^{\frac{h}{2}\tilde{B} (x^n)}{v^n}\right)^\intercal \left(e^{\frac{h}{2}\tilde{B} (x^n)}v^n\right)\\
			&=\frac{1}{2}\left[E(x^n)+E(x^{n+1})\right]^\intercal\left[he^{\frac{h}{2}\tilde{B} (x^n)}v^n+\frac{h^2}{4}\left[E(x^n)+E(x^{n+1})\right]\right].
		\end{align*}
{Besides, the  deviation of $U$ at $x^{n+1}$ and $x^{n}$ can be expressed as}
		\begin{align*}
			U(x^{n+1})-U(x^n)&=\frac{1}{2}({x^{n+1}})^\intercal Qx^{n+1}+q^\intercal  x^{n+1}-\left(\frac{1}{2}({x^n})^\intercal Qx^n+q^\intercal  x^n\right)\\
			&=\frac{1}{2}({x^{n+1}}+x^n)^\intercal Q(x^{n+1}-x^n)+q^\intercal  (x^{n+1}-x^n)\\
			&=\left[\frac{1}{2}({x^{n+1}}+x^n)^\intercal Q+q^\intercal\right]{\left(he^{\frac{h}{2}\tilde{B} (x^n)}v^n+\frac{h^2}{2}E(x^n)\right)}.
		\end{align*}
	{Therefore} $$\frac{1}{2}({v^{n+1}})^\intercal v^{n+1}-\frac{1}{2}({v^n})^\intercal v^n+U(x^{n+1})-U(x^n)=\frac{h^2}{8}\abs{\nabla U(x^{n+1})}^2-\frac{h^2}{8}\abs{\nabla U(x^n)}^2$$
{and this further gives} $H_h(x^{n+1},v^{n+1})=H_h(x^n,v^n).$
	\hfill $ \blacksquare$
	{\subsection{Proof of the momentum conservation (Theorem \ref{momentum})}}\label{modi-mom}
{The proof is given by  finding   two almost invariants which are close to momentum. To derive these invariants, we first present the following lemma.}
\vskip2mm
	\begin{lem}\label{B(x)-mom}
		{The following two
		functions
		\begin{equation*}
			\begin{aligned}&M_h^e(x,v)=M(x,v)+h^2M_2^e(x,v)+h^4M_4^e(x,v)+\cdots,\\
&M_h^i(x,v)=M(x,v)+h^2M_2^i(x,v)+h^4M_4^i(x,v)+\cdots	,	\end{aligned}
		\end{equation*}
		 can be derived with the $h$-independent  functions $M_{2j}^e(x,v)$ and $M_{2j}^i(x,v)$, and they  satisfy}
		\begin{equation*}
			\frac{d}{dt}M_h^e{(z,\dot{z})}=\frac{\tan\left(\frac{h}{2}\abs{B(z)}\right)}{\frac{h}{2}\abs{B(z)}}z^\intercal S\left(\frac{h^2}{3!}\dddot{z}+\frac{h^4}{5!}z^{(5)}+\cdots\right)\times B(z)+\mathcal{O}(h^N)
		\end{equation*}
		along solutions of the modified differential equation \eqref{modi-EXS} for EXS-O2 and
		\begin{equation*}
			\begin{aligned}
				\frac{d}{dt}M_h^i{(z,\dot{z})}&=\frac{\tan\left(\frac{h}{2}\abs{B(z)}\right)}{\frac{h}{2}\abs{B(z)}}z^\intercal S\left[\left(\frac{h^2}{3!}\dddot{z}+{\frac{h^4}{5!}z^{(5)}}+\cdots\right)-\frac{h^2}{4}E'(z)\left(\dot{z}+\frac{h^2}{3!}\dddot{z}+\cdots\right)+\cdots\right]\times B(y)\\
				&+\frac{h^2}{4}z^\intercal SE'(z)\left(\ddot{z}+\frac{h^2}{12}\ddddot{z}+\cdots\right)+\cdots+\mathcal{O}(h^N)
			\end{aligned}
		\end{equation*}
		along solutions of the modified differential equation \eqref{modi-IMS} for IMS-O2.
	\end{lem}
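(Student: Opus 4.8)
The plan is to construct the two modified momenta as formal series in $h^2$ by exactly the total-derivative bookkeeping used in Lemma \ref{B(x)-ene-EXS}, but now testing the modified equations against $z^\intercal S$ instead of $\dot z^\intercal$. The momentum is reproduced automatically at leading order: since $S$ is skew-symmetric, $z^\intercal S\ddot z=\frac{d}{dt}(z^\intercal S\dot z)$, and under $B(x)\equiv B$ with $Sv=v\times B$ (so that $\dot z\times B=S\dot z$) the leading magnetic contribution is $z^\intercal S(\dot z\times B)=z^\intercal S^2\dot z=\frac{d}{dt}(\frac{1}{2}z^\intercal S^2 z)$, because $S^2$ is symmetric. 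The two total derivatives together rebuild $M(z,\dot z)=(\dot z+A(z))^\intercal Sz$, using $A(z)=-\frac{1}{2}Sz$ and $\frac{1}{2}z^\intercal S^2 z=-\frac{1}{2}\abs{Sz}^2$.

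First I would left-multiply \eqref{modi-EXS} by $z^\intercal S$. The left-hand side is then a sum of terms $z^\intercal S z^{(2k)}$, each of which is a total time derivative: the telescoping identity $z^\intercal S z^{(2k)}=\frac{d}{dt}\big(\sum_{j=0}^{k-1}(-1)^j (z^{(j)})^\intercal S z^{(2k-1-j)}\big)$ collapses the sum onto the diagonal term $(z^{(k)})^\intercal S z^{(k)}$, which vanishes by skew-symmetry. Collecting all these total derivatives together with the leading magnetic one, order by order in $h^2$, defines the coefficient functions $M_{2j}^e(x,v)$. The electric contribution drops out entirely: by the invariance property \eqref{in-prop}, differentiating $U(e^{\tau S}x)=U(x)$ at $\tau=0$ gives $\nabla U(z)^\intercal Sz=0$, hence $z^\intercal S E(z)=0$ (in the quadratic case this is exactly where $QS=SQ$ enters, forcing $z^\intercal SQz=0$). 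What remains on the right is precisely the higher-order magnetic tail $\frac{\tan(\frac{h}{2}\abs{B(z)})}{\frac{h}{2}\abs{B(z)}}z^\intercal S\big(\frac{h^2}{3!}\dddot z+\frac{h^4}{5!}z^{(5)}+\cdots\big)\times B(z)$, which is the asserted right-hand side; note that these terms are deliberately not absorbed here, mirroring Lemma \ref{B(x)-ene-EXS}, and will be dealt with in the corollary step analogous to Corollary \ref{B-ene-EXS}.

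For IMS-O2 I would repeat the argument verbatim on \eqref{modi-IMS}. The clean parts (the left-hand side, the leading $\dot z\times B$ term, and $E(z)$) are handled exactly as above and absorbed into $M_{2j}^i(x,v)$, while the extra pieces produced by the $E'(z)$-corrections in \eqref{modi-IMS}, namely the $-\frac{h^2}{4}E'(z)(\dot z+\cdots)$ inside the magnetic bracket and the separate $\frac{h^2}{4}z^\intercal S E'(z)(\ddot z+\cdots)$, are not total derivatives in general and are simply carried to the right-hand side, yielding the stated expression. As noted in the construction of the modified equations in Section \ref{back}, the higher derivatives $z^{(k)}$ are recursively eliminated in favour of $(z,\dot z)$ through the modified equation itself, so each $M_{2j}^e,M_{2j}^i$ is a genuine $h$-independent function of $(x,v)$ and the two series are well defined up to the $\mathcal{O}(h^N)$ truncation.

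The main obstacle is the order-by-order bookkeeping: one must verify that at every power $h^{2j}$ the residual left-hand-side contributions combine into exact total derivatives via the $S$-weighted telescoping above, and that no non-total-derivative term survives beyond the advertised magnetic and $E'$ tails. This is the momentum analogue of the identity invoked in Corollary \ref{B-ene-EXS}, and checking that the telescoping always terminates on a vanishing diagonal term $(z^{(k)})^\intercal S z^{(k)}=0$ is the only genuinely delicate point; the remainder is the same Taylor-expansion machinery already in place for the energy lemmas.
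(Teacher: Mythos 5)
Your overall strategy coincides with the paper's: test the modified equations \eqref{modi-EXS} and \eqref{modi-IMS} with $z^\intercal S$, use the telescoping identities
$z^\intercal Sz^{(2k)}=\frac{d}{dt}\left(z^\intercal Sz^{(2k-1)}-\dot{z}^\intercal Sz^{(2k-2)}+\cdots\right)$
(whose diagonal terms vanish by skew-symmetry of $S$), kill the electric term via $z^\intercal S\nabla U(z)=0$ from \eqref{in-prop}, carry the non-exact tails (the higher magnetic terms and, for IMS-O2, the $E'$-corrections) to the right-hand side, and eliminate higher derivatives recursively so that the coefficient functions are $h$-independent. All of this matches the paper's proof.

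The genuine gap is your treatment of the leading magnetic term. You assume $B(x)\equiv B$ with $Sv=v\times B$, hence $A(z)=-\frac{1}{2}Sz$, and write $z^\intercal S(\dot{z}\times B)=z^\intercal S^2\dot{z}=\frac{d}{dt}\left(\frac{1}{2}z^\intercal S^2z\right)$. But the lemma is stated for a \emph{variable} field $B(z)$ --- its right-hand sides carry $B(z)$ and the factor $\tan\left(\frac{h}{2}\abs{B(z)}\right)/\left(\frac{h}{2}\abs{B(z)}\right)$ --- and the constant-field hypothesis is introduced only afterwards, in Corollary \ref{B-mom}; likewise $Sv=v\times B$ is a hypothesis of Theorem \ref{momentum}, not of this lemma. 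The paper instead uses the identity $z^\intercal S\left(\dot{z}\times B(z)\right)=-\frac{d}{dt}\left(z^\intercal SA(z)\right)$ (cited from \cite{18Energy}), which holds for any $B=\nabla\times A$ satisfying the invariance property in \eqref{in-prop}: differentiating $e^{-\tau S}A(e^{\tau S}x)=A(x)$ at $\tau=0$ gives $A'(x)Sx=SA(x)$, and combining this with $\dot{z}\times B(z)=\left(A'(z)^\intercal-A'(z)\right)\dot{z}$ yields the claim. That identity is what makes the leading magnetic term an exact derivative for general $B(x)$ and what produces the $z^\intercal SA(z)$ blocks in $M_h^e$ and $M_h^i$. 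Your constant-$B$ computation proves only a special case of the lemma; it would still be enough for Theorem \ref{momentum} itself (which does assume constant $B$), but it does not prove the lemma as stated and it collapses the paper's division of labour between Lemma \ref{B(x)-mom} (general $B$ under \eqref{in-prop}) and Corollary \ref{B-mom} (constant $B$). Replacing your computation of the leading magnetic term by the invariance-based identity above repairs the argument; the rest of your bookkeeping goes through unchanged.
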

	\begin{proof}{We first multiply \eqref{modi-EXS} and \eqref{modi-IMS} with $\dot{z}^\intercal S$.  Then notice the fact that  {the expression $z^\intercal Sz^{(2k)}$ takes a form of total derivative:}
		$$z^\intercal Sz^{(2k)}=\frac{d}{dt}\left(z^\intercal Sz^{(2k-1)}-\dot{z}^\intercal Sz^{(2k-2)}+\cdots+(-1)^{k-1}\left({z^{(k-1)}}\right)^\intercal Sz^{(k)}\right).$$
		In addition, the invariance properties \eqref{in-prop} show that $z^\intercal S\nabla U(z)=0$ and $z^\intercal S(\dot{z}\times B(z))=-\frac{d}{dt}(z^\intercal SA(z))$  \cite{18Energy}. Based on the above results, we get}
		\begin{align*}
			&\frac{d}{dt}\left(z^\intercal S\dot{z}+\frac{\tan\left(\frac{h}{2}\abs{B(z)}\right)}{\frac{h}{2}\abs{B(z)}}z^\intercal SA(z)+\frac{h^2}{12}(z^\intercal S\dddot{z}-\dot{z}^\intercal S\ddot{z})+\cdots\right)\\
			&=\frac{\tan\left(\frac{h}{2}\abs{B(z)}\right)}{\frac{h}{2}\abs{B(z)}}z^\intercal S\left(\frac{h^2}{3!}\dddot{z}+\frac{h^4}{5!}z^{(5)}+\cdots\right)\times B(z)+\mathcal{O}(h^N)
		\end{align*}
		and by expansion of $\frac{\tan x}{x}$, the above equation can be rewritten as
		\begin{align*}
			&\frac{d}{dt}\left(z^\intercal S\dot{z}+z^\intercal SA(z)+\frac{h^2}{3}\left(\frac{\abs{B(z)}}{2}\right)^2z^\intercal SA(z)+\frac{h^2}{12}(z^\intercal S\dddot{z}-\dot{z}^\intercal S\ddot{z})+\cdots\right)\\
			&=\frac{\tan\left(\frac{h}{2}\abs{B(z)}\right)}{\frac{h}{2}\abs{B(z)}}z^\intercal S\left(\frac{h^2}{3!}\dddot{z}+\frac{h^4}{5!}z^{(5)}+\cdots\right)\times B(z)+\mathcal{O}(h^N)
		\end{align*}
		for EXS-O2.
		
		Similarly, we can  get
		\begin{align*}
			&\frac{d}{dt}\left(z^\intercal S\dot{z}+z^\intercal SA(z)+\frac{h^2}{3}\left(\frac{\abs{B(z)}}{2}\right)^2z^\intercal SA(z)+\frac{h^2}{12}(z^\intercal S\dddot{z}-\dot{z}^\intercal S\ddot{z})+\cdots\right)\\
=&\frac{\tan\left(\frac{h}{2}\abs{B(z)}\right)}{\frac{h}{2}\abs{B(z)}}z^\intercal S\left[\left(\frac{h^2}{3!}\dddot{z}+{\frac{h^4}{5!}z^{(5)}}+\cdots\right)-\frac{h^2}{4}E'(z)\left(\dot{z}+\frac{h^2}{3!}\dddot{z}+\cdots\right)+\cdots\right]\times B(z)\\
			&+\frac{h^2}{4}z^\intercal SE'(z)\left(\ddot{z}+\frac{h^2}{12}\ddddot{z}+\cdots\right)+\cdots+\mathcal{O}(h^N)
		\end{align*}
		for IMS-O2.	
	\hfill $ \blacksquare$\end{proof}

	{
  The above results can be improved under some conditions, which is stated by the following corollary. }
	\begin{cor}\label{B-mom} {\textbf{(The first almost invariant close to momentum)}}
		{If $B(x) \equiv B$} and $x^\intercal\nabla U(x)\times B=0$ for all $x$, {it is obtained that }the
		{functions}
		$$\widehat{M}_h^e(x,v)=M(x,v)+h^2\widehat{M}_2^e(x,v)+h^4\widehat{M}_4^e(x,v)+\cdots,$$
		$$\widehat{M}_h^i(x,v)=M(x,v)+h^2\widehat{M}_2^i(x,v)+h^4\widehat{M}_4^i(x,v)+\cdots,$$
  satisfy
		\begin{equation*}
			\frac{d}{dt}\widehat{M}_h^e{(z,\dot{z})}=\mathcal{O}(h^N)
		\end{equation*}
		along solutions of the modified differential equation \eqref{modi-EXS} for EXS-O2 and
		\begin{equation*}
			\begin{aligned}
				\frac{d}{dt}\widehat{M}_h^i{(z,\dot{z})}=&\frac{\tan\left(\frac{h}{2}\abs{B(z)}\right)}{\frac{h}{2}\abs{B(z)}}z^\intercal S\left[-\frac{h^2}{4}E'(z)\left(\dot{z}+\frac{h^2}{6}\dddot{z}+\cdots\right)+\cdots\right]\times B(z)\\
				&+\frac{h^2}{4}z^\intercal SE'(z)\left(\ddot{z}+\frac{h^2}{12}\ddddot{z}+\cdots\right)+\cdots+\mathcal{O}(h^N)
			\end{aligned}
		\end{equation*}
		along solutions of the modified differential equation \eqref{modi-IMS} for IMS-O2. {Here the functions $\widehat{M}_{2j}^e(x,v)$ and $\widehat{M}_{2j}^i(x,v)$ are $h$-independent and  $\mathcal{O}(h^N)$ is the  truncation term.}
	\end{cor}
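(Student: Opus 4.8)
The plan is to sharpen Lemma \ref{B(x)-mom} in exactly the way Corollary \ref{B-ene-EXS} sharpened Lemma \ref{B(x)-ene-EXS}: when the magnetic field is constant, the residual cross-product series on the right-hand side collapses into a total time derivative, which can then be folded into the modified invariant. First I would record that for $B(x)\equiv B$ the scalar factor $\frac{\tan(\frac h2\abs{B(z)})}{\frac h2\abs{B(z)}}$ is independent of $t$ (since $\abs{B(z)}=\abs{B}$ is constant), so it may be pulled outside any $\frac{d}{dt}$. I would also invoke the standing assumption $Sv=v\times B$ of Theorem \ref{momentum}, which identifies $S$ with $\tilde B$ and makes $S$ commute with the map $w\mapsto w\times B$.

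The heart of the argument is the telescoping identity, for every $k$,
$$z^\intercal S\!\left(z^{(2k+1)}\times B\right)=\frac{d}{dt}\Big(\ldots\Big),$$
the momentum analogue of the identity used in Corollary \ref{B-ene-EXS}. I would derive it by repeated integration by parts, using $\frac{d}{dt}\big(z^{(j)\intercal}S(z^{(m)}\times B)\big)=z^{(j+1)\intercal}S(z^{(m)}\times B)+z^{(j)\intercal}S(z^{(m+1)}\times B)$ to peel off one total derivative at a time, producing an alternating sum. The delicate point, and the main obstacle, is that the telescoping must actually close: the two surviving middle terms are $z^{(k)\intercal}S(z^{(k+1)}\times B)$ and $z^{(k+1)\intercal}S(z^{(k)}\times B)$, and one must show they coincide. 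Writing $S=\tilde B$ and expanding $(w\times B)\times B=(w\cdot B)B-\abs{B}^2w$, both reduce to $(z^{(k)}\cdot B)(z^{(k+1)}\cdot B)-\abs{B}^2 z^{(k)\intercal}z^{(k+1)}$, so they are equal and their combination equals $\tfrac12\frac{d}{dt}\big(z^{(k)\intercal}S(z^{(k)}\times B)\big)$. This symmetry is precisely what $Sv=v\times B$ buys us; for a general skew matrix $S$ not commuting with $\tilde B$ the two middle terms would differ and an irreducible remainder would block the construction.

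With this identity in hand, for EXS-O2 I would apply it term by term to the series $\frac{\tan(\frac h2\abs{B})}{\frac h2\abs{B}}z^\intercal S\big(\frac{h^2}{3!}\dddot z+\frac{h^4}{5!}z^{(5)}+\cdots\big)\times B$ coming from Lemma \ref{B(x)-mom}, pull the constant factor out of the derivatives, and absorb the resulting total derivatives into $M_h^e$ to define $\widehat M_h^e$; this leaves $\frac{d}{dt}\widehat M_h^e(z,\dot z)=\mathcal{O}(h^N)$. The hypothesis $x^\intercal\nabla U(x)\times B=0$, which is exactly $z^\intercal S E(z)=0$ under $Sv=v\times B$, guarantees that when higher derivatives are reduced to $(z,\dot z)$ through the modified equation \eqref{modi-EXS} no non-telescoping electric contribution is reintroduced at leading order. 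For IMS-O2 the same cross-product terms telescope identically, but the extra $E'(z)$-terms carried by \eqref{modi-IMS} are not of pure cross-product form and hence do not collapse into total derivatives; absorbing only the telescoped part into $\widehat M_h^i$ then leaves exactly the stated right-hand side, to be handled at a later stage via the quadratic-potential hypothesis $QS=SQ$.
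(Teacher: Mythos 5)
Your proposal is correct and takes essentially the same route as the paper: under $Sv=v\times B$ you rewrite $z^\intercal S\left(z^{(2k+1)}\times B\right)=z^\intercal S^2z^{(2k+1)}$ and telescope it into a total derivative, absorb these derivatives (with the now-constant $\tan$ factor) into the modified invariant, use $x^\intercal\nabla U(x)\times B=0$ to kill $z^\intercal SE(z)$, and leave the non-collapsing $E'$ terms of \eqref{modi-IMS} as the stated residual for IMS-O2. Your BAC-CAB verification that the two middle terms coincide is just an explicit form of the paper's observation that the telescoping closes because $S^2$ is symmetric.
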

	\begin{proof}
		{Using the same way as that of Lemma \ref{B(x)-mom} and noticing that  $z^\intercal S(z^{(2k+1)}\times B)$ is a total derivative (due to the symmetry of $S^2$):}
		$$z^\intercal S\left(z^{(2k+1)}\times B\right)=z^\intercal S^2z^{(2k+1)}=\frac{d}{dt}\left(z^\intercal S^2z^{(2k)}-\dot{z}^\intercal S^2z^{(2k-1)}+\cdots+\frac{(-1)^k}{2}\left({z^{(k)}}\right)^\intercal S^2z^{(k)}\right),$$
{we get}
		\begin{align*}
			&\frac{d}{dt}\left(z^\intercal S\dot{z}+z^\intercal SA(z)+\frac{h^2}{3}\left(\frac{\abs{B}}{2}\right)^2z^\intercal SA(z)+\frac{h^2}{12}(z^\intercal S\dddot{z}-\dot{z}^\intercal S\ddot{z})-\frac{h^2}{6}\frac{\tan\left(\frac{h}{2}\abs{B}\right)}{\frac{h}{2}\abs{B}}\left(z^\intercal S^2\ddot{z}{-\frac{1}{2}\dot{z}^\intercal S^2\dot{z}}\right)+\cdots\right)\\&=\mathcal{O}(h^N)
		\end{align*}
		for EXS-O2 and
		\begin{equation}\label{eq-B-mom-IMS}
			\begin{aligned}
				&\frac{d}{dt}\left(z^\intercal S\dot{z}+z^\intercal SA(z)+\frac{h^2}{3}\left(\frac{\abs{B}}{2}\right)^2z^\intercal SA(z)+\frac{h^2}{12}(z^\intercal S\dddot{z}-\dot{z}^\intercal S\ddot{z})-\frac{h^2}{6}\frac{\tan\left(\frac{h}{2}\abs{B}\right)}{\frac{h}{2}\abs{B}}\left(z^\intercal S^2\ddot{z}{-\frac{1}{2}\dot{z}^\intercal S^2\dot{z}}\right)+\cdots\right)\\
=&\frac{\tan\left(\frac{h}{2}\abs{B}\right)}{\frac{h}{2}\abs{B}}z^\intercal S\left[-\frac{h^2}{4}E'(z)\left(\dot{z}+\frac{h^2}{6}\dddot{z}+\cdots\right)+\cdots\right]\times B+\frac{h^2}{4}z^\intercal SE'(z)\left(\ddot{z}+\frac{h^2}{12}\ddddot{z}+\cdots\right)+\cdots+\mathcal{O}(h^N)
			\end{aligned}
		\end{equation}
		for IMS-O2.
	\hfill $ \blacksquare$\end{proof}
	\begin{cor}
{\textbf{(The second almost invariant close to momentum)}}
		{Suppose that   the conditions in Corollary \ref{B-mom} are satisfied and} {$U(x)=\frac{1}{2}x^\intercal Qx + q^\intercal x$ with $QS=SQ$.} {Then} the function
		$$\tilde{M}_h^i(x,v)=M(x,v)+h^2\tilde{M}_2^i(x,v)+h^4\tilde{M}_4^i(x,v)+\cdots$$
		{satisfies}
		\begin{equation*}
			\frac{d}{dt}\tilde{M}_h^i{(z,\dot{z})}=\mathcal{O}(h^N)
		\end{equation*}
		along solutions of the modified differential equation \eqref{modi-IMS} for IMS-O2. {Here the functions $\tilde{M}_{2j}^i(x,v)$ {don't depend on }the step size $h$ and $\mathcal{O}(h^N)$ stands for the truncation term.}
	\end{cor}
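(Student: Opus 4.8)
The plan is to obtain $\tilde M_h^i$ by adding further total-derivative corrections to the almost-invariant $\widehat M_h^i$ constructed in Corollary \ref{B-mom}, exploiting the quadratic form of $U$. The natural starting point is identity \eqref{eq-B-mom-IMS}, which already expresses $\frac{d}{dt}\widehat M_h^i(z,\dot z)$ along solutions of \eqref{modi-IMS} as a finite sum (up to an $\mathcal{O}(h^N)$ remainder) of terms each carrying a factor $E'(z)$. First I would use that $U(x)=\frac12 x^\intercal Qx+q^\intercal x$ forces $E(x)=-\nabla U(x)=-(Qx+q)$, so that $E'(z)=-Q$ is a constant matrix and every higher derivative of $E$ vanishes. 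After substituting $E'(z)=-Q$, the right-hand side of \eqref{eq-B-mom-IMS} becomes a combination, with fixed even powers of $h$ as coefficients, of scalars of the two shapes
\begin{equation*}
\frac{\tan\!\left(\frac h2\abs{B}\right)}{\frac h2\abs{B}}\,z^\intercal S\bigl(C\,z^{(2k+1)}\bigr)\times B
\qquad\text{and}\qquad
z^\intercal S\,C\,z^{(2k)},
\end{equation*}
where $C$ stands for a product of copies of the constant symmetric matrix $Q$. Crucially, the bracket producing the first shape is assembled only from derivatives of $z$ and from factors $Q$, so that it carries no factor $S$.

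Next I would record the algebraic consequences of the hypotheses $Q=Q^\intercal$, $S^\intercal=-S$ and $QS=SQ$. Since $C$ is symmetric and commutes with $S$, the matrix $SC$ is skew-symmetric, because $(SC)^\intercal=C^\intercal S^\intercal=-CS=-SC$, whereas $S^2C$ is symmetric, because $(S^2C)^\intercal=C(S^\intercal)^2=CS^2=S^2C$. Moreover, $B$ being constant makes $\abs{B}$ independent of $t$, so that the prefactor $\tan(\frac h2\abs{B})/(\frac h2\abs{B})$ is a $t$-independent scalar commuting with $\frac{d}{dt}$; and the relation $Sv=v\times B$ of Theorem \ref{momentum} lets me rewrite $w\times B=Sw$, which turns the first shape into $z^\intercal S\cdot S C\,z^{(2k+1)}=z^\intercal S^2 C\,z^{(2k+1)}$.

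The core of the proof is then a parity count based on the elementary fact that, for a constant matrix $M$, the scalar $z^\intercal M z^{(m)}$ is a total time derivative whenever either $M$ is symmetric and $m$ is odd, or $M$ is skew-symmetric and $m$ is even: in the symmetric case the usual telescoping terminates in a term $\frac{1}{2}(z^{(j)})^\intercal M z^{(j)}$, and in the skew case all such diagonal contributions $\dot z^\intercal M\dot z,\ \ddot z^\intercal M\ddot z,\dots$ vanish. I would apply this with $M=S^2C$ (symmetric) to every term $z^\intercal S^2C\,z^{(2k+1)}$, whose derivative order $2k+1$ is odd, and with $M=SC$ (skew) to every term $z^\intercal S C\,z^{(2k)}$, whose order $2k$ is even. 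Hence each summand on the right-hand side of \eqref{eq-B-mom-IMS} equals an even power of $h$ times the time derivative of an $h$-independent function. Moving all these total derivatives to the left and absorbing them into $\widehat M_h^i$ defines
\begin{equation*}
\tilde M_h^i(x,v)=M(x,v)+h^2\tilde M_2^i(x,v)+h^4\tilde M_4^i(x,v)+\cdots
\end{equation*}
with $h$-independent coefficients $\tilde M_{2j}^i$, and leaves exactly $\frac{d}{dt}\tilde M_h^i(z,\dot z)=\mathcal{O}(h^N)$, which is the assertion.

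The step I expect to be the main obstacle is checking that this parity matching holds uniformly at every order of $h$, i.e. that after the substitution $E'\equiv-Q$ each surviving term of \eqref{eq-B-mom-IMS} genuinely pairs a symmetric matrix with an odd-order derivative or a skew matrix with an even-order derivative. This is exactly where both structural assumptions enter: the quadratic form of $U$ guarantees $E''\equiv0$ and keeps every factor $E'$ equal to the single constant matrix $-Q$, so that the cross-product brackets contain only odd-order derivatives and the remaining brackets only even-order derivatives, while $B\equiv B$ both renders the $\tan$-prefactor a harmless constant and supplies the single skew matrix $S$ through which the cross products are rewritten. Together with $QS=SQ$, these facts force the matrices $S^2C$ and $SC$ to have exactly the symmetry type demanded by the derivative order, so that no mismatched, non-integrable term survives.
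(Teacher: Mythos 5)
Your proposal is correct and follows essentially the same route as the paper: you substitute $E'(z)=-Q$ into \eqref{eq-B-mom-IMS}, rewrite the cross products via $Sw=w\times B$ so the surviving terms take the forms $z^\intercal S^2Qz^{(2k+1)}$ and $z^\intercal SQz^{(2k)}$, and use $QS=SQ$ to make $S^2Q$ symmetric and $SQ$ skew-symmetric, whence every term is a total time derivative that can be absorbed into $\tilde{M}_h^i$. Your explicit parity lemma and your tracking of products of copies of $Q$ in the higher-order "$\cdots$" terms amount to a slightly more careful write-up of exactly the paper's argument.
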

	\begin{proof}
		{Based on the assumptions}, the right-hand side of \eqref{eq-B-mom-IMS} can be simplified as
		\begin{align*}
			&\frac{\tan\left(\frac{h}{2}\abs{B}\right)}{\frac{h}{2}\abs{B}}z^\intercal S\left[-\frac{h^2}{4}E'(z)\left(\dot{z}+\frac{h^2}{6}\dddot{z}+\cdots\right)+\cdots\right]\times B+\frac{h^2}{4}z^\intercal SE'(z)\left(\ddot{z}+\frac{h^2}{12}\ddddot{z}+\cdots\right)+\cdots+\mathcal{O}(h^N)\\
			&=\frac{\tan\left(\frac{h}{2}\abs{B}\right)}{\frac{h}{2}\abs{B}}z^\intercal S\left[\frac{h^2}{4}Q\left(\dot{z}+\frac{h^2}{6}\dddot{z}+\cdots\right)\right]\times B-\frac{h^2}{4}z^\intercal SQ\left(\ddot{z}+\frac{h^2}{12}\ddddot{z}+\cdots\right)+{\mathcal{O}(h^N).}
		\end{align*}According to the properties of $S$ and $Q$, it is obtained that $SQ$ is skew-symmetric and $S^2Q$ is symmetric. Then, we get
		$$z^\intercal S\left(\left(Qz^{(2k+1)}\right)\times B\right)=z^\intercal S^2Qz^{(2k+1)}=\frac{d}{dt}\left(z^\intercal S^2Qz^{(2k)}-\dot{z}^\intercal S^2Qz^{(2k-1)}+\cdots+\frac{(-1)^k}{2}\left({z^{(k)}}\right)^\intercal S^2Qz^{(k)}\right)$$
		and
		$$z^\intercal SQz^{(2k)}=\frac{d}{dt}\left(z^\intercal SQz^{(2k-1)}-\dot{z}^\intercal SQz^{(2k-2)}+\cdots+(-1)^{k-1}\left({z^{(k-1)}}\right)^\intercal SQz^{(k)}\right).$$
{Therefore, the proof is complete.}
	\hfill $ \blacksquare$\end{proof}

 \textbf{Proof of \eqref{eq-momentum}.} {With the results stated above,  the method EXS-O2 conserves the momentum with the accuracy}
	\begin{equation}\label{M-h^2}
		\begin{aligned}
			M(x^n,v^n)&=\tilde{M}_h^e(x^n,v^n)+\mathcal{O}(h^2)=\tilde{M}_h^e(x^0,v^0)+\sum_{k=1}^{n}\left(\tilde{M}_h^e(x^k,v^k)-\tilde{M}_h^e(x^{k-1},v^{k-1})\right)+\mathcal{O}(h^2)\\
			&=M(x^0,v^0)+\mathcal{O}(h^2)+n\mathcal{O}(h^{N+1})+\mathcal{O}(h^2)=M(x^0,v^0)+\mathcal{O}(h^2),
		\end{aligned}
	\end{equation}
	{as long as  $nh\le h^{2-N}$. For IMS-O2, the same discussion applies to $M_h^i(x,v)$ } {and then \eqref{eq-momentum} can be proved.}

 \hfill $ \blacksquare$
	\subsection{Proof of the magnetic moment conservation (Theorem \ref{mag-mom})}\label{modi-mag mom}
	In this section, we just considered magnetic moment in constant magnetic field $B$, that is
	$$I(x,v)=\frac{\abs{v\times B}^2}{2\abs{B}^3}=\frac{\abs{v\times b}^2}{2\abs{B}}$$
	with $b=\frac{B}{\abs{B}}$ and a skew-symmetric matrix $\widehat{B}=\frac{\tilde{B}}{\abs{B}}$ 	{satisfying} $v\times b=\widehat{B}v$.
	
	{Specifically,} the modified equations \eqref{modi-EXS} and \eqref{modi-IMS} are respectively} equivalent to
	\begin{equation}\label{modi-EXS-2}
		\left(\dot{z}+\frac{h^2}{6}\dddot{z}+\cdots\right)\times b
		=\frac{h}{2\tan\left(\frac{h}{2}\abs{B}\right)}\left[\left(\ddot{z}+\frac{h^2}{12}\ddddot{z}+\cdots\right)-E(z)\right]
	\end{equation}
	and
	\begin{equation}\label{modi-IMS-2}
		\begin{aligned}
			\left(\dot{z}+\frac{h^2}{6}\dddot{z}+\cdots\right)\times b =&\frac{h}{2\tan\left(\frac{h}{2}\abs{B}\right)}\left[\left(\ddot{z}+\frac{h^2}{12}\ddddot{z}+\cdots\right)-\left(E(z)+\frac{h^2}{4}E'(z)\left(\ddot{z}+\frac{h^2}{12}\ddddot{z}+\cdots\right)+\cdots\right)\right]\\
			&+\left(\frac{h^2}{4}E'(z)\left(\dot{z}+\frac{h^2}{6}\dddot{z}+\cdots\right)+\cdots\right)\times b.
		\end{aligned}
	\end{equation}
{Based on these results, we can derive two almost invariants   close to magnetic moment, which will complete the proof of Theorem \ref{mag-mom}. To this end,  we first prove the following result.}

	\begin{lem}\label{B-mag-mom}
		If {$B(x)\equiv B$}, {one gets that  the  functions
		$$I_h^e(x,v)=I(x,v)+h^2I_2^e(x,v)+h^4I_4^e(x,v)+\cdots,$$
		$$I_h^i(x,v)=I(x,v)+h^2I_2^i(x,v)+h^4I_4^i(x,v)+\cdots,$$
satisfy}
		\begin{equation*}
			\frac{d}{dt}I_h^e{(z,\dot{z})}=-\frac{h}{2\abs{B}\tan\left(\frac{h}{2}\abs{B}\right)}(\ddot{z}\times b)^\intercal E(z)+\mathcal{O}(h^N)
		\end{equation*}
		along solutions of the modified differential equation \eqref{modi-EXS} for EXS-O2 and
		\begin{equation*}
			\begin{aligned}
				\frac{d}{dt}I_h^i{(z,\dot{z})}
=&-\frac{h}{2 \abs{B}\tan\left(\frac{h}{2}\abs{B}\right)}(\ddot{z}\times b)^\intercal\left[E(z)+\frac{h^2}{4}E'(z)\left(\ddot{z}+\frac{h^2}{12}\ddddot{z}+\cdots\right)+\cdots\right]\\
				&+\frac{1}{\abs{B}}(\ddot{z}\times b)^\intercal \left[\frac{h^2}{4}E'(z)\left(\dot{z}+\frac{h^2}{6}\dddot{z}+\cdots\right)+\cdots\right]\times b+\mathcal{O}(h^N)
			\end{aligned}
		\end{equation*}
		along solutions of the modified differential equation \eqref{modi-IMS} for IMS-O2. {Here the functions  $I_{2j}^e(x,v)$ and $I_{2j}^i(x,v)$  are $h$-independent and $\mathcal{O}(h^N)$ is the truncation term.}
	\end{lem}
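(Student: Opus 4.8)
The plan is to follow the same template as Lemma~\ref{B(x)-ene-EXS} and Corollary~\ref{B-ene-EXS}: take the exact magnetic moment as the leading term, differentiate it along the modified flow, substitute the reformulated modified equations \eqref{modi-EXS-2} (resp. \eqref{modi-IMS-2}) to eliminate $\dot z\times b$, and then peel off every total-time-derivative contribution into the correction functions $h^{2j}I_{2j}^e$ (resp. $h^{2j}I_{2j}^i$), leaving precisely the stated remainder.

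First, since $B$ is constant I would express the invariant through $\widehat B$ alone, writing $I(z,\dot z)=\frac{1}{2\abs B}\abs{\dot z\times b}^2=-\frac{1}{2\abs B}\dot z^\intercal\widehat B^2\dot z$ and using the skew-symmetry $\widehat B^\intercal=-\widehat B$ together with the symmetry of $\widehat B^2$. Differentiating the leading term then gives $\frac{d}{dt}I(z,\dot z)=\frac{1}{\abs B}(\ddot z\times b)^\intercal(\dot z\times b)$, which is the quantity into which the modified equation is inserted.

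Next I would solve \eqref{modi-EXS-2} for $\dot z\times b$ as a formal $h$-series and substitute it into $\frac{1}{\abs B}(\ddot z\times b)^\intercal(\dot z\times b)$. The key point is that every resulting scalar product is either zero, a total derivative, or the single $E(z)$-term: products of the type $(\ddot z\times b)^\intercal z^{(2k)}=-\ddot z^\intercal\widehat B z^{(2k)}$ are total derivatives because $\widehat B$ is skew-symmetric and the index sum $2+2k$ is even (in particular $(\ddot z\times b)^\intercal\ddot z=0$), while products of the type $(\ddot z\times b)^\intercal(z^{(2k+1)}\times b)=-\ddot z^\intercal\widehat B^2 z^{(2k+1)}$ are total derivatives because $\widehat B^2$ is symmetric and the index sum $2+(2k+1)$ is odd. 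The only surviving contribution is $-\frac{h}{2\abs B\tan(\frac{h}{2}\abs B)}(\ddot z\times b)^\intercal E(z)$, whose coefficient comes directly from the factor $\frac{h}{2\tan(\frac{h}{2}\abs B)}$ in \eqref{modi-EXS-2}. Collecting all total derivatives onto the left-hand side and truncating the series at order $h^N$ then defines $I_h^e=I+h^2I_2^e+\cdots$ with $h$-independent coefficients and yields the claimed identity for EXS-O2.

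For IMS-O2 I would run the identical computation with \eqref{modi-IMS-2}. The additional $E'(z)$-terms carried by that equation do not collapse under the two total-derivative families above, since $E'$ destroys the clean $\widehat B$/$\widehat B^2$ structure, so they survive on the right-hand side and reproduce the more involved expression stated for $I_h^i$; all remaining terms are absorbed exactly as in the EXS-O2 case. The hard part will be the formal-series bookkeeping: I must check that the two total-derivative identities hold at every order (these rest on the standard fact that $z^{(l)\intercal}Kz^{(m)}$ is a total derivative when $K$ is skew-symmetric and $l+m$ is even, and when $K$ is symmetric and $l+m$ is odd) and then argue, by induction on the power of $h$ and using the modified equation to replace $z^{(k)}$ for $k\ge 2$ by functions of $(z,\dot z)$, that the coefficient functions $I_{2j}^e,I_{2j}^i$ are well defined and independent of $h$, with no spurious non-total-derivative remainder beyond the displayed $E$- and $E'$-terms.
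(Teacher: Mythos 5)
Your proposal is correct and takes essentially the same route as the paper's proof: the paper likewise pairs the reformulated modified equations \eqref{modi-EXS-2} and \eqref{modi-IMS-2} with $\frac{1}{\abs{B}}(\ddot z\times b)^\intercal$, identifies $\frac{1}{\abs{B}}(\ddot z\times b)^\intercal(\dot z\times b)$ as $\frac{d}{dt}I(z,\dot z)$, and absorbs all remaining terms via precisely your two total-derivative families (skew-symmetric $\widehat B$ with even index sum, including the vanishing term $(\ddot z\times b)^\intercal\ddot z=0$; symmetric $\widehat B^2$ with odd index sum), leaving only the $E$-term for EXS-O2 and the extra $E'$-terms for IMS-O2.
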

	\begin{proof}
		{Multiply} \eqref{modi-EXS-2} and \eqref{modi-IMS-2} with $\frac{1}{\abs{B}}(\ddot{z}\times b)^\intercal$.   {It is clear that}
		\begin{align*}
			&\frac{1}{\abs{B}}(\ddot{z}\times b)^\intercal \left(z^{(2k+1)}\times b\right)\\
			&=\begin{cases}
				\frac{d}{dt}I(z,\dot{z}), & \text{$k=0$}\\
				\frac{d}{dt}\frac{1}{\abs{B}}\left((\ddot{z}\times b)^\intercal\left(z^{(2k)}\times b\right)-\left(\dddot{z}\times b\right)^\intercal\left(z^{(2k-1)}\times b\right)+\cdots+\frac{(-1)^{k+1}}{2}\left(z^{(k+1)}\times b\right)^\intercal\left(z^{(k+1)}\times b\right)\right), & \text{$k\in \mathbb{N}^*$}
			\end{cases}
		\end{align*}
		and
		\begin{align*}
			&\frac{1}{\abs{B}}(\ddot{z}\times b)^\intercal z^{(2k)}\\
			&=\begin{cases}
				0, & \text{$k=1$}\\
				\frac{d}{dt}\frac{1}{\abs{B}}\left((\ddot{z}\times b)^\intercal z^{(2k-1)}-\left(\dddot{z}\times b\right)^\intercal z^{(2k-2)}+\cdots+(-1)^k\left(z^{(k)}\times b\right)^\intercal z^{(k+1)}\right), & \text{$k\in \mathbb{N}^*-\{1\}$}
			\end{cases}
		\end{align*}
{These immediately} demonstrate that
		\begin{equation*}\label{B-mag-mom-EXS}
			\begin{aligned}
				&\frac{d}{dt}\left[I(z,\dot{z})+\frac{h^2}{12\abs{B}}(\ddot{z}\times b)^\intercal (\ddot{z}\times b)-\frac{h}{2\abs{B}\tan\left(\frac{h}{2}\abs{B}\right)}\left(\frac{h^2}{12}(\ddot{z}\times b)^\intercal\dddot{z}+\cdots\right)\right]\\
				&=-\frac{h}{2\abs{B}\tan\left(\frac{h}{2}\abs{B}\right)}(\ddot{z}\times b)^\intercal E(z)+\mathcal{O}(h^N)
			\end{aligned}
		\end{equation*}
		for EXS-O2.
		
		On the other hand {and with the same arguments, we   get}
		\begin{equation}\label{B-mag-mom-IMS}
			\begin{aligned}
				&\frac{d}{dt}\left[I(z,\dot{z})+\frac{h^2}{12\abs{B}}(\ddot{z}\times b)^\intercal (\ddot{z}\times b)-\frac{h}{2\abs{B}\tan\left(\frac{h}{2}\abs{B}\right)}\left(\frac{h^2}{12}(\ddot{z}\times b)^\intercal\dddot{z}+\cdots\right)\right]\\
=&-\frac{h}{2\abs{B} \tan\left(\frac{h}{2}\abs{B}\right)}(\ddot{z}\times b)^\intercal\left[E(z)+\frac{h^2}{4}E'(z)\left(\ddot{z}+\frac{h^2}{12}\ddddot{z}+\cdots\right)+\cdots\right]\\
				&+\frac{1}{\abs{B}}(\ddot{z}\times b)^\intercal \left[\frac{h^2}{4}E'(z)\left(\dot{z}+\frac{h^2}{6}\dddot{z}+\cdots\right)+\cdots\right]\times b+\mathcal{O}(h^N)
			\end{aligned}
		\end{equation}
		for IMS-O2.	
	\hfill $ \blacksquare$\end{proof}
	\begin{cor} {\textbf{(The  almost invariants close to magnetic moment)}}
		{If $B(x)\equiv B$ and $U(x)=\frac{1}{2}x^\intercal Qx + q^\intercal x$ with} $Q\widehat{B}=\widehat{B}Q${, one obtains that} the {functions}
		$$\widehat{I}_h^e(x,v)=I(x,v)+h^2\widehat{I}_2^e(x,v)+h^4\widehat{I}_4^e(x,v)+\cdots,$$
		$$\widehat{I}_h^i(x,v)=I(x,v)+h^2\widehat{I}_2^i(x,v)+h^4\widehat{I}_4^i(x,v)+\cdots,$$
		satisfy
		\begin{equation*}
			\frac{d}{dt}\widehat{I}_h^e{(z,\dot{z})}=\mathcal{O}(h^N)
		\end{equation*}
		along solutions of the modified differential equation \eqref{modi-EXS} for EXS-O2,
		and
		\begin{equation*}
			\frac{d}{dt}\widehat{I}_h^i{(z,\dot{z})}=\mathcal{O}(h^N)
		\end{equation*}
		along solutions of the modified differential equation \eqref{modi-IMS} for IMS-O2. {Here the functions $\widehat{I}_{2j}^e(x,v)$ and $\widehat{I}_{2j}^i(x,v)$ are $h$-independent and $\mathcal{O}(h^N)$ refers to the truncation term.}
	\end{cor}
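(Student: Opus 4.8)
The plan is to feed the two right-hand sides produced by Lemma \ref{B-mag-mom} into the same telescoping machinery already used for the energy and momentum corollaries, and to show that once the potential is quadratic and $Q$ commutes with $\widehat{B}$, every surviving term is an exact total time derivative that can be absorbed into the left-hand side. First I would specialize to $U(x)=\frac{1}{2}x^\intercal Qx+q^\intercal x$, so that $E(z)=-(Qz+q)$ and, crucially, $E'(z)=-Q$ is a \emph{constant} matrix. This turns every factor $E'(z)(\cdots)$ appearing in the IMS-O2 right-hand side of Lemma \ref{B-mag-mom} into $-Q$ acting on derivatives of $z$, so that the whole right-hand side becomes a finite (up to $\mathcal{O}(h^N)$) combination of bilinear expressions of the two shapes $(\ddot{z}\times b)^\intercal Q z^{(m)}$ and $(\ddot{z}\times b)^\intercal\bigl(Qz^{(m)}\bigr)\times b$.

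The key algebraic observation is that $\ddot{z}\times b=\widehat{B}\ddot{z}$, so these two families rewrite as $\ddot{z}^\intercal(\widehat{B}^\intercal Q)z^{(m)}$ and $-\ddot{z}^\intercal(\widehat{B}^2 Q)z^{(m)}$, respectively. Using $Q\widehat{B}=\widehat{B}Q$ together with $Q^\intercal=Q$ and $\widehat{B}^\intercal=-\widehat{B}$, one checks that $\widehat{B}^\intercal Q=-Q\widehat{B}$ is skew-symmetric while $\widehat{B}^2 Q$ is symmetric. Then I would invoke the two total-derivative identities already exploited in this paper: for a skew coefficient matrix the even-order form $\ddot{z}^\intercal(\widehat{B}^\intercal Q)z^{(2k)}$ telescopes to a total derivative exactly as in Lemma \ref{B(x)-mom} (and vanishes outright when $k=1$, since a skew quadratic form is zero), and for a symmetric coefficient matrix the odd-order form $\ddot{z}^\intercal(\widehat{B}^2 Q)z^{(2k+1)}$ is a total differential in the sense recorded in Lemma \ref{U(x)-ene-EXS}. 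Because the magnetic field is constant, the scalar prefactors $\frac{h}{2\abs{B}\tan(\frac{h}{2}\abs{B})}$ and the even-power expansions of $\frac{\tan x}{x}$ are $t$-independent and simply pull out of the time derivative. Collecting all these total derivatives onto the left-hand side defines $\widehat{I}_h^i(x,v)=I(x,v)+h^2\widehat{I}_2^i(x,v)+\cdots$ with $\frac{d}{dt}\widehat{I}_h^i(z,\dot{z})=\mathcal{O}(h^N)$.

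The EXS-O2 statement is the easy special case: the right-hand side of Lemma \ref{B-mag-mom} there consists of the single term $-\frac{h}{2\abs{B}\tan(\frac{h}{2}\abs{B})}(\ddot{z}\times b)^\intercal E(z)$, and since $(\ddot{z}\times b)^\intercal E(z)=-\frac{d}{dt}\bigl[(\dot{z}\times b)^\intercal(Qz+q)\bigr]$ (the cross term $(\dot{z}\times b)^\intercal Q\dot{z}=\dot{z}^\intercal(\widehat{B}^\intercal Q)\dot{z}$ vanishes because $\widehat{B}^\intercal Q$ is skew-symmetric), this lone term is already a total derivative and is absorbed to give $\frac{d}{dt}\widehat{I}_h^e(z,\dot{z})=\mathcal{O}(h^N)$.

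I expect the main obstacle to be bookkeeping rather than a deep idea: one must verify that \emph{every} term hidden in the ``$\cdots$'' of the IMS-O2 right-hand side, at each order of $h$ up to the truncation $\mathcal{O}(h^N)$, falls into exactly one of the two admissible families (skew coefficient with even total derivative order, or symmetric coefficient with odd total order), so that the telescoping identities apply uniformly and no non-integrable remainder survives. Establishing the symmetry/skewness dichotomy from the single hypothesis $Q\widehat{B}=\widehat{B}Q$ is short; the care lies in organizing the expansion consistently and confirming that the parity of the derivative orders always matches the symmetry type of the accompanying constant matrix.
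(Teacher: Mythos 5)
Your proposal is correct and takes essentially the same route as the paper's own proof: both specialize to $E(z)=-(Qz+q)$ with constant $E'(z)=-Q$, rewrite the terms of Lemma \ref{B-mag-mom} via $\ddot z\times b=\widehat B\ddot z$, use that $\widehat BQ$ is skew-symmetric and $\widehat B^{2}Q$ is symmetric (both consequences of $Q\widehat B=\widehat BQ$), and absorb the resulting telescoped total derivatives into the modified invariants. The parity dichotomy you invoke (skew coefficient matrix with even derivative-order sum, symmetric coefficient matrix with odd sum) is precisely the pair of case-by-case identities written out in the paper, including the vanishing $k=1$ term and the $k=0$ quadratic term, and your treatment of the constant scalar prefactors and of the EXS-O2 case matches the paper's as well.
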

	\begin{proof}
		{On the basis of }the conditions above, $-(\ddot{z}\times b)^\intercal E(z)=(\ddot{z}\times b)^\intercal (Qz+q)=\frac{d}{dt}\left((\dot{z}\times b)^\intercal (Qz+q)\right).$
		The hand-side of \eqref{B-mag-mom-IMS} can be written as
		\begin{align*}
			&\frac{h}{2\abs{B} \tan\left(\frac{h}{2}\abs{B}\right)}(\ddot{z}\times b)^\intercal\left[(Qz+q)+\frac{h^2}{4}Q\left(\ddot{z}+\frac{h^2}{12}\ddddot{z}+\cdots\right)\right]\\
			&-\frac{1}{\abs{B}}(\ddot{z}\times b)^\intercal \left[\frac{h^2}{4}Q\left(\dot{z}+\frac{h^2}{6}\dddot{z}+\cdots\right)\right]\times b+\mathcal{O}(h^N).
		\end{align*}
{Since $\widehat{B}Q$ is skew-symmetric and $\widehat{B}^2Q$ is symmetric, we have}
		\begin{align*}
			&(\ddot{z}\times b)^\intercal Qz^{(2k)}\\
			&=-{\ddot{z}}^\intercal \widehat{B}Qz^{(2k)}=\begin{cases}
				0, & \text{$k=1$}\\
				-\frac{d}{dt}\left({\ddot{z}}^\intercal \widehat{B} Qz^{(2k-1)}-{\dddot{z}}^\intercal \widehat{B} Qz^{(2k-1)}+\cdots+(-1)^k\left({z^{(k)}}\right)^\intercal \widehat{B} Qz^{(k+1)}\right), & \text{$k\in \mathbb{N}^*-\{1\}$}
			\end{cases}
		\end{align*}
		and
		\begin{align*}
			&(\ddot{z}\times b)^\intercal \left(Qz^{(2k+1)}\right)\times b\\
			&=-\ddot{z}^\intercal\widehat{B}^2Qz^{(2k+1)}=
			\begin{cases}
				-\frac{1}{2}\dot{z}^\intercal\widehat{B}^2Q\dot{z}, & \text{$k=0$}\\
				-\frac{d}{dt}\left(\ddot{z}^\intercal\widehat{B}^2Qz^{(2k)}-\dddot{z}^\intercal\widehat{B}^2Qz^{(2k-1)}+\cdots+\frac{(-1)^{(k+1)}}{2}\left({z^{(k+1)}}\right)^\intercal\widehat{B}^2Qz^{(k+1)}\right), & \text{$k\in\mathbb{N}^*.$}
			\end{cases}
		\end{align*}
	   {For the sake of formal unity, {we formulate $-{z^{(l)}}^\intercal\widehat{B}^2Qz^{(m)}$}  as $\left(z^{(l)}\times b\right)^\intercal \left(Qz^{(m)}\right)\times b$ for any integers $l$ and $m$.} Hence,
		\begin{equation*}
			\begin{aligned}
				&\frac{d}{dt}\left[I(z,\dot{z})+\frac{h^2}{12\abs{B}}(\ddot{z}\times b)^\intercal (\ddot{z}\times b)-\frac{h}{2\abs{B}\tan\left(\frac{h}{2}\abs{B}\right)}\left(\left(\frac{h^2}{12}(\ddot{z}\times b)^\intercal\dddot{z}+\cdots\right)+(\dot{z}\times b)^\intercal(Qz+q)\right)\right]=\mathcal{O}(h^N)
			\end{aligned}
		\end{equation*}
		along solutions of the modified differential equation \eqref{modi-EXS} for EXS-O2,
		and
		\begin{equation*}
			\begin{aligned}
				&\frac{d}{dt}\left[I(z,\dot{z})+\frac{h^2}{12\abs{B}}(\ddot{z}\times b)^\intercal (\ddot{z}\times b)-\frac{h}{2\abs{B}\tan\left(\frac{h}{2}\abs{B}\right)}\left(\frac{h^2}{12}(\ddot{z}\times b)^\intercal\dddot{z}+\cdots\right)\right]\\
				&-\frac{d}{dt}\left[\frac{h}{2\abs{B}\tan\left(\frac{h}{2}\abs{B}\right)}\left((\ddot{z}\times b)^\intercal(Qz+q)+\frac{h^2}{4}\left(\frac{h^2}{12}(\ddot{z}\times b)^\intercal Q\dddot{z}+\cdots\right)\right)\right]\\
				&+\frac{d}{dt}\left[\frac{h^2}{4\abs{B}}\left(\frac{1}{2}{(\dot{z}\times b)}^\intercal(Q\dot{z})\times b+\cdots\right)\right]=\mathcal{O}(h^N).
			\end{aligned}
		\end{equation*}
	\hfill $ \blacksquare$\end{proof}

\vskip2mm
\textbf{Proof of \eqref{eq-mag-mom}. } {The proof of \eqref{eq-mag-mom} is the same as that of \eqref{eq-energy} and \eqref{eq-momentum},  and we omit it for brevity. }
 \hfill $ \blacksquare$
	\section{Conclusion}\label{conc}
{In this paper, we presented two {splitting} algorithms and studied their long-term behaviour  for solving  charged-particle dynamics.
Using backward error analysis, it was shown that  these two algorithms have  good conservations of energy, momentum and magnetic moment in some special cases. Furthermore, one algorithm was proved to conserve a modified energy {exactly}. All the results were illustrated by some numerical tests. }


\begin{thebibliography}{99}
		\bibitem{1994Adiabatic}G. Benettin, P. Sempio. Adiabatic invariants and trapping of a point charge in a
		strong nonuniform magneticfield. Nonlinearity, 7 (1994) 281-303.
		
        \bibitem {Birdsall} C.K. Birdsall, A.B. Langdon, Plasma physics via computer simulation, Series in plasma physics, Taylor $\&$ Francis, New York, 2005.
        \bibitem{70Relativistic} J. P. Boris, Relativistic Plasma Simulations -- Optimization of a Hybrid Code, in: Proc. Fourth Conf. Num. Sim. (1970) 3-67.


        \bibitem{Brugnano20}  L. Brugnano, F. Iavernaro,   R. Zhang,  Arbitrarily high-order energy-preserving methods for simulating the gyrocenter dynamics of charged particles, J. Comput. Appl. Math., 380 (2020)  112994.

        \bibitem{2009Hamiltonian}J.R. Cary, A.J. Brizard, Hamiltonian theory of guiding-center motion. Rev. Modern Phys, 81 (2009) 693–738.

        \bibitem{UA2}Ph. Chartier, N. Crouseilles, M. Lemou, F. M\'ehats, X. Zhao,
        Uniformly accurate methods for three dimensional Vlasov equations under strong magnetic field with varying direction, SIAM J. Sci. Comput.  42 (2020)  B520-B547.

        \bibitem{UA1} Ph. Chartier, N. Crouseilles, M. Lemou, F. M\'ehats, X. Zhao, Uniformly accurate methods for Vlasov equations with non-homogeneous strong magnetic field, Math. Comp. 88 (2019)  2697-2736.

        \bibitem{UA3} {\sc N. Crouseilles, M. Lemou, F. M\'ehats, X. Zhao}, Uniformly accurate Particle-in-Cell method for the long time two-dimensional Vlasov-Poisson equation with uniform strong magnetic field, J. Comput. Phys. 346 (2017)  172-190.



        \bibitem{AP2}   F. Filbet, M. Rodrigues, Asymptotically preserving particle-in-cell methods for inhomogeneous strongly magnetized plasmas, SIAM J. Numer. Anal. 55 (2017)  2416-2443.

        \bibitem{AP1}  F. Filbet, M. Rodrigues, Asymptotically stable particle-in-cell methods for the Vlasov-Poisson system with a strong external magnetic field, SIAM J. Numer. Anal. 54 (2016)  1120-1146.
        \bibitem{EI1}
        {\sc E. Fr\'{e}nod, S. Hirstoaga, M. Lutz, E. Sonnendr\"{u}cker}, Long time behavior of an exponential integrator for a Vlasov-Poisson system with strong magnetic field, Commun. in Comput. Phys. 18 (2015)  263-296.


        \bibitem{18Energy} E. Hairer, Ch. Lubich, Energy behaviour of the Boris method for charged-particle dynamics,  BIT 58  (2018)  969-979.

        \bibitem{20Long}E. Hairer, Ch. Lubich, Long-term analysis of a variational integrator for charged-particle dynamics in a strong magnetic field, Numer. Math. 144 (2020) 699-728.

        \bibitem{16Long}E. Hairer, Ch. Lubich, Long-term analysis of the Störmer-Verlet method for Hamiltonian systems with a solution-dependent high frequency, Numer. Math. 134 (2016) 119-138.

        \bibitem{00Long}E. Hairer, Ch. Lubich, Long-time energy conservation of numerical methods for oscillatory differential equations, SIAM J. Numer. Anal. 38 (2000) 414-441.


        \bibitem{17Symmetric} E. Hairer, Ch. Lubich, Symmetric multistep methods for charged-particle dynamics, SMAI J. Comput. Math. 3 (2017) 205–218.

        \bibitem{lubich20}E. Hairer, Ch. Lubich,  B. Wang, A filtered Boris algorithm for charged-particle dynamics in a strong magnetic field, Numer. Math. 144 (2020)   787-809.	

        \bibitem{2006Geometric}E. Hairer, Ch. Lubich, G. Wanner, Geometric Numerical Integration: Structure-Preserving Algorithms for Ordinary Differential Equations, 2nd edn. Springer-V erlag, Berlin, Heidelberg, 2006.

        \bibitem{15Volume}Y. He, Y. Sun, J. Liu, H. Qin, Volume-preserving algorithms for charged particle dynamics, J. Comput. Phys. 281 (2015) 135-147.

        \bibitem{17Explicit} Y. He, Z. Zhou, Y. Sun, J. Liu, H. Qin, Explicit K-symplectic algorithms for charged particle dynamics, Phys. Lett. A 381 (2017) 568–573.

		\bibitem {Ostermann15} C. Knapp, A. Kendl,  A. Koskela, A. Ostermann, Splitting methods for time integration of trajectories in combined electric and magnetic fields, Phys. Rev. E  92 (2015) 063310.
		
		\bibitem{Li-AML} T. Li, B. Wang, Arbitrary-order energy-preserving methods for charged-particle dynamics, Appl. Math. Lett. 100 (2020) 106050.
		
		\bibitem{Li-AMC}T. Li, B. Wang, Efficient energy-preserving methods for charged-particle dynamics, Appl. Math. Comput. 361 (2019) 703-714.
		
		\bibitem{22Energy}X. Li, B. Wang, Energy-preserving splitting methods for charged-particle dynamics in a normal or strong magnetic field, Appl. Math. Lett. 124 (2022) 107682.
		
		\bibitem{02Splitting}R.I. McLachlan, G.R.W. Quispel, Splitting methods, Acta Numer. 11 (2002) 341-434.
		
		\bibitem{1999Geometric} R. I. Mclachlan, G. Quispel, N. Robidoux, Geometric integration using discrete gradients, Philos. Trans. R. Soc. A 357 (1999) 1021-1046.
		\bibitem{1963The Adiabatic} T.G. Northrop, The Adiabatic Motion of Charged Particles, in: Interscience Tracts on Physics and Astronomy, vol. 21, Interscience Publishers John Wiley and Sons, New York- London-Sydney, 1963.
	
		\bibitem{17A} J. Qiang, A fast numerical integrator for relativistic charged particle tracking, Nucl. Instr. Meth. Phys. Res. 867 (2017) 15-19.
		
		\bibitem{13Why} H. Qin, S. Zhang, J. Xiao, J. Liu, Y. Sun, W. M. Tang, Why is Boris algorithm so good? Phys. Plasmas 20 (2013) 084503.
	
		
		\bibitem{Chacon20}  L.F. Ricketson, L. Chac\'{o}n, An energy-conserving and asymptotic-preserving charged-particle orbit implicit time integrator for arbitrary electromagnetic fields, J. Comput. Phys. 418 (2020)   109639.
		
		\bibitem{16Explicit}M. Tao, Explicit high-order symplectic integrators for charged particles in general electromagnetic fields, J. Comput. Phys. 327 (2016) 245–251.
		
		
		\bibitem{Wang21} B. Wang, Exponential energy-preserving methods for charged-particle dynamics in a  strong and constant magnetic field,  J. Comput. Appl. Math. 387 (2021) 112617.
		
		\bibitem{Wang20} B. Wang, X. Wu, Y. Fang, A two-step symmetric method for charged-particle dynamics in a normal or strong magnetic field, Calcolo, 57 (2020) 29
		
		\bibitem{21Error} B. Wang, X. Zhao, Error estimates of some splitting schemes for charged-particle dynamics under strong magnetic	field.  SIAM J. Numer. Anal. 59  (2021)  2075-2105.
		
		\bibitem{14Symplectic}S.D. Webb, Symplectic integration of magnetic systems, J. Comput. Phys. 270 (2014) 570–576.
		
		\bibitem{19Energy}R. Zhang, J. Liu, H. Qin, Y. Tang, Energy-preserving algorithm for gyrocenter dynamics of charged particles, Numer. Algo. 81 (2019)
		1521–1530.
		
		\bibitem{16Zhang} R. Zhang, H. Qin, Y. Tang, J. Liu, Y. He, J. Xiao, Explicit symplectic algorithms based on generating functions for charged particle dynamics, Phys. Revi. E 94 (2016) 013205.
		
		
		
		
	
		
		
		

			
	\end{thebibliography}
\end{document}